\newtheorem{lemma}{Lemma}[section]
\newtheorem{thm}[lemma]{Theorem}
\newtheorem{rem}[lemma]{Remark}
\newtheorem{prop}[lemma]{Proposition}
\newcommand{\finedimo}{{\hfill\hbox{$\square$}\vspace{2pt}}}
\newcommand{\diff}{\,\mathrm{d}}
\newcommand\matH{{\mathbb{H}}}
\newcommand\matP{{\mathbb{P}}}
\newcommand\matR{{\mathbb{R}}}
\newcommand\matQ{{\mathbb{Q}}}
\newcommand\matZ{{\mathbb{Z}}}
\newcommand\matC{{\mathbb{C}}}
\newcommand{\vettt}[3]{\left(\begin{array}{c}#1\\#2\\#3\end{array}\right)}
\renewcommand{\hbar}{{\overline{h}}}
\newfont{\Got}{eufm10 scaled 1200}
\newcommand{\mycap} [1] {\caption{\footnotesize{#1}}}
\newcommand{\cut}{\textrm{Cut}}
\newcommand{\calT}{{\mathcal{T}}}
\newcommand{\calA}{{\mathcal{A}}}
\newcommand{\calH}{{\mathcal{H}}}
\newcommand{\calL}{{\mathcal{L}}}
\newcommand{\calP}{{\mathcal{P}}}
\newcommand{\calS}{{\mathcal{S}}}
\newcommand{\scalgen}[2]{\left\langle#1|#2\right\rangle_{(1,n)}}
\newcommand{\scal}[2]{\left\langle#1|#2\right\rangle_{(1,3)}}
\begin{document}

\title{Algorithmic construction and recognition of hyperbolic 3-manifolds, links, and graphs}

\author{Carlo~\textsc{Petronio}}

\maketitle

This survey article describes the algorithmic approaches
successfully used over the time to construct hyperbolic
structures on 3-dimensional topological ``objects'' of various types,
and to classify several classes of such objects using such structures.
Essentially, it reproduces the contents of a course given by the author at the
``Master Class on Geometry'' held in Strasbourg from
April 27 to May 2, 2009. The author warmly thanks the organizers
Norbert A'Campo, Frank Herrlich and (particularly)
Athanase Papadopoulos for having set up this excellent activity and
for having invited him to contribute to it.

\section{3-dimensional ``objects''}\label{objects:sec}

The main objects of interest in $3$-dimensional topology are
$3$-manifolds, namely topological spaces obtained by patching
together portions of Euclidean 3-space. Depending on whether
the patching is performed along continuous, differentiable, or
piecewise-linear maps, one gets the three different categories
of manifolds named TOP, DIFF, and PL, respectively. In higher dimension
these categories can differ from each other
in an essential way (for instance, one TOP manifold can
have non-diffeomorphic DIFF structures), but in dimension $3$ it has
been known for a long time
(see for instance the foundational work of Kirby and Siebenmann~\cite{kirsib}) that the three categories are equivalent
to each other. For this reason in the sequel we will use the DIFF and the PL
approaches interchangeably, the former being more suited to the discussion
of geometric structures, the latter to a combinatorial treatment.
In addition we will always view manifolds up to the natural equivalence relation in
the category in use, namely we will view two diffeomorphic or PL-equivalent manifolds
as being just one and the same object. We address the reader to
the by now classical introductions to the topic of 3-manifolds due to Hempel and to Jaco ~\cite{Hempel,Jaco}.

The most general setting of an algorithmic classification of manifolds
(or of other topological objects, as discussed below) consists of the following
ingredients:
\begin{itemize}
\item A combinatorial \emph{presentation} of the objects under consideration,
namely a way to associate a topological object to some finite set of data,
so that, given a bound on the ``complexity,'' all the relevant sets of data
can be recursively enumerated by a computer;
\item A set of \emph{moves} on the combinatorial data, by repeated applications
of which one is sure to relate to each other any two sets of data representing
the same topological object;
\item Certain \emph{invariants} of the topological objects, using which
one can (sometimes) prove one is different from another one, and perhaps also show
that they are the same (when the invariant is a complete one).
\end{itemize}

In the rest of this section we will describe some combinatorial presentations
of $3$-manifolds and of other related $3$-dimensional topological objects introduced below,
together with the corresponding moves. In the next section we will illustrate the
powerful invariants coming from the machinery of hyperbolic geometry, and in the subsequent sections
we will discuss how the combinatorial approach and the use of
the hyperbolic invariants can be (and has been) used to produce extremely
satisfactory classification results.

\paragraph{(Loose) triangulations of manifolds, and spines}
In the sequel all our manifolds will be 3-dimensional, connected, orientable, and compact
(with or without boundary). Starting from the case of a closed manifold $M$,
namely one with empty boundary, we will call (loose) \emph{triangulation}
of $M$ a realization of $M$ as the quotient of a disjoint union of standard tetrahedra
under the action of a simplicial orientation-reversing pairing of the (codimension-1) faces.
Note that a triangulation is not strictly a PL structure on $M$ according to
the original definition~\cite{RS}, because in $M$ the tetrahedra can be self-incident and
multiply incident to each other. However a loose triangulation in our sense
can be transformed into a PL structure by subdivision.
The next result (due to Matveev and to Piergallini,
see~\cite{mafo,matbook,pierg} and the references quoted therein)
describes the combinatorial approach to closed $3$-manifolds using triangulations:

\begin{thm}\label{closed:calculus}
Let $M$ be a closed orientable $3$-manifold. Then:
\begin{itemize}
\item Given $v\geq 1$ one can find triangulations of $M$ with $v$ vertices;
\item Given $v\geq 1$ and two triangulations of $M$ with $v$ vertices,
both consisting of at least two tetrahedra, one can transform them into each other
by repeated applications of the $2$-to-$3$ move shown in Fig.~\ref{triamoves:fig}-top, and its inverse;
\item One can transform any two triangulations of $M$ into each other
by repeated applications of the $2$-to-$3$ and the $1$-to-$4$ moves
shown in Fig.~\ref{triamoves:fig}, and their inverses.
\end{itemize}
\end{thm}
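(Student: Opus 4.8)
The plan is to pass to the dual special spines and reduce the statement to the calculus of Matveev--Piergallini moves on special polyhedra. First I would set up the duality between (loose) triangulations and special spines. Given a triangulation $\calT$ of the closed manifold $M$ with $v$ vertices, I would build its dual $2$-complex $P$: one \emph{true vertex} of $P$ inside each tetrahedron, one \emph{triple edge} transverse to each $2$-face, and one $2$-component dual to each edge of $\calT$. The complement $M\setminus P$ is then a disjoint union of $v$ open balls, one around each vertex of $\calT$, so that $P$ is a special spine of the manifold $M_v$ obtained by removing $v$ open balls from $M$. This correspondence is bijective and dictionary-like: tetrahedra $\leftrightarrow$ true vertices, $2$-faces $\leftrightarrow$ triple edges, edges $\leftrightarrow$ $2$-components, vertices $\leftrightarrow$ complementary balls. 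In particular ``triangulation with at least two tetrahedra'' translates into ``special spine with at least two true vertices.''

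Second I would translate the two moves. The $2$-to-$3$ move is dual to the Matveev--Piergallini ($\mathrm{MP}$) move, the local modification trading two true vertices for three (and conversely) inside a ball, which leaves the number of complementary balls --- hence $v$ --- unchanged. The $1$-to-$4$ move, which cones a tetrahedron from a new interior vertex, is dual to the \emph{bubble move}, gluing a disk onto a $2$-component so as to create one new complementary ball and thereby raise $v$ by one. After this translation the second claim becomes the assertion that any two special spines of $M_v$ having at least two true vertices are connected by $\mathrm{MP}$-moves alone, while the third claim becomes the assertion that arbitrary punctured special spines of $M$ are connected by $\mathrm{MP}$-moves and bubble moves together.

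Third, for the existence statement I would start from one triangulation of $M$ (which exists since every closed $3$-manifold is triangulable), dualize/collapse to obtain a special spine and hence a $1$-vertex triangulation, and then apply $1$-to-$4$ moves repeatedly to realize every $v\geq 1$. For the substance of the last two claims I would invoke the Matveev--Piergallini theorem for special spines: any two special spines of a compact $3$-manifold are related by a finite sequence of $\mathrm{MP}$-moves and (de)bubblings. Its proof proceeds by presenting both spines inside a common \emph{simple} spine, using Casler's uniqueness of regular neighborhoods to compare the two thickenings, and rewriting the elementary collapses and expansions connecting general simple spines as compositions of $\mathrm{MP}$-moves, with bubblings allowed to adjust the number of $2$-components and complementary regions.

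The main obstacle is exactly the passage that \emph{forbids} bubblings in the second claim: one must connect two special spines of $M_v$ carrying the same number of complementary balls without ever changing that number, i.e.\ using $\mathrm{MP}$-moves only. Controlling the number of complementary balls throughout a sequence of elementary expansions and collapses is the delicate point, and it is here that the hypothesis of at least two true vertices (two tetrahedra) enters: the minimal special spines are too rigid for the $\mathrm{MP}$-move to act, so a preliminary stabilization is required before the two spines can be matched. Checking that such a stabilization can always be performed within a fixed number of complementary balls, together with the finite case analysis of the normal forms of the singular graph near a true vertex, is where the real work lies.
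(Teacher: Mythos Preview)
Your proposal is correct and follows essentially the same route as the paper. The paper does not prove Theorem~\ref{closed:calculus} directly but, in the remark following Theorem~\ref{bounded:calculus}, reduces it to the bounded case by observing that a triangulation of the closed $M$ with $v$ vertices is exactly an ideal triangulation of $M$ minus $v$ open balls, that this punctured manifold is well-defined independently of where the balls are removed, and that the $1$-to-$4$ move is what changes $v$; the $2$-to-$3$ calculus for fixed $v$ then comes from Theorem~\ref{bounded:calculus}. Your argument is the dual formulation of this: you pass to special spines of $M_v$ and invoke the Matveev--Piergallini spine calculus (the paper's Theorem~\ref{bounded:spines:calculus}), with the bubble move playing the role of the $1$-to-$4$ move. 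Since the paper itself sets up and uses this triangulation/spine duality, the two presentations are equivalent; you simply go a bit further in sketching the internal mechanics of the Matveev--Piergallini theorem, which the paper leaves to the cited references.
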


    \begin{figure}
    \begin{center}
    \includegraphics[scale=.45]{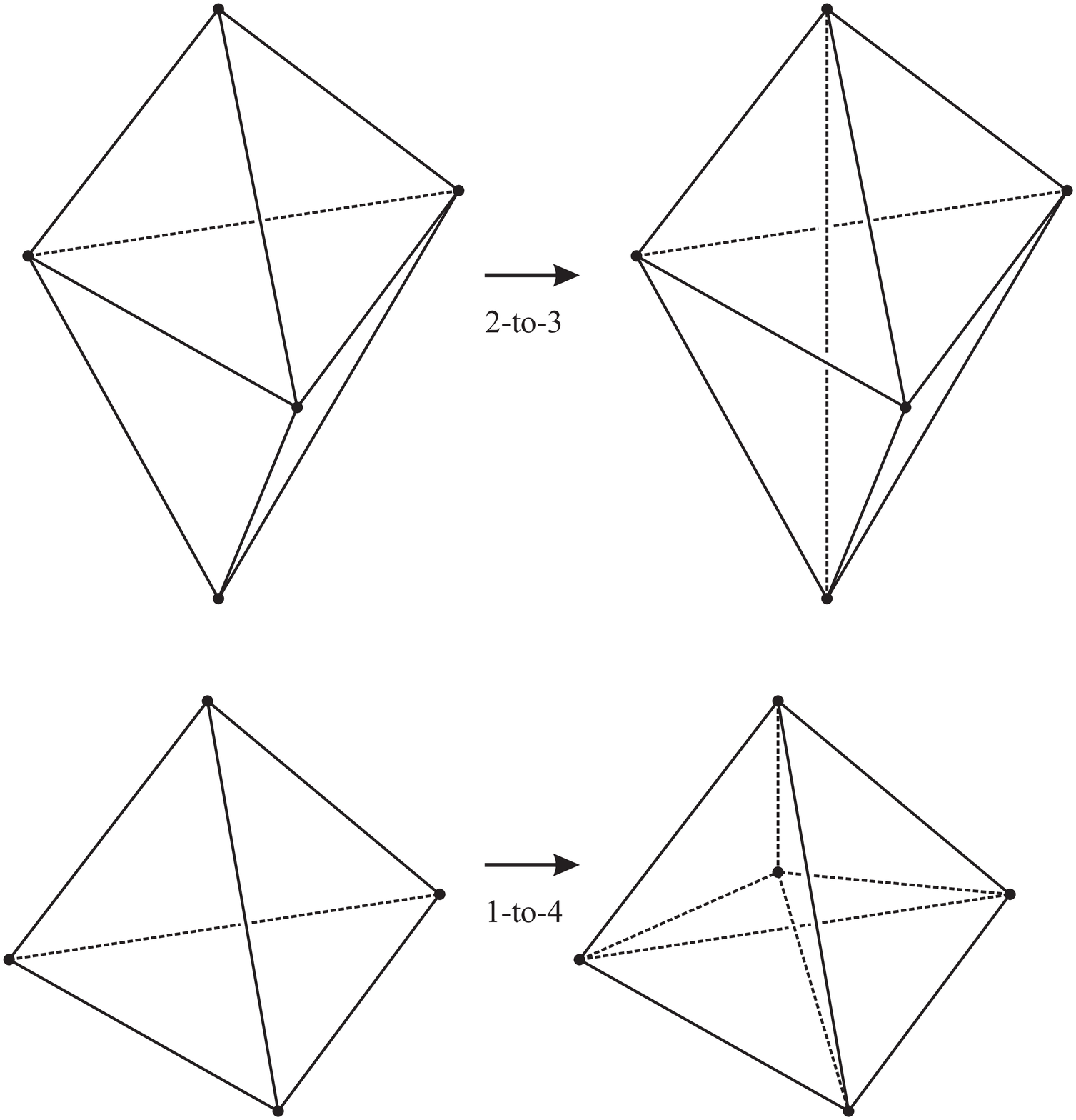}
    \mycap{The $2$-to-$3$ and the $1$-to-$4$ moves on triangulations.}
    \label{triamoves:fig}
    \end{center}
    \end{figure}

\begin{rem}\label{enum:rem}
\emph{Enumerating by computer the triangulations of closed orientable manifolds is
in principle easy, even if computationally demanding.
For increasing $n\geq 1$ one lists all the possible
orientation-reversing pairings between the faces
of $n$ tetrahedra yielding a connected result, and one checks that in the quotient
space the link of every vertex is the $2$-sphere $S^2$ (to do which one only has to show
that it has Euler characteristic $2$).}
\end{rem}

Here is a useful alternative viewpoint on triangulations. Let $M$ have one, and
consider the $2$-skeleton of the cell subdivision dual to the triangulation, as suggested
in Fig.~\ref{duality:fig}-left.
    \begin{figure}
    \begin{center}
    \includegraphics[scale=.45]{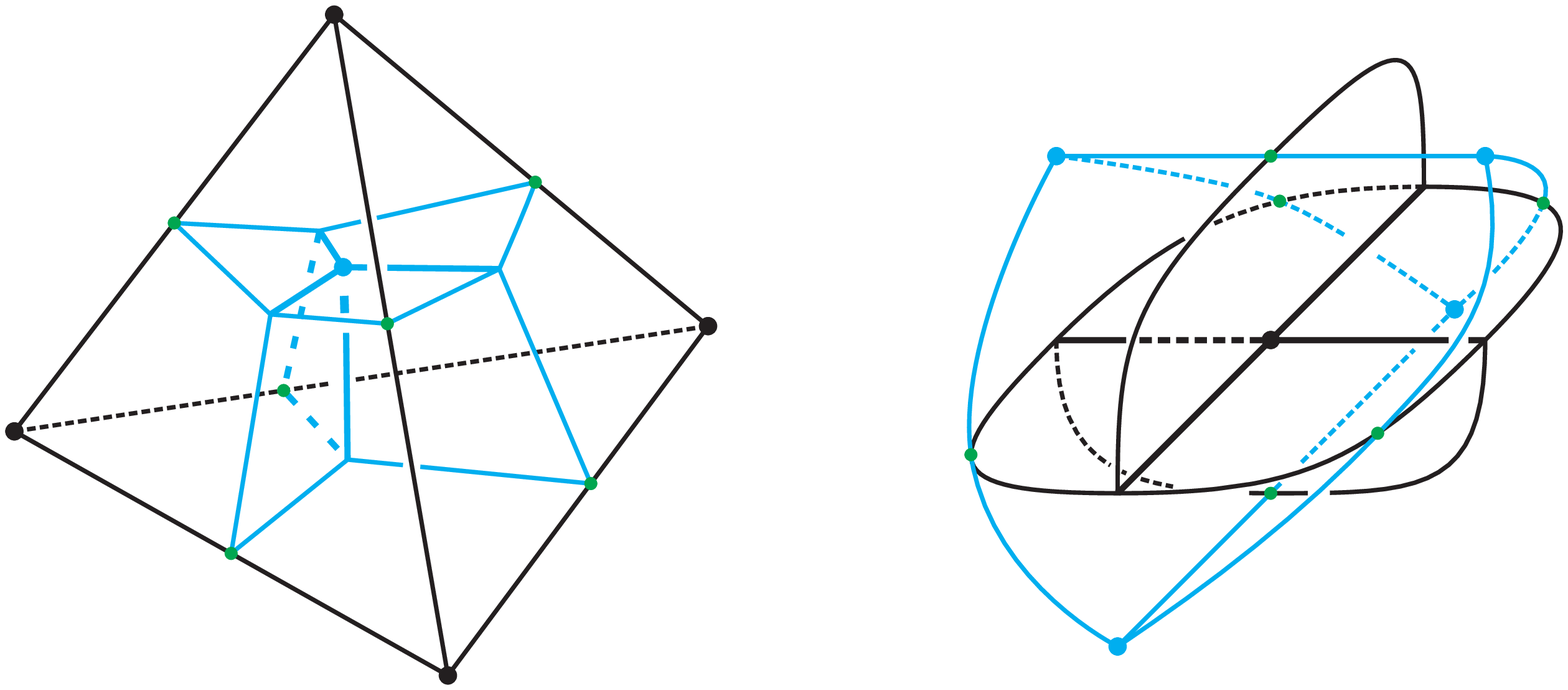}
    \mycap{Duality between triangulations and spines.}
    \label{duality:fig}
    \end{center}
    \end{figure}
This gives a \emph{spine} of $M$ minus the
vertices of the triangulation, namely a complex onto which this space collapses.
This complex is actually a \emph{special polyhedron}, namely one satisfying the
following conditions:
\begin{itemize}
\item It consists of non-singular surface points as in Fig.~\ref{special:fig}-left,
of singular points giving triple lines as in Fig.~\ref{special:fig}-center, and of
at least one singular \emph{vertex} as in Fig.~\ref{special:fig}-right;
\item The connected components of the set of non-singular points are open discs.
\end{itemize}
    \begin{figure}
    \begin{center}
    \includegraphics[scale=.45]{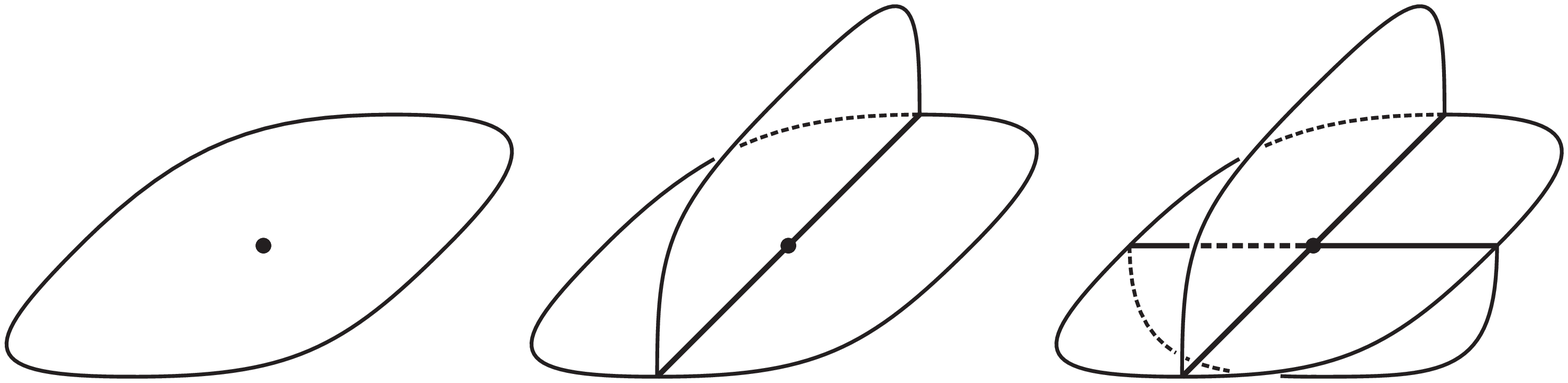}
    \mycap{Special polyhedra.}
    \label{special:fig}
    \end{center}
    \end{figure}
The construction can be reversed: using a technical
notion of \emph{orientability} for a special polyhedron (see for instance~\cite{BePe:ogra})
one uses Fig.~\ref{duality:fig}-right to associate to an orientable special polyhedron
a set of tetrahedra and a pairing between their faces. As illustrated below, this
does not always give a triangulation of a closed manifold, but one can check
whether it does along the lines of Remark~\ref{enum:rem}.
The spine versions of the moves on triangulations are shown in Fig.~\ref{spinemoves:fig}.
    \begin{figure}
    \begin{center}
    \includegraphics[scale=.45]{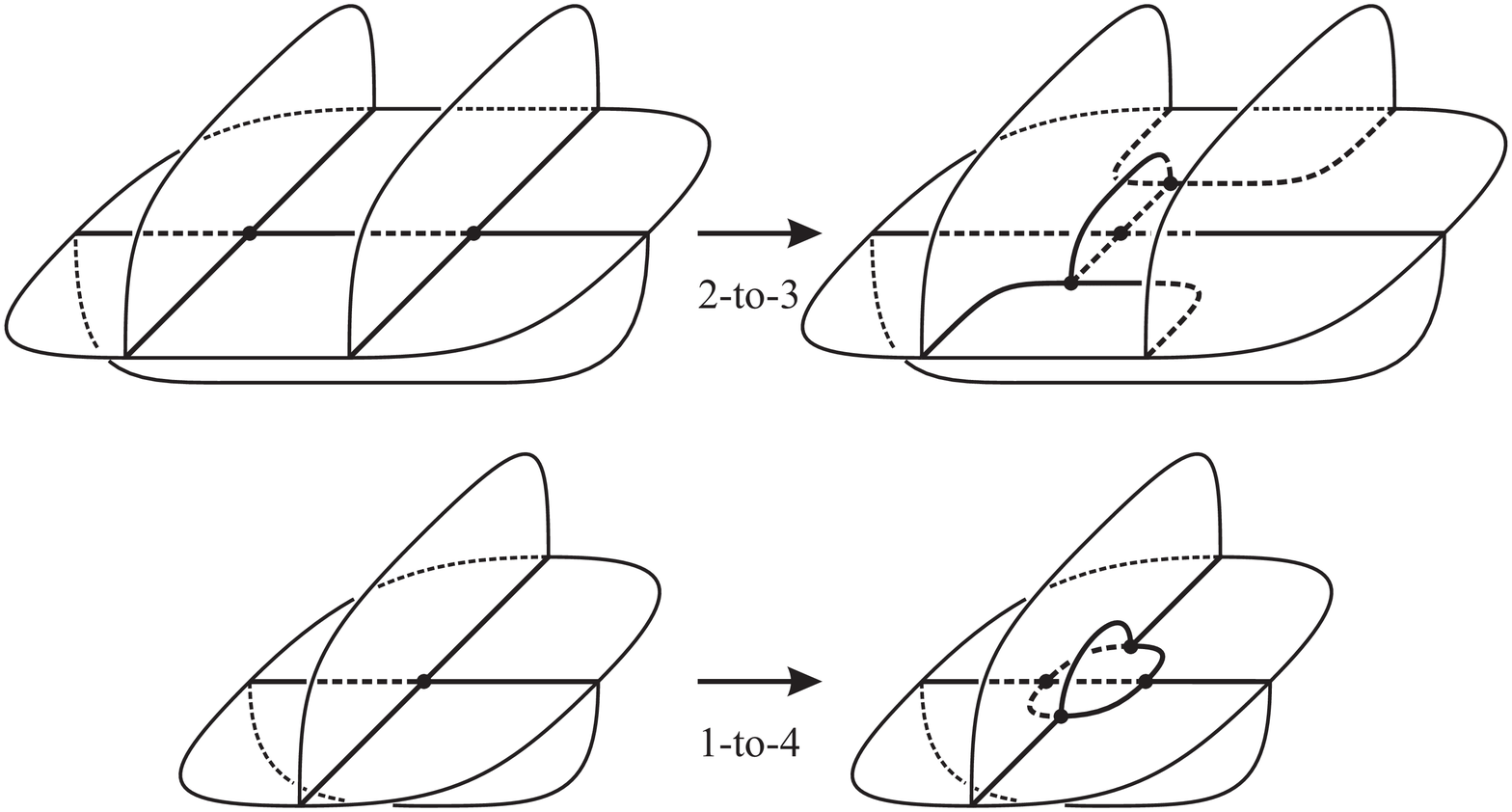}
    \mycap{The $2$-to-$3$ and the $1$-to-$4$ moves on special spines.}
    \label{spinemoves:fig}
    \end{center}
    \end{figure}

\paragraph{Ideal triangulations}
Turning to the case of a compact manifold $M$ with non-empty boundary $\partial M$,
one can adapt to $M$ the notion of (loose) triangulation by calling
\emph{ideal triangulation} any of the following pairwise equivalent notions:
\begin{itemize}
\item A realization of $M$ minus its boundary as the space obtained
by first gluing a finite number of disjoint tetrahedra along simplicial maps,
and then removing the vertices;
\item A realization of the space $X$ obtained from $M$ by collapsing each
component of $\partial M$ to a point as the quotient of a disjoint
union of tetrahedra under a simplicial pairing of the faces, in such a way that
the quotient vertices correspond to the collapsed components of $\partial M$;
\item A realization of $M$ as a gluing of truncated tetrahedra as in Fig.~\ref{trunca:fig},
with gluings between the lateral hexagons induced by simplicial gluings of the non-truncated
tetrahedra.
\end{itemize}
    \begin{figure}
    \begin{center}
    \includegraphics[scale=.45]{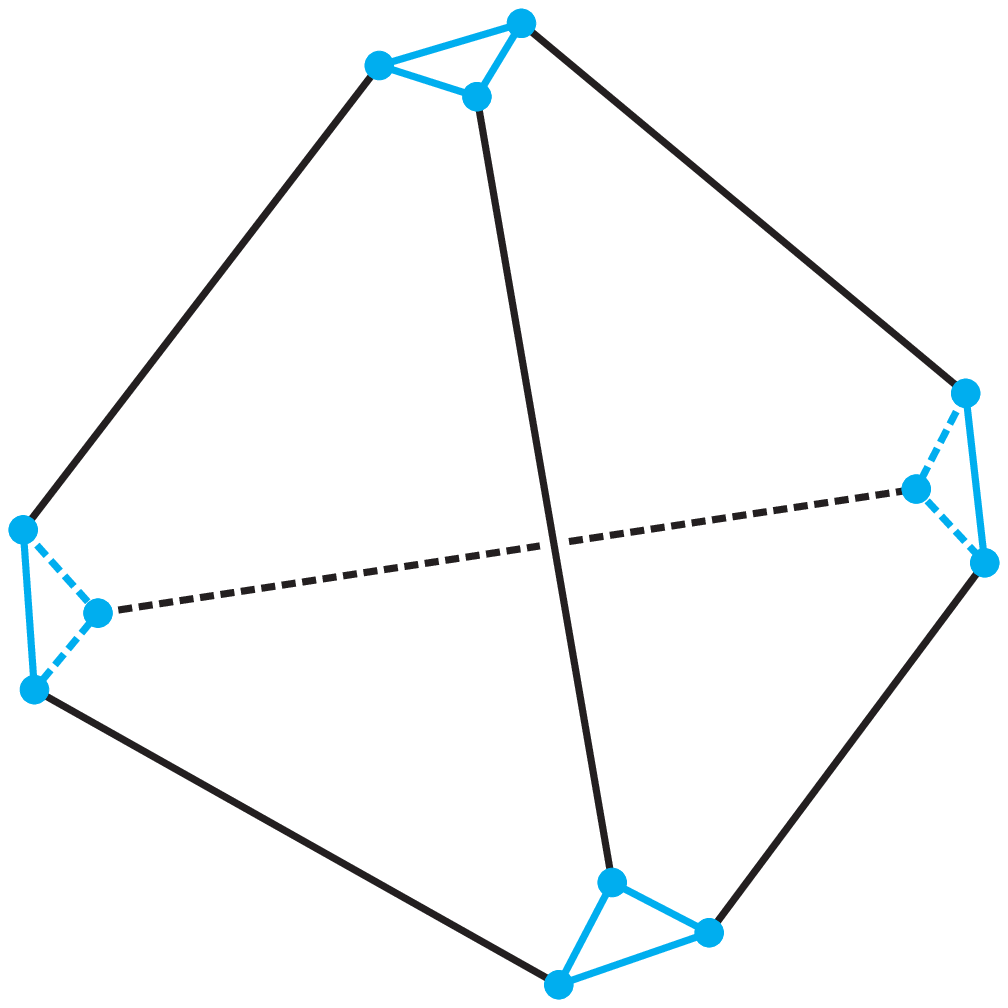}
    \mycap{A truncated tetrahedron.}
    \label{trunca:fig}
    \end{center}
    \end{figure}

For the next result we refer again to~\cite{mafo}:

\begin{thm}\label{bounded:calculus}
Any compact orientable $3$-manifold $M$ with non-empty boundary admits ideal triangulations, and
any two of them consisting of at least two tetrahedra can be transformed into each other
by repeated applications of the $2$-to-$3$ move shown in Fig.~\ref{triamoves:fig}-top
and its inverse.
\end{thm}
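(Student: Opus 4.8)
The plan is to transport the whole statement into the dual language of special spines, via the correspondence recalled in Fig.~\ref{duality:fig} and Fig.~\ref{spinemoves:fig}. An ideal triangulation of $M$ is the same datum as a special spine of $M$; under this duality the number of tetrahedra equals the number of singular vertices of the spine, and the $2$-to-$3$ move corresponds exactly to the $T$-move of Fig.~\ref{spinemoves:fig}-top. Thus it suffices to prove (i) that $M$ admits special spines, and (ii) that any two special spines of $M$ with at least two vertices are related by $T$-moves and their inverses. This is the form in which Matveev established the result, to which we refer via~\cite{mafo,matbook}.

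For the existence part (i) I would use that, since $\partial M\neq\emptyset$, the manifold $M$ collapses onto a $2$-dimensional subpolyhedron, a \emph{spine} $P$: starting a collapse of a triangulation of $M$ from its free boundary faces reduces $M$ to dimension $2$. Such a $P$ need not be special, but Matveev's local modifications turn it into one, producing the vertex-and-triple-line local models of Fig.~\ref{special:fig} and cutting the $2$-dimensional strata into open discs. Dualizing through Fig.~\ref{duality:fig}-right then yields an ideal triangulation, realizing $M$ as a gluing of truncated tetrahedra as in Fig.~\ref{trunca:fig}.

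The substance of the theorem is part (ii). Here I would follow Matveev's strategy: two special spines $P_1,P_2$ are cores of one and the same $M$, and one connects them through a controlled sequence of elementary polyhedral moves. Concretely one produces a common refinement --- a cell structure from which both $P_1$ and $P_2$ are recovered by collapsing --- and realizes the passage from $P_1$ to $P_2$ by $T$-moves together with a small repertoire of auxiliary moves (a lune move and a bubble/sliding move). The decisive step is then to show that, as soon as at least two vertices are present, each auxiliary move can be simulated by a finite sequence of $T$-moves and their inverses, so that the $T$-move alone generates the equivalence.

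I expect this last reduction to be the main obstacle. An isolated $T$-move readily destroys speciality --- it can produce non-disc $2$-strata, or simply fail to apply when too few vertices are available --- so one must keep every intermediate polyhedron special and first accumulate enough local complexity before any cancelling move can be carried out. The genuinely low-complexity triangulations, those built from a single tetrahedron, lie outside the reach of the $2$-to-$3$ move and must be set aside; this accounts for the ``at least two tetrahedra'' hypothesis. The analogy with the second item of Theorem~\ref{closed:calculus} is not accidental: for a manifold with boundary the number of ideal vertices is forced to equal the number of components of $\partial M$, hence it cannot change, and just as fixing the number of vertices removes the need for the $1$-to-$4$ move in the closed case, here no vertex-creating move is ever required.
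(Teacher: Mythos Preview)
The paper does not give its own proof of this theorem: it simply states ``For the next result we refer again to~\cite{mafo}'' and quotes the statement. Your proposal is a faithful outline of exactly the Matveev--Piergallini argument that~\cite{mafo,matbook,pierg} contain, carried out in the dual language of special spines; so your approach coincides with the one the paper defers to.

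One small imprecision is worth correcting. You write that ``an isolated $T$-move readily destroys speciality --- it can produce non-disc $2$-strata''. In fact the forward $T$-move (the $2$-to-$3$ direction) applied to a special polyhedron always yields a special polyhedron; it is the \emph{inverse} $T$-move that may create non-disc $2$-components or annihilate the last vertex, and this is where the ``at least two vertices'' hypothesis enters. The delicate part of Matveev's argument is indeed the reduction of the auxiliary (lune) move to a composition of $T^{\pm 1}$-moves while keeping all intermediate polyhedra special, which requires having enough vertices around to work with. With that adjustment your sketch is accurate.
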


\begin{rem}
\emph{It is actually quite easy to deduce Theorem~\ref{closed:calculus} from
Theorem~\ref{bounded:calculus}. One only needs to remark that removing some number
$v$ of open $3$-balls from a connected and closed $M$ is a well-defined operation, from the result
of which $M$ can be reconstructed unambiguously by capping off
the boundary spheres. Moreover the $1$-to-$4$ move of
Fig.~\ref{triamoves:fig}-bottom is one that allows to increase by $1$ the number
of vertices of an ideal triangulation, and hence to increase by $1$ the number of punctures
in a punctured closed manifold represented by the triangulation.}
\end{rem}

The dual viewpoint of special spines carries over to the case of manifolds with boundary,
and the corresponding statement is actually even more expressive:

\begin{thm}\label{bounded:spines:calculus}
\begin{itemize}
\item Each orientable compact $3$-manifold with non-empty boundary admits special spines;
\item Each orientable special polyhedron is the spine of a unique $3$-manifold with non-empty boundary;
\item Two special spines of the same $3$-manifold with non-empty boundary, both having at least two vertices,
are related to each other by repeated applications of the $2$-to-$3$ move of Fig.~\ref{spinemoves:fig}-top
and its inverse.
\end{itemize}
\end{thm}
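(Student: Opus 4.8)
The plan is to exploit the duality between ideal triangulations and special spines sketched in Figure~\ref{duality:fig} and to transport Theorem~\ref{bounded:calculus} across it, the one genuinely new ingredient being a uniqueness statement. The first task is to make this duality precise as a bijection. Given an ideal triangulation of $M$, I form the cell decomposition of $M$ dual to it and take its $2$-skeleton $P$: the centres of the tetrahedra become the singular vertices of $P$, the glued triangular faces become the triple lines, and the edges of the triangulation become the $2$-cells. One checks from the local pictures of Figure~\ref{special:fig} that $P$ has exactly the local models of a special polyhedron — four triple lines and six wings meeting at each vertex, three wings along each triple line, matching the $4$ faces and $6$ edges of a tetrahedron — and that its $2$-cells are open discs, each being dual to a single edge. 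Since the ideal vertices sit on $\partial M$, the polyhedron $P$ lies in $\mathrm{int}(M)$ and $M$ collapses onto it, so $P$ is a special spine of $M$; the orientation-reversing character of the face pairing translates into the orientability of $P$ in the sense of~\cite{BePe:ogra}. Combined with the existence half of Theorem~\ref{bounded:calculus}, this already yields the first bullet.

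Conversely I would run the construction of Figure~\ref{duality:fig}-right: an orientable special polyhedron $P$ reverses to a finite set of tetrahedra with an orientation-reversing pairing of their faces. Here lies the difference with the closed case emphasised after Remark~\ref{enum:rem}. Each triple line of $P$ joins two (possibly coinciding) vertices, hence pairs up two tetrahedron faces, so the reverse construction always yields a genuine face pairing; away from the tetrahedron vertices the quotient is therefore a manifold, and the links of the vertices are automatically \emph{closed} (possibly non-spherical) surfaces. Declaring these links to be the boundary, one obtains a bona fide ideal triangulation of a compact orientable $M$ with $\partial M\neq\emptyset$, of which $P$ is again the dual spine. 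This gives existence in the second bullet.

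The substantive point is the \emph{uniqueness} clause of the second bullet, essentially Casler's theorem that $P$ determines $M$. The argument I would give is that a special polyhedron admits a canonical thickening to an orientable $3$-manifold: one fixes a standard thickening for each of the three local models of Figure~\ref{special:fig} and checks that the orientability hypothesis is exactly what is needed to glue these pieces together consistently and in an essentially unique way, as the combinatorics of $P$ prescribes. By uniqueness of regular neighbourhoods, any compact $M$ having $P$ as a spine is recovered as this thickening, so $M$ is well defined up to homeomorphism. This is the step I expect to be the main obstacle, since it is where the technical notion of orientability must be used with care and where the purely local gluing data must be shown to assemble unambiguously into a single global manifold.

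Finally, for the third bullet I would use that the bijection above is move-equivariant: comparing Figures~\ref{triamoves:fig}-top and~\ref{spinemoves:fig}-top shows that the $2$-to-$3$ move on triangulations corresponds cell-by-cell to the $2$-to-$3$ move on spines, and that the number of tetrahedra equals the number of vertices of the dual spine. Thus, given two special spines $P_1,P_2$ of $M$, each with at least two vertices, I dualise them to ideal triangulations $T_1,T_2$ of $M$, each with at least two tetrahedra; apply the move half of Theorem~\ref{bounded:calculus} to connect $T_1$ and $T_2$ by $2$-to-$3$ moves and their inverses; and dualise the resulting sequence back to a sequence of spine moves relating $P_1$ and $P_2$.
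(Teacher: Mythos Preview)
Your proposal is correct and follows precisely the route the paper indicates: the paper does not give a self-contained proof of this theorem but presents it as the spine reformulation of Theorem~\ref{bounded:calculus} via the duality of Figure~\ref{duality:fig}, referring to~\cite{mafo,matbook,pierg} for details. Your sketch makes that transport explicit --- existence from the existence of ideal triangulations, the move calculus from move-equivariance of the duality, and uniqueness from the Casler-type thickening argument --- which is exactly the intended derivation.
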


\begin{rem}\label{one:tetra:rem}
\emph{We have repeatedly excluded from our statements the triangulations consisting
of one tetrahedron only (and, dually, the spines having one vertex only). This is not
a serious issue, because only a small number of uninteresting manifolds
are described by these triangulations or spines.}
\end{rem}

\paragraph{Knots, links and graphs}
Besides manifolds, \emph{knots} are the next main objects of interest in $3$-dimensional topology.
According to the basic definition, a knot is a tamely embedded circle in $3$-space, but
one can easily extend the situation by considering \emph{links}, defined as disjoint unions
of knots, and let the ambient manifold in which a link is embedded be an arbitrary closed one.
This leads to considering pairs $(M,L)$, with closed $M$ and $L\subset M$ a link, that
we will always view up to equivalence of pairs (in the appropriate category) without further mention.
We then define a \emph{triangulation} of a link-pair $(M,L)$ as a (loose) triangulation of
$M$ that contains $L$ as a subset of its $1$-skeleton.
The next result was implicit in
the work of Turaev and Viro~\cite{TV} and was formally established by Amendola~\cite{Amen:calc}
(see also Pervova and the author~\cite{PePe:links} for more on spines of link-pairs):

\begin{thm}\label{link:calc}
Every link-pair $(M,L)$ with non-empty $L$ admits triangulations
with precisely one vertex on each component of $L$. Any two such triangulations
of $(M,L)$ consisting of at least two tetrahedra can be transformed into each other
by repeated applications of the $2$-to-$3$ move shown in Fig.~\ref{triamoves:fig}-top, and
of the inverse of this move applied when the edge that disappears with the move does not belong to $L$.
\end{thm}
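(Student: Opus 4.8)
The plan is to reduce the statement to the boundary calculus of Theorem~\ref{bounded:calculus} by passing to the link complement. Write $C = M \setminus N(L)$ for the compact manifold obtained by removing an open regular neighbourhood $N(L)$ of $L$; since each component of $L$ is a circle, $\partial C$ is a disjoint union of tori, one over each component $L_i$, and the pair $(M,L)$ is recovered from $C$ by Dehn filling along the meridians, the cores of the filling solid tori being the $L_i$. The crux is a dictionary between triangulations of $(M,L)$ whose vertex set consists of exactly one point on each $L_i$ and ideal triangulations of $C$; note that the counts match, one cusp torus of $C$ per component of $L$ and per vertex of the triangulation, and that the two allowed moves preserve the vertex set, so they keep us inside this class. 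Dually, in the spine language of Theorem~\ref{bounded:spines:calculus} the operation ``remove $L$'' becomes the transparent operation of deleting from a special spine of $M$ the two-dimensional regions dual to the edges forming $L$, which should be the cleanest way to see the correspondence.

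For existence I would argue directly rather than through $C$. Starting from any triangulation of $M$ provided by Theorem~\ref{closed:calculus}, I would isotope $L$ into general position with respect to it and pass to a subdivision in which $L$ becomes a subcomplex of the $1$-skeleton; this produces a triangulation of the pair, but typically with many vertices, on and off $L$. I would then contract edges repeatedly, pushing every vertex towards $L$ and merging the vertices on each $L_i$, subdividing first when needed to guarantee that a contraction keeps the link of each vertex a sphere, exactly the condition checked as in Remark~\ref{enum:rem}. Since each contraction strictly decreases the number of vertices, the process terminates with a triangulation having precisely one vertex on each component of $L$ and no others.

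The calculus is the substantive part. Given two triangulations $\calT,\calT'$ of $(M,L)$, each with one vertex per component and at least two tetrahedra, I would associate ideal triangulations $\Phi(\calT),\Phi(\calT')$ of $C$ by drilling out $L$ and re-triangulating a neighbourhood of the drilled locus by ideal tetrahedra, matching the vertex $p_i$ on $L_i$ with the cusp torus over $L_i$. Theorem~\ref{bounded:calculus} then connects $\Phi(\calT)$ and $\Phi(\calT')$ by a finite sequence of $2$-to-$3$ moves and inverses. The remaining work is to pull this sequence back: a $2$-to-$3$ move performed away from the cusps lifts verbatim to a $2$-to-$3$ move on $(M,L)$, while an inverse move lifts to a $3$-to-$2$ move whose disappearing edge is not on $L$, which is precisely the restricted inverse allowed in the statement. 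One must also verify that a move occurring inside the re-triangulated neighbourhood of $L$ descends to a composition of allowed moves near $L_i$, and that the ``at least two tetrahedra'' hypothesis transfers, the one-tetrahedron exclusions being harmless as in Remark~\ref{one:tetra:rem}.

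The main obstacle is making the dictionary $\Phi$ precise and, above all, move-compatible near $L$. The difficulty is genuine: a link component is an edge-\emph{loop} based at a single vertex, so merely deleting it from a tetrahedron yields an edge-truncated rather than a vertex-truncated (ideal) tetrahedron, and one must build a local model converting the edge-loop picture into the cusped picture and show it is essentially canonical. Controlling how the moves of Theorem~\ref{bounded:calculus} taking place inside this local model descend to honest $2$-to-$3$ and restricted $3$-to-$2$ moves on $(M,L)$, and conversely that every allowed move on $(M,L)$ induces a legitimate move on $C$, is where the proof does its real work; the ban on the unrestricted inverse is exactly the combinatorial shadow of the requirement that $L$ never be deleted from the $1$-skeleton, as shown in Fig.~\ref{triamoves:fig}.
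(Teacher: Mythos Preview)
The paper does not actually prove this theorem: it is a survey, and Theorem~\ref{link:calc} is stated with attribution to Turaev--Viro and Amendola, without any argument given. So there is no ``paper's own proof'' to compare against; I can only assess your proposal on its merits.

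Your proposal is a plan rather than a proof, and the gap you yourself flag is a real one that your outline does not close. The dictionary $\Phi$ you want does not exist in the form you describe. An ideal triangulation of $C=M\setminus N(L)$, upon Dehn filling, gives a triangulation of $M$ with one vertex per component of $L$, but in that triangulation $L$ is \emph{not} in the $1$-skeleton at all: each $L_i$ has been collapsed to the single vertex coming from the corresponding ideal vertex. Conversely, a triangulation of $(M,L)$ with $L$ realised as edge-loops does not yield an ideal triangulation of $C$ by any natural operation; as you note, removing $L$ truncates edges, not vertices. So the two classes of objects are genuinely different combinatorial species, and the reduction to Theorem~\ref{bounded:calculus} is not available without substantial new input. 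Your phrase ``re-triangulating a neighbourhood of the drilled locus'' names the problem but does not solve it: any such local model introduces auxiliary tetrahedra and choices, and one must then show that \emph{every} sequence of $2$-to-$3$ moves on $C$ can be realised by allowed moves on $(M,L)$, including those that interact with the auxiliary region. That is essentially the whole theorem.

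Amendola's argument (the reference~\cite{Amen:calc} the paper cites) works directly with triangulations of the pair, or equivalently with marked special spines, and establishes the calculus intrinsically rather than by reduction to the unmarked boundary case. Your existence paragraph is reasonable as a sketch, though ``contract edges repeatedly'' also needs care to stay within the class of loose triangulations; but the calculus part, as written, has not left the starting blocks.
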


A further category of objects that one deals with is given by the pairs $(M,G)$ where $M$ is a closed
$3$-manifold and $G\subset M$ is a graph, that is a $1$-subcomplex of $M$. A \emph{triangulation}
of $(M,G)$ is one of $M$ that contains $G$ as a subcomplex of its $1$-skeleton.
The previous result holds also for these objects, with the requirement that the triangulation
should have one vertex at each vertex of $G$ and one on each knot component of $G$.

\paragraph{Orbifolds}
We finally introduce orbifolds, defined as spaces having a singular differentiable structure
locally defined as the quotient of Euclidean space under the action of a finite group
of orientation-preserving diffeomorphisms. Since a finite orientable differentiable action is conjugate
to a special orthogonal one, one sees that the local group acting can be assumed to
be either cyclic, or dihedral, or the automorphism group of one
of the Platonic solids. This implies that the support of a (closed, orientable, locally orientable) $3$-orbifold
is a closed orientable $3$-manifold, in which the singular locus is a trivalent graph
with edges labelled by integers and local aspect as in Fig.~\ref{orbs:fig}.
    \begin{figure}
    \begin{center}
    \includegraphics[scale=.45]{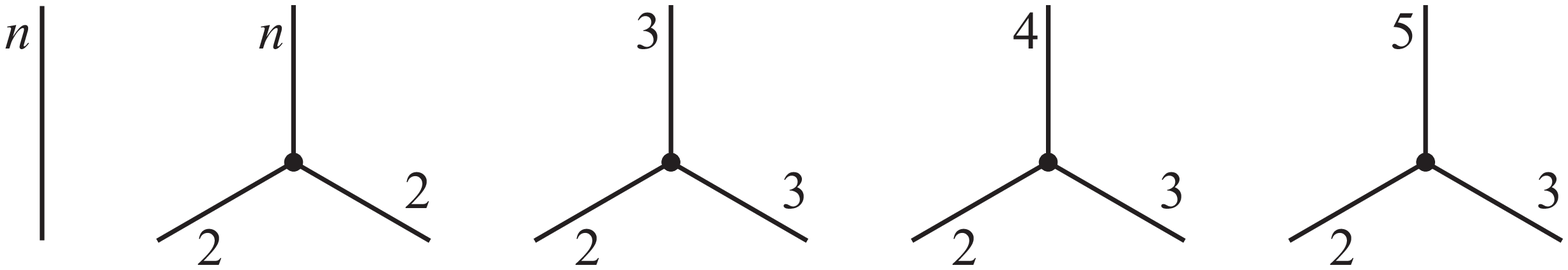}
    \mycap{Local aspect of a closed locally orientable $3$-orbifold.}
    \label{orbs:fig}
    \end{center}
    \end{figure}

\section{Hyperbolic structures}
In this section we review the definition of hyperbolic $n$-space, we summarize its main features, and
we define the hyperbolic structures we will be interested in constructing on each of the types of
topological $3$-dimensional objects illustrated in the previous section.

\paragraph{Hyperbolic $n$-space}
The $n$-dimensional hyperbolic space $\matH^n$ can be defined as the only complete and simply connected
Riemannian $n$-manifold having all sectional curvatures equal to $-1$, see~\cite{unique:hyp}.
For our purposes it will however be helpful to have at hand the following concrete models of this space:
\begin{itemize}
\item \emph{The disc model}, defined as the open unit disc $$B^n=\{x\in\matR^n:\ \|x\|<1\}$$ endowed with the metric
$$\diff s^2_x=\frac{\diff x^2}{4\left(1-\|x\|^2\right)^2};$$
\item \emph{The half-space model}, defined as the upper half-space $$\pi^n_+=\{x\in\matR^n:\ x_n>0\}$$
endowed with the metric $$\diff s^2_x=\frac{\diff x^2}{x_n^2};$$
\item \emph{The hyperboloid model}, defined as the hyperboloid
$$\calH^n_+=\left\{x\in\matR^{1,n}:\ \scalgen xx=-1,\ x_0>0\right\},$$
where $\matR^{1,n}$ is the Minkowski space $\matR^{n+1}$ endowed with the metric
$\scalgen xy=-x_0y_0+x_1y_1+\ldots+x_ny_n$; the Riemannian metric on $\calH^n_+$ is given by the
the restriction of the metric $\scalgen{\cdot}{\cdot}$ to the hyperplanes
tangent to $\calH^n_+$, on which $\scalgen{\cdot}{\cdot}$ is positive-definite.
\end{itemize}
The different models allow to single out some of the features of $\matH^n$ that we will need below
(see~\cite{bible,lectures,ratcliffe}):
\begin{itemize}
\item As one sees very well from the disc model, $\matH^n$ has a natural compactification
obtained by adding the points at infinity, that constitute an $(n-1)$-dimensional sphere $\partial\matH^n$;
\item The \emph{geodesics} of $\matH^n$ ending at the point $\infty$ in the half-space
model $\pi_n^+$ are the vertical half-lines;
\item A \emph{horosphere}, defined as a connected complete hypersurface orthogonal to all the geodesics
ending at a given point of $\partial\matH^n$, called its center, if centered at $\infty$ in the
$\pi^n_+$ model is given by a horizontal hyperplane, so it is endowed with a natural Euclidean structure;
moreover the horosphere together with its center bound a topological disc
in the compactified hyperbolic space, called a \emph{horoball};
\item An isometry $\gamma$ of $\matH^n$ must have fixed points either in $\matH^n$ or in $\partial\matH^n$, and
hence it must be of one of the following types:
\begin{itemize}
\item \emph{elliptic}, namely with fixed points in $\matH^n$; in this case,
assuming $0$ is fixed in the disc model, $\gamma$ can
be identified to an orthogonal matrix;
\item \emph{parabolic}, namely with no fixed points in $\matH^n$ and
exactly one on $\partial\matH^n$; in this case,
assuming $\infty$ is fixed in the half-space model,
$\gamma$ can be identified to an affine isometry of Euclidean space $\matR^{n-1}$
acting horizontally on $\pi^n_+$; in particular, if $n=3$ and $\gamma$ preserves the orientation,
it is just a horizontal translation;
\item \emph{hyperbolic}, namely with no fixed points in $\matH^n$ and exactly two on $\partial\matH^n$; in this case,
assuming $0$ and $\infty$ are fixed in $\pi^n_+$, it has the form
$$\gamma(x)=\lambda\cdot\left(\begin{array}{cc}A & 0 \\ 0 & 1\end{array}\right)\cdot x$$
with $A\in\mathrm{O}(n-1)$ and $\lambda>1$.
\end{itemize}
\end{itemize}

\paragraph{Closed and cusped hyperbolic manifolds}
Let us temporarily drop our assumption that all manifolds should be compact, and
take a possibly open $n$-dimensional one $N$. A \emph{hyperbolic structure} on $N$ can be defined
in one of the following equivalent ways:
\begin{itemize}
\item A complete Riemannian metric on $N$ with all sectional curvatures equal to $-1$;
\item A complete Riemannian metric on $N$ making it locally isometric to $\matH^n$;
\item An identification between $N$ and the quotient of $\matH^n$ under the action
of a discrete and torsion-free group of isometries;
\item A faithful representation of $\pi_1(N)$ into the group of the isometries of $\matH^n$ having
discrete and torsion-free image.
\end{itemize}
To state the first main general result we need to introduce further notation.
Given a Riemannian manifold $N$ and $\varepsilon>0$, we define
the $\varepsilon$-thick part $N_{[\varepsilon,+\infty)}$ of $N$ as the set of
$x\in N$ such that every loop based at $x$ and having length at most $\varepsilon$
is null in $\pi_1(N,x)$, and the $\varepsilon$-thin part
$N_{(0,\varepsilon]}$ of $N$ as the closure of the complement of its $\varepsilon$-thick part.
The following holds true:

\begin{thm}[Margulis lemma]
There exists $\varepsilon>0$ depending only on $n$ such that
if a hyperbolic $N$ is non-compact but has finite volume then its
$\varepsilon$-thick part $N_{[\varepsilon,+\infty)}$ is compact, and its $\varepsilon$-thin part
$N_{(0,\varepsilon]}$ is a disjoint union of components of the form $\Sigma\times[0,\infty)$, with $\Sigma$
a closed Euclidean $(n-1)$-manifold.
\end{thm}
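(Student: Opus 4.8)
The plan is to pass to the universal cover and write $N=\matH^n/\Gamma$ for a discrete, torsion-free group $\Gamma<\mathrm{Isom}^+(\matH^n)$, as permitted by the third description of a hyperbolic structure recalled above. The thin part is then exactly the locus of small injectivity radius: if $\tilde x\in\matH^n$ lies over $x\in N$, then $x\in N_{(0,\varepsilon]}$ precisely when $d(\tilde x,\gamma\tilde x)\le\varepsilon$ for some $\gamma\in\Gamma\setminus\{1\}$. First I would therefore reduce the whole statement to understanding, for each $\tilde x$, the subgroup $\Gamma_\varepsilon(\tilde x)=\langle\gamma\in\Gamma:\ d(\tilde x,\gamma\tilde x)\le\varepsilon\rangle$ generated by the short-displacement elements, together with the region of $\matH^n$ it moves only a little.

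The hard part will be the algebraic Margulis lemma: there is a constant depending only on $n$ such that, for $\varepsilon$ below it, every such $\Gamma_\varepsilon(\tilde x)$ is virtually nilpotent. I would establish this by the Zassenhaus--Kazhdan--Margulis argument. A small displacement $d(\tilde x,\gamma\tilde x)$ forces $\gamma$ to lie close to the compact stabilizer $\mathrm{SO}(n)$ of $\tilde x$; the crucial analytic input is that in a Lie group the commutator of two elements near the identity is quadratically nearer to it, so that iterated commutators of a family of near-identity elements tend to $1$. Discreteness of $\Gamma$ then makes these iterated commutators actually equal to $1$ after finitely many steps, giving nilpotency of a finite-index subgroup. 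The genuine obstacle is uniformity: the naive Zassenhaus neighbourhood depends on the group, and to obtain one depending only on $\mathrm{Isom}(\matH^n)$, hence only on $n$, requires the Kazhdan--Margulis refinement, while the compact isotropy $\mathrm{SO}(n)$ must be absorbed with care.

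Next I would classify the discrete, torsion-free, virtually nilpotent subgroups $H<\mathrm{Isom}^+(\matH^n)$ that can arise, using that each fixes a point of the compactification $\matH^n\cup\partial\matH^n$. If $H$ fixes an interior point it is elliptic, hence conjugate into $\mathrm{SO}(n)$, hence finite, hence trivial by torsion-freeness. If it fixes two boundary points it preserves a geodesic and is infinite cyclic on a hyperbolic generator, and the associated thin region is a compact tube about the resulting short geodesic. The case producing cusps is that of a single fixed boundary point, which I conjugate to $\infty$ in the half-space model $\pi^n_+$: then $H$ consists of parabolics acting on each horizontal horosphere $\matR^{n-1}$ as a discrete group of Euclidean isometries, so by Bieberbach's theorem it is virtually a translation lattice and the quotient of a horosphere is a closed flat $(n-1)$-manifold $\Sigma$.

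Finally I would assemble the global picture. For parabolic $H$ fixing $\infty$ the points displaced by at most $\varepsilon$ form a region $\{x_n\geq t_0\}$ above a horosphere, and its quotient by $H$ is diffeomorphic to $\Sigma\times[0,\infty)$, the $[0,\infty)$ factor recording depth into the cusp; these are precisely the non-compact thin components, of the asserted form, whereas any tube components are compact. To finish I would use finiteness of volume: the thick part has injectivity radius bounded below, so it carries disjoint embedded balls of a fixed radius, only finitely many of which fit under the volume bound, whence the thick part is precompact and, being closed in the complete manifold $N$, compact; non-compactness of $N$ then forces at least one cusp to be present.
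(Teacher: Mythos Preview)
The paper does not prove this theorem at all: it is a survey article, and the Margulis lemma is simply stated as a known background result (introduced by ``The following holds true:'') with no argument given, so there is no paper proof to compare yours against.

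That said, your outline is the standard route to the result and is essentially sound. A couple of points deserve tightening. First, the theorem as stated asserts that \emph{every} thin component is of the form $\Sigma\times[0,\infty)$; your argument correctly produces Margulis tubes as well, and you note they are compact, but to match the stated conclusion you must explain why they are absorbed into the thick part (for instance by shrinking $\varepsilon$ within the Margulis range, or by observing that the paper's formulation is slightly informal on this point). Second, your compactness argument for the thick part is a bit telegraphic: from the lower bound on injectivity radius one gets a packing by disjoint $\tfrac{\varepsilon}{2}$-balls and hence only finitely many such balls, but to conclude compactness you should pass to a \emph{covering} by balls of slightly larger radius (or invoke that a complete Riemannian manifold with a uniform lower injectivity-radius bound and finite volume is compact). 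With these two clarifications your sketch would be a correct proof.
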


Since the only closed orientable surface carrying a Euclidean structure is the torus $T$,
this result implies that an orientable $3$-dimensional finite-volume hyperbolic $N$
is the union of a compact manifold $M$ bounded by tori and a finite number of \emph{cusps} based
on tori, as suggested in Fig.~\ref{cusps:fig}.
    \begin{figure}
    \begin{center}
    \includegraphics[scale=.45]{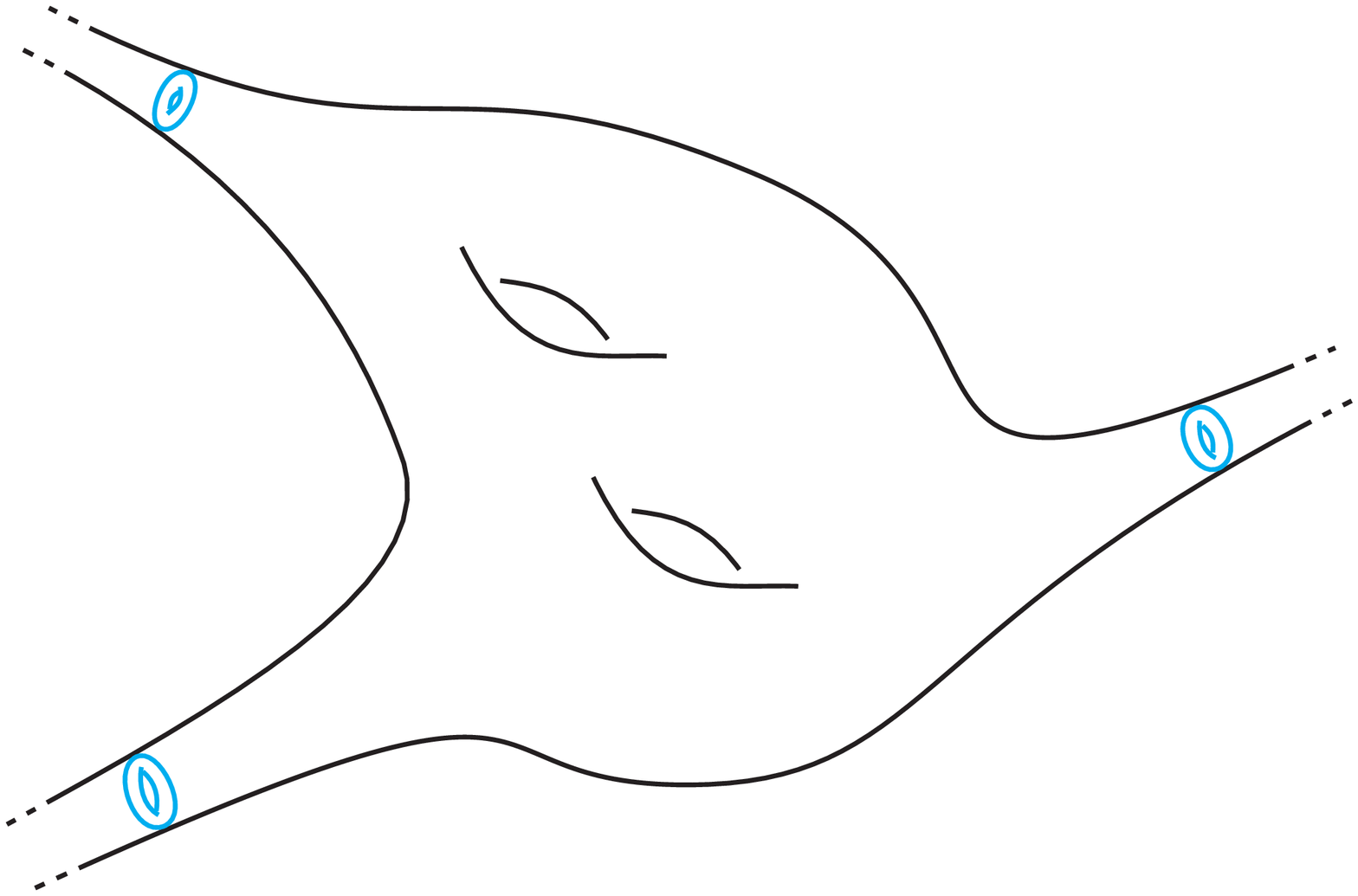}
    \mycap{An allusive picture of a cusped hyperbolic $3$-manifold.}
    \label{cusps:fig}
    \end{center}
    \end{figure}
Moreover $N$ can be identified to the interior of $M$. For this reason, with a slight abuse of terminology,
we will say that $M$ itself is hyperbolic, always meaning that the hyperbolic structure is
actually defined on the interior of $M$, and that the toric boundary components of $M$ give rise to cusps.

The next general result is the following one:

\begin{thm}[Mostow rigidity]
If $n\geq 3$ two finite-volume hyperbolic $n$-manifolds having isomorphic fundamental
groups are isometric to each other. In particular, every $n$-manifold carries at most one
finite-volume hyperbolic metric up to isometry.
\end{thm}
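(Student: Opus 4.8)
The plan is to realize the given isomorphism $\phi\colon\pi_1(N_1)\to\pi_1(N_2)$ by an isometry, working on the sphere at infinity $S^{n-1}=\partial\matH^n$ where the fundamental groups act. Writing $N_i$ as $\matH^n/\Gamma_i$ with $\Gamma_i$ a discrete torsion-free group of isometries, I first produce a $\phi$-equivariant map $\ftil\colon\matH^n\to\matH^n$. To do so I would build a homotopy equivalence $N_1\to N_2$ inducing $\phi$ — which exists because both manifolds are aspherical, their common universal cover $\matH^n$ being contractible — take it smooth and proper, and lift it to the universal covers. The essential structural fact is that $\ftil$ is a quasi-isometry: on the compact thick parts this is automatic by compactness, while on the cusps one controls the geometry using the Margulis lemma, whose thin parts $\Sigma\times[0,\infty)$ are understood explicitly. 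Handling the cusps also requires that $\phi$ send peripheral parabolic subgroups to peripheral parabolic subgroups, which I would extract from the algebraic characterization of parabolic elements.

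Next I would pass to the boundary. Since $\matH^n$ is negatively curved, quasi-geodesics remain uniformly close to honest geodesics (Morse stability), so the quasi-isometry $\ftil$ extends to a homeomorphism $\partial\ftil\colon S^{n-1}\to S^{n-1}$ that is again $\phi$-equivariant. The analytic content of the argument is then to prove that $\partial\ftil$ is quasi-conformal; once this is known, the measurable theory of quasi-conformal maps guarantees that $\partial\ftil$ is differentiable almost everywhere.

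The decisive step is to upgrade quasi-conformality to genuine conformality, and this is where I expect the main difficulty to lie. Here I would exploit the ergodicity of the geodesic flow of the finite-volume manifold $N_1$ — equivalently, the ergodicity of the diagonal $\Gamma_1$-action on the pairs of distinct endpoints in $S^{n-1}\times S^{n-1}$. At almost every point the derivative of $\partial\ftil$ has a well-defined field of directions of maximal stretch; by equivariance this field is $\Gamma_1$-invariant, so ergodicity forces it to be essentially constant, and a nonelementary group such as $\Gamma_1$ cannot preserve a nontrivial direction field on $S^{n-1}$. Hence the derivative is conformal almost everywhere, so $\partial\ftil$ is $1$-quasi-conformal. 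At this point the hypothesis $n\geq 3$ becomes essential: by Liouville's theorem a conformal homeomorphism of $S^{n-1}$ with $n-1\geq 2$ is the restriction of a M\"obius transformation of the sphere, hence of an isometry $\psi$ of $\matH^n$. (For $n=2$ the boundary is only $S^1$, conformal maps abound, and the statement genuinely fails, as Teichm\"uller theory shows.)

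Finally, being $\phi$-equivariant on $S^{n-1}$, the isometry $\psi$ conjugates $\Gamma_1$ onto $\Gamma_2$ and therefore descends to an isometry $N_1\to N_2$ inducing $\phi$, as required. The last assertion of the statement is the special case in which $N_1$ and $N_2$ are the same smooth manifold equipped with two finite-volume hyperbolic metrics and $\phi$ is the identity on $\pi_1$: the isometry produced then identifies the two metrics. Besides the conformality upgrade, the secondary technical obstacle is the cusp bookkeeping in the noncompact case, namely guaranteeing that the equivariant map and its boundary extension behave well on the thin parts.
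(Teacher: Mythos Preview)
The paper does not prove Mostow rigidity at all: this is a survey, and the theorem is simply stated as a known deep result, with the text moving on immediately to its consequences (``This deep theorem has the important consequence\ldots''). So there is no ``paper's own proof'' to compare against.

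That said, your outline is the classical Mostow (and, for the noncompact finite-volume case, Mostow--Prasad) strategy and is essentially correct as a sketch: homotopy equivalence from asphericity, lift to an equivariant quasi-isometry of $\matH^n$, boundary extension via Morse stability, quasi-conformality of the boundary map, the ergodicity-plus-invariance argument to kill the eccentricity, Liouville for $n-1\geq 2$, and descent. Your identification of the two delicate points---the conformality upgrade and the cusp bookkeeping---is accurate. One small caution: the ``direction of maximal stretch'' heuristic is a slight oversimplification, since at a generic point the derivative is a linear map whose distortion is an ellipsoid rather than a single line; the rigorous versions argue either that the maximal dilatation is an a.e.\ constant $\Gamma_1$-invariant function (hence constant by ergodicity) and then force it to equal $1$, or pass through Gromov's simplicial-volume argument instead. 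But as a roadmap your proposal is sound and matches the standard literature (e.g.\ the treatments in \cite{lectures,ratcliffe,bible} cited by the paper).
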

This deep theorem has the important consequence that any geometric invariant of a hyperbolic
manifold, such as the volume or the length of the shortest geodesic for a closed one,
is automatically a topological invariant. To state the next result, we need to recall
that performing a \emph{Dehn filling} of a torus boundary component $T$ of a compact
$3$-manifold $M$ consists in gluing to $M$ the solid torus $D^2\times S^1$ along
a homeomorphism $\partial(D^2\times S^1)\to T$. The result of this operation depends
only on the \emph{slope} on $T$ that becomes contractible in the attached
$D^2\times S^1$, namely on the isotopy class on $T$ of the simple non-trivial curve $f(S^1\times\{*\})$.
If $M$ has several boundary component we will call \emph{Dehn filling} of $M$ any manifold
obtained by performing this operation on some (possibly all) of the toric components of $\partial M$.
The next general result shows that in dimension three, given a cusped hyperbolic manifold,
one can produce a wealth of new ones:

\begin{thm}[Thurston's hyperbolic Dehn filling]\label{filling:thm}
Let $M$ be a finite-volume hyperbolic $3$-manifold with cusps based on tori $T_1,\ldots,T_k$.
Then for $j=1,\ldots,k$ there exists a set finite $E_j$ of slopes on $T_j$ such that
every Dehn filling of $M$ performed along slopes $\alpha_1,\ldots,\alpha_k$ with
$\alpha_j\not\in E_j$ is hyperbolic.
\end{thm}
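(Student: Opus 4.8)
The plan is to follow Thurston's deformation-theoretic approach, centered on the holonomy representation of the complete structure and the controlled degeneration of its peripheral (cusp) subgroups. Let $\rho_0\colon\pi_1(M)\to\mathrm{Isom}^+(\matH^3)=\mathrm{PSL}(2,\matC)$ be the holonomy of the complete finite-volume structure carried by the interior of $M$. Each cusp torus $T_j$ determines a peripheral subgroup $\pi_1(T_j)\cong\matZ^2$, generated by a meridian--longitude pair $(\mu_j,\lambda_j)$, and $\rho_0$ sends all of $\pi_1(T_j)$ into a rank-$2$ parabolic group fixing a single point of $\partial\matH^3$. The strategy is to deform $\rho_0$ so that the peripheral groups become loxodromic about a common axis, and to arrange that the holonomy of a chosen filling slope $\alpha_j=p_j\mu_j+q_j\lambda_j$ becomes trivial, so that the developed structure closes up across the solid torus glued along $\alpha_j$.

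The core of the argument is a local parametrisation of the deformation space: the character variety $X(M)$ of $\mathrm{PSL}(2,\matC)$-representations is smooth of complex dimension $k$ near $[\rho_0]$, and the map recording the complex translation lengths of $\mu_1,\ldots,\mu_k$ is a local biholomorphism onto a neighborhood in $\matC^k$. First I would pin down the dimension count cohomologically: twisting by $\mathrm{Ad}\,\rho_0$, the boundary $\partial M=T_1\sqcup\cdots\sqcup T_k$ contributes $\dim_\matC H^1(\partial M;\mathfrak{sl}_2\matC)=2k$, and the ``half lives half dies'' duality, combined with the Calabi--Weil infinitesimal rigidity of the cusped interior, forces the restriction $H^1(M)\to H^1(\partial M)$ to be injective with $k$-dimensional image; hence $\dim_\matC H^1(M;\mathrm{Ad}\,\rho_0)=k$. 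Equivalently, in Thurston's concrete picture one fixes an ideal triangulation of $M$ (which exists by Theorem~\ref{bounded:calculus}), assigns a complex modulus to each tetrahedron, and checks that the gluing equations cut out a smooth $k$-dimensional variety on which the cusp log-holonomies serve as coordinates, the nondegeneracy being governed by the Neumann--Zagier symplectic relations.

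On this deformation space I would then introduce Thurston's generalized Dehn surgery coordinates $(p_j,q_j)\in\matR^2\cup\{\infty\}$, where $\infty$ marks the complete structure and a finite point records the relation that $p_j\mu_j+q_j\lambda_j$ has trivial rotational part; the local biholomorphism above shows these coordinates give a homeomorphism from a neighborhood of the complete structure onto a neighborhood of $(\infty,\ldots,\infty)$. For coprime integers $(p_j,q_j)$ --- that is, honest slopes $\alpha_j$ --- the corresponding representation kills $\alpha_j$ and so defines an (a priori incomplete) hyperbolic structure on $M$ whose metric completion is obtained by adjoining the core geodesics of the filling solid tori, yielding a complete structure on the filled manifold; the Ehresmann--Thurston openness principle guarantees that these nearby deformations stay discrete and faithful. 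Finally, the integer lattice points $(p_j,q_j)$ leave every bounded region and accumulate only at $\infty$, so all but finitely many slopes on $T_j$ fall into the good neighborhood, and collecting the finitely many exceptions into $E_j$ completes the proof.

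The hard part will be the smoothness-and-nondegeneracy assertion of the second step: proving that the peripheral holonomies genuinely parametrise the deformations, i.e.\ that the differential of the restriction-to-boundary map is an isomorphism. In the cohomological route this is the delicate interplay of Poincar\'e--Lefschetz duality with the rigidity of the complete cusped structure; in the triangulation route it is the non-vanishing of the relevant Jacobian of the gluing equations, controlled by the Neumann--Zagier bilinear form. A secondary but essential point is to verify that the deformed structures remain non-degenerate --- all tetrahedra positively oriented and the holonomy discrete --- which follows from the openness principle but must be confirmed in order to conclude that the resulting filled manifolds are genuinely hyperbolic.
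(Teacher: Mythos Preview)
The paper does not actually prove Theorem~\ref{filling:thm}: this is a survey, and the theorem is simply stated, with the only gesture towards a proof being the remark after Proposition~\ref{compl:eq:prop} that the Neumann--Zagier result (the complete structure is a smooth point on a $k$-dimensional deformation variety) ``can be exploited for instance to establish Theorem~\ref{filling:thm}.'' Your outline is precisely the standard Thurston argument that this remark alludes to, and in that sense it is fully aligned with what the paper has in mind; there is nothing to compare beyond that.

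Two caveats on your outline itself. First, when you invoke Theorem~\ref{bounded:calculus} to obtain an ideal triangulation and then run the gluing-equation argument, be aware that Theorem~\ref{bounded:calculus} only produces a \emph{topological} ideal triangulation; the paper explicitly warns (in the paragraph following Proposition~\ref{compl:eq:prop} and again in the paragraph on existence of hyperbolic ideal triangulations) that it is \emph{not known} whether every cusped hyperbolic $M$ admits a genuinely positively-oriented geometric ideal triangulation. So the triangulation route, taken literally, has a gap; one either falls back on the cohomological/character-variety argument you also sketch, or one works with partially flat or negatively oriented tetrahedra as in~\cite{PePo,PeWe}. Second, your appeal to Ehresmann--Thurston for ``discrete and faithful'' is slightly off: that principle gives openness of the set of holonomies of (possibly incomplete) hyperbolic structures, not discreteness or faithfulness of the deformed representations of $\pi_1(M)$ (indeed these kill $\alpha_j$). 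The correct statement is that the deformed structure on $M$ is incomplete, and its metric completion adds a short closed geodesic per cusp, yielding a genuine complete hyperbolic structure on the filled manifold; discreteness and faithfulness then pertain to $\pi_1$ of the \emph{filled} manifold.
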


Note that the theorem includes the case of the ``empty'' filling of some cusp (or several ones),
that leaves the cusp as is. We also remark in passing that one can define a
natural topology on the space of hyperbolic manifolds and that taking a sequence
of fillings of $M$ in which on each cusp the length of the slope (defined for
instance as the norm of its coordinates with respect to some fixed homological basis)
tends to infinity, one gets a sequence of hyperbolic manifolds converging to $M$,
with volumes converging from below to that of $M$.

\paragraph{Hyperbolic manifolds with geodesic boundary}
When a compact $3$-manifold $M$ has boundary components which are not tori,
one has no hope to construct on it or on its interior a finite-volume hyperbolic
structure (an infinite-volume non-rigid one often exists, but this is a
completely different story). In this case one allows the boundary of $M$ to be
part of the hyperbolic structure, in the form of a totally geodesic surface.
To explain the matter in detail, we again temporarily remove the restriction
that manifolds should be compact, and consider an arbitrary one $N$,
possibly non-compact and with boundary, with the boundary itself possibly non-compact.
We then say that $N$ is \emph{hyperbolic with geodesic boundary} if it has a complete finite-volume Riemannian
structure locally modeled on open subsets of a half-space in hyperbolic space $\matH^3$.
Mirroring $N$ in its boundary we get the double $D(N)$ of $N$, which is hyperbolic
without boundary, so its universal cover can be identified to $\matH^3$.
Moreover $\partial N$ is a totally geodesic surface in $D(N)$, and
the universal cover of $N$ can be identified to the closure of any connected component in $\matH^3$
of the complement of the
family of disjoint planes in $\matH^3$ that project in $D(N)$ onto $\partial N$.
This allows the following alternative description of a hyperbolic structure with geodesic boundary:
\begin{itemize}
\item A hyperbolic structure with geodesic boundary on $N$ corresponds to a realization of $N$
as the quotient of the intersection $H$ of a family of half-spaces in $\matH^3$
under the action of a discrete and torsion-free group of isometries of $\matH^3$ that leave $H$ invariant.
\end{itemize}

Let us now describe the thin part of a finite-volume hyperbolic $3$-manifold $N$ with geodesic
boundary. Since $D(N)$ is finite-volume hyperbolic without boundary, for $\varepsilon$ less than the third
Margulis constant the $\varepsilon$-thin part of $D(N)$ consists of cusps based on tori.
Each such cusp is either disjoint from $\partial N$, in which case it gives rise to a toric cusp
in $N$, or it is cut into two symmetric pieces by $\partial N$. It is then not too
difficult to see that the corresponding portion of the thin part of $N$ is an \emph{annular cusp},
namely of type $A\times [0,+\infty)$, with $A$ a Euclidean annulus obtained by gluing together
two opposite sides of a rectangle.

This discussion implies that a finite-volume hyperbolic $3$-manifold $N$ with geodesic boundary compactifies to a certain $M$
with a specified family of closed annuli $\calA$ on $\partial M$, so that $N$ is given by $M$ minus
$\calA$ and the toric components of $\partial M$. Note that $\partial M$ cannot contain spheres and no annulus
in $\calA$ can lie on a toric component of $\partial M$.

In the sequel we will sometimes
speak with a slight abuse of a hyperbolic compact $(M,\calA)$ to mean that a
(complete and finite-volume, as always) hyperbolic metric is defined on $M$ minus the union
of $\calA$ and all the toric boundary components of $\partial M$.

Hyperbolic structures with geodesic boundary still enjoy Mostow rigidity, but only in the sense
that each manifold can carry at most one such structure up to isometry: it is not true
in this context that the fundamental group determines the structure, as shown by Frigerio~\cite{Frigerio:nonrig}.

\paragraph{Links, orbifolds, and graphs}
For a link-pair $(M,L)$ with closed $M$ a hyperbolic structure is simply one on the exterior
of $L$ in $M$, with one cusp for each component of $L$.

Turning to a $3$-orbifold, recall that the
finite local action on $\matR^3$
defining it can be assumed to be orthogonal, up to conjugation, and that
the stabilizer of a point in the group of isometries
of hyperbolic space is the orthogonal group. The notion of a hyperbolic structure on a closed
$3$-orbifold is then an obvious extension of those already defined:
it is a complete finite-volume
singular Riemannian metric locally given by the quotient of an open ball in $\matH^3$ under
a finite action of isometries fixing the center of the ball.
Versions of the definition for orbifolds with cusps and/or with boundary exist but will
not be referred to below.

For a graph-pair $(M,G)$ we will consider
three different types of hyperbolic structure:
\begin{itemize}
\item With \emph{totally geodesic boundary}: an ordinary hyperbolic structure on the exterior $X$ of $G$ in $M$;
note that the knot components of $G$ give rise to toric cusps, whereas components with vertices give compact components of the boundary;
\item Of \emph{orbifold type}: an orbifold hyperbolic structure on $M$ with some admissible
labelling of the edges of $G$ by integers;
\item With \emph{parabolic meridians}: a hyperbolic structure on $(X,\calA)$, where $X$ is the exterior of $G$ in $M$
and $\calA$ is a system of meridinal annuli of the edges of $G$; note that for such a structure there is one
toric cusp for each component of $G$, one annular cusp for each edge joining two vertices
(or a vertex to itself), and
one thrice punctured sphere of geodesic boundary for each vertex of $G$.
\end{itemize}

\paragraph{Hyperbolisation}
So far we have not explain for what reason one should hope a $3$-dimensional manifold
(or graph, or orbifold) to have a hyperbolic structure. We now discuss the obstructions
to the existence of such a structure and state the extremely deep results according to which
the absence of these obstructions is actually sufficient to guarantee hyperbolicity.
To begin, we recall that an \emph{essential surface} in a $3$-manifold $M$ is
a properly embedded one whose fundamental group, under the inclusion,
injects into that of $M$, and which
is not parallel to the boundary. It is not too difficult to show that a hyperbolic manifold cannot
contain essential surfaces with non-negative Euler characteristic (that is, spheres, tori,
discs, or annuli). The following result has first been proved by Thurston~\cite{Haken:hyp}
for Haken manifolds (those containing some essential surface), remained as a conjecture for a long time,
and was eventually established by Perelman~\cite{perelman1,perelman2,perelman3} (see also~\cite{BBMMP}):

\begin{thm}
If a compact $3$-manifold $M$ with (possibly empty) boundary consisting of tori
does not contain any essential surface with non-negative Euler characteristic then $M$
is either hyperbolic or a Dehn filling of $P\times S^1$, where $P$ is the $2$-sphere minus
three open discs.
\end{thm}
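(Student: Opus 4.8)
The plan is to read off from the hypothesis the standard tameness conditions and then invoke the two complementary hyperbolisation engines --- Thurston's for the Haken case and Perelman's Ricci flow for the general case --- organising everything around the dichotomy of whether $M$ contains an incompressible surface. First I would translate the assumption ``no essential surface of non-negative Euler characteristic'' into the familiar language. Ruling out essential spheres makes $M$ irreducible; ruling out essential discs makes $\partial M$ incompressible; ruling out essential tori makes $M$ atoroidal; ruling out essential annuli makes $M$ acylindrical. Thus $M$ is a compact, orientable, irreducible, atoroidal, acylindrical manifold with incompressible toral boundary, and any essential surface it does contain necessarily has $\chi<0$.

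I would then split into the Haken and the non-Haken cases. Suppose first that $M$ is Haken, i.e.\ contains a two-sided incompressible surface $\Sigma$ (of negative Euler characteristic). Cutting along $\Sigma$ and iterating produces a finite hierarchy terminating in a union of balls. Following Thurston, one builds the hyperbolic structure by reverse induction along the hierarchy: the cut-open manifold is hyperbolic by the inductive hypothesis, and the problem becomes whether the structure can be reglued across $\Sigma$. This regluing is governed by the skinning map on the Teichm\"uller space of $\Sigma$, and the existence of a complete structure reduces to finding a fixed point of that map. The analytic core is Thurston's double limit theorem together with compactness of the deformation space; since $M$ is acylindrical the skinning map is strictly contracting, which forces a (unique) fixed point. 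The only alternative to a genuine hyperbolic structure arising here is that $M$ is Seifert fibered, and an atoroidal, acylindrical Seifert fibered space fibers over $S^2$ with at most three exceptional fibers, hence is obtained by Dehn filling the three boundary tori of $P\times S^1$ --- the exceptional case of the statement.

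If instead $M$ is non-Haken, the hierarchy argument is unavailable and I would appeal to the Hamilton--Perelman geometrisation theorem, proved by running the Ricci flow with surgery and controlling its singularities through Perelman's no-local-collapsing and canonical-neighbourhood estimates. Geometrisation yields a decomposition of $M$ along essential spheres and tori into geometric pieces; irreducibility and atoroidality make this decomposition trivial, so $M$ is itself a single geometric piece. The atoroidal, acylindrical geometries are precisely the hyperbolic geometry and the small Seifert fibered ones, and the latter, fibering over $S^2$ with at most three exceptional fibers, are again exactly the Dehn fillings of $P\times S^1$. This produces the stated dichotomy in both cases.

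The main obstacle is that both inductive steps are genuinely deep theorems rather than routine verifications, and neither can be shortcut. In the Haken case it is the fixed-point theorem for the skinning map --- equivalently the compactness delivered by the double limit theorem --- that carries the entire weight; in the non-Haken case it is the analytic control of the Ricci-flow singularities and the exclusion of collapse that is the heart of the matter. A self-contained survey-level argument is therefore out of reach, and I would present these two results as black boxes, with the topological bookkeeping above serving only to reduce the theorem to them.
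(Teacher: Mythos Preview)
The paper does not actually prove this theorem: it is stated as a deep result, attributed to Thurston in the Haken case and to Perelman in general, with a parenthetical remark invoking Casson--Jungreis to explain why the Dehn fillings of $P\times S^1$ appear as exceptions. Your proposal is a faithful expansion of exactly that attribution --- the Haken/non-Haken split, Thurston's hierarchy and skinning-map argument on one side, Ricci flow with surgery on the other --- and is correct at the level of a roadmap.

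One point worth sharpening: your account of the Seifert exception is slightly off-target in both branches. In the Haken branch, Thurston's theorem for an irreducible, atoroidal, acylindrical Haken manifold with (incompressible) toral boundary yields hyperbolicity outright; no Seifert alternative arises there. The exceptional manifolds are closed, and the reason they slip through is not that geometrisation offers ``hyperbolic or small Seifert'' as parallel outcomes, but that the hypothesis only excludes \emph{embedded} essential tori, whereas the genuine obstruction to hyperbolicity is a $\mathbb{Z}\oplus\mathbb{Z}$ in $\pi_1$, i.e.\ an \emph{immersed} one. The paper singles out Casson--Jungreis precisely because their theorem identifies the manifolds with an immersed but no embedded essential torus as exactly the Dehn fillings of $P\times S^1$. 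Folding that observation into your non-Haken paragraph would make the logic of the exception transparent rather than leaving it as a classification of atoroidal geometries.
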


(The reason why Dehn fillings of $P\times S^1$ make an exception is that
they are the only manifolds containing a $\pi_1$-injective \emph{immersed} torus
but no \emph{embedded} essential one, thanks to a result of Casson and Jungreis~\cite{CaJu}.)

The philosophy underlying the previous theorem is that cutting a manifold along a
surface with non-negative Euler characteristic one gets a (possibly disconnected) simpler one,
from which the original manifold can be reconstructed. Therefore hyperbolic manifolds and
Dehn fillings of $P\times S^1$ can be viewed as building blocks for general $3$-manifolds.

Hyperbolization holds true, with the necessary adjustments, for manifolds with more general
boundary (and annuli on this boundary), see~\cite{fp}, and for orbifolds
(which requires in particular the introduction of the notion of essential $2$-suborbifold), see~\cite{BoLePo,CoHoKe}.

An important consequence of the hyperbolization theorem is that if a graph-pair $(M,G)$ admits a hyperbolic structure
with totally geodesic boundary on its exterior
then for any admissible labelling of the edges, which turns $(M,G)$ into an orbifold, $(M,G)$ admits
a corresponding orbifold hyperbolic structure, and that if for some labelling of the edges
$(M,G)$ admits an orbifold hyperbolic structure then it admits one with parabolic meridians.

\section{Cusped manifolds}
We will now describe the algorithmic approach to the construction and recognition
of cusped hyperbolic manifolds, carried out with extreme success by
Callahan, Hildebrandt and Weeks~\cite{CaHiWe}.

\paragraph{Hyperbolic ideal tetrahedra}
Let us start from a compact $3$-manifold $M$ with non-empty boundary consisting of tori,
and from an ideal triangulation $\calT$ of $M$. The idea to hyperbolize $M$, which dates back to Thurston~\cite{bible},
is to choose a hyperbolic shape separately for each tetrahedron in $\calT$ and then to ensure
consistency and completeness of the structure induced on $M$. To spell out this idea we begin by
defining a \emph{hyperbolic ideal
tetrahedron} as the convex envelope $\Delta$ in $\matH^3$ of four non-aligned points in $\partial\matH^3$, endowed
with the orientation induced by $\matH^3$.
(Recall that three points on $\partial\matH^3=\matP^1(\matC)$ are always aligned, namely
there exists a geodesic plane having all three of them as points at infinity.)
Intersecting $\Delta$ with a small enough horosphere centered at any of
its vertices, one gets a Euclidean triangle, which gets rescaled if the horosphere is shrunk.
Moreover one can see that two triangles lying on horospheres centered at distinct vertices have the
same angle at the edge of $\Delta$ joining these vertices, which implies that the four triangles
at the vertices of $\Delta$ are actually similar to each other, so $\Delta$ determines a similarity class
of an oriented triangle in the plane, and the converse is also true.

To be more specific, let us note that the oriented isometries of $\matH^3$ act in a triply transitive way
on $\partial\matH^3$, so without loss of generality we can assume in the half-space model $\pi^3_+$
viewed as $(0,+\infty)\times\matC$,
that a positively oriented triple of vertices of $\Delta$ is $(0,1,\infty)$.
This implies that the fourth vertex is some $z\in\matC$ with $\Im(z)>0$, namely $z\in\pi^2_+$.
Then the hyperbolic structure of $\Delta$ is determined by $z$, that we will call \emph{module} of
$\Delta$ along the edge $(0,\infty)$, see Fig.~\ref{hyptetra:fig}.
    \begin{figure}
    \begin{center}
    \includegraphics[scale=.45]{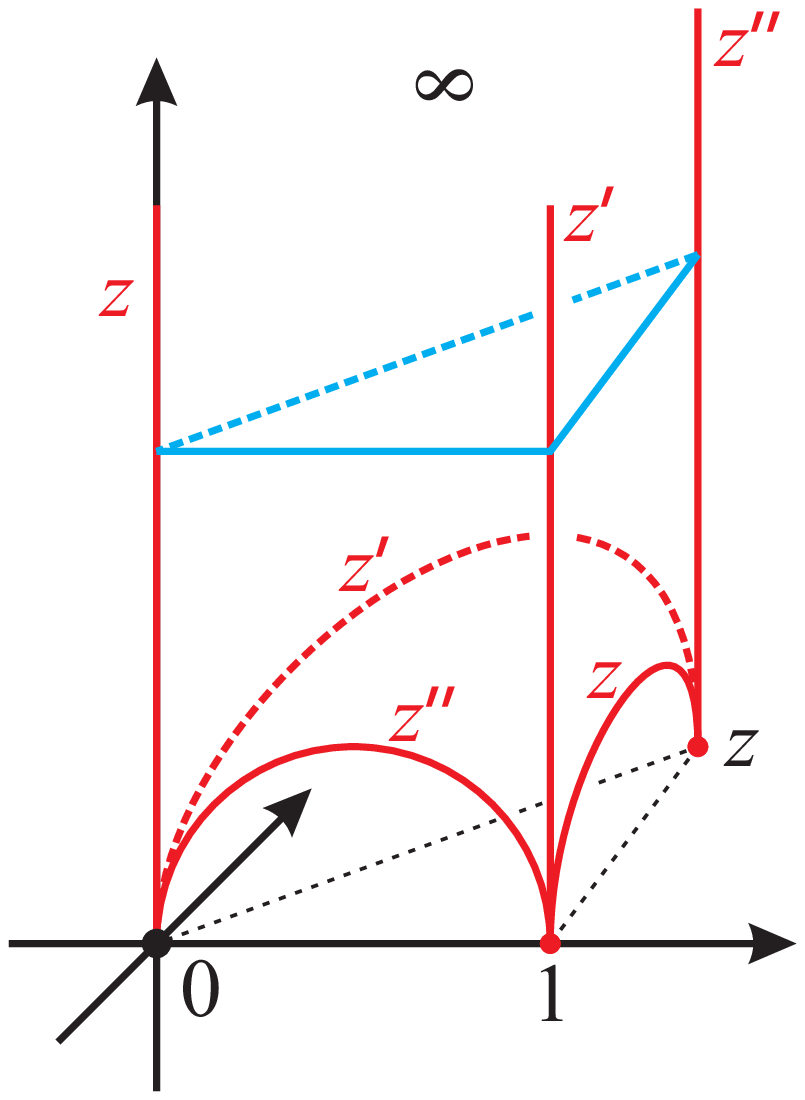}
    \mycap{Modules of a hyperbolic ideal tetrehedron.}
    \label{hyptetra:fig}
    \end{center}
    \end{figure}
Moreover the modules of $\Delta$ along the other edges are as shown in the figure,
with $z'=\frac1{1-z}$ and $z''=1-\frac1z$. In particular, $\Delta$ has the same module
along any two edges opposite to each other. And, converseley, once an orientation and
a pair of opposite edges have been fixed on an abstract tetrahedron, the
choice of any $z\in\pi^2_+$ turns the tetrahedron into an ideal hyperbolic one as in Fig.~\ref{hyptetra:fig}.

\paragraph{Consistency and completeness}
Let us return to our $M$ ideally triangulated by $\calT$, and assume that there are $n$ tetrahedra
$\Delta_1,\ldots,\Delta_n$ and $k$
toric boundary components $T_1,\ldots,T_k$. Choosing a hyperbolic structure on $\Delta_1,\ldots,\Delta_n$ corresponds to choosing
$z_1,\ldots,z_n\in\pi^2_+$, that we can view as variables. Using again the fact that the isometries
of $\matH^3$ act in a triply transitive way on $\partial\matH^3$, it is now easy to see that for any
choice of $z_1,\ldots,z_n$ the hyperbolic structure on the tetrahedra extends to the interior
of the glued faces in $M$. We then have the following:

\begin{prop}[Consistency equations]
The hyperbolic structure defined by $z_1,\ldots,z_n$ extends along an edge $e$ of $\calT$ in $M$
if and only if the product of all the modules of $\Delta_1,\ldots,\Delta_n$ (counted with multiplicity if
$e$ is multiply adjacent to some $\Delta_j$) equals $1$, and the sum of the arguments of these modules
equals $2\pi$.
\end{prop}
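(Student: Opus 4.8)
The plan is to analyze the geometry around a single edge $e$ of $\mathcal{T}$ by examining the link of that edge, which is a disjoint union of the small Euclidean triangles obtained by intersecting each incident tetrahedron with a horosphere. First I would set up coordinates: since the oriented isometries of $\matH^3$ act triply transitively on $\partial\matH^3$, I can place the two endpoints of the lifted edge $e$ at $0$ and $\infty$ in the half-space model $\pi^3_+$. Under this normalization the horospherical link of $e$ lives in a horizontal copy of $\matC$, and each tetrahedron $\Delta_j$ incident to $e$ contributes a Euclidean triangle whose angle at the relevant vertex is $\arg(z_j)$ (or the corresponding argument of $z_j'$ or $z_j''$, depending on which pair of edges of $\Delta_j$ the edge $e$ realizes), and whose complex ``shape'' as a similarity class is exactly the appropriate module.

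**Next I would track the developing map** as one walks around $e$. Going from one tetrahedron to the next across a shared face, the gluing is by an orientation-reversing face pairing lifted to an orientation-preserving isometry of $\matH^3$; in the link this becomes a Euclidean similarity of $\matC$ that fixes the vertex at $0$ (the foot of $e$). Such a similarity is multiplication by a complex number, and composing the similarities around the full cycle of tetrahedra incident to $e$ multiplies together precisely the modules $z_j$ (with multiplicity, if $e$ meets some $\Delta_j$ more than once). The structure extends continuously and without a cone singularity across $e$ exactly when, after going once around, the developing map returns to itself: the total holonomy similarity must be the identity. This single complex condition is that the product of the modules equals $1$.

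**The key point, and the step I expect to be the main obstacle,** is disentangling the two real conditions packaged in the complex equation $\prod z_j = 1$, and showing they capture genuinely different geometric requirements. The equality of the \emph{arguments} to $2\pi$ (rather than merely to $0 \bmod 2\pi$) is the condition that the link triangles close up around $e$ filling the angle exactly once with no overlap or gap — i.e.\ that the total dihedral angle around $e$ is $2\pi$ and not $2\pi m$ for some other integer $m$. The equality of the \emph{moduli} of the product to $1$ is then the condition that the similarity has no scaling part, so that the two ends of the link genuinely match up rather than being related by a dilation. I would argue that, because each triangle angle $\arg(z_j)$ lies in $(0,\pi)$ and the triangles are glued consecutively without overlap as one develops, the sum of arguments is automatically a positive multiple of $2\pi$; a separate geometric argument (for instance, that the link of $e$ is a single circle, or a convexity/embeddedness observation on the developed picture) pins the multiple to exactly $1$. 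Care is needed here because a priori $\arg$ is only defined modulo $2\pi$, so the honest statement involves the genuine angle sum rather than principal arguments, and making this precise is where the real work lies.

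**Finally I would note** that the converse direction is immediate: if both conditions hold, the link triangles assemble into a Euclidean neighborhood of the foot of $e$ isometric to a disc, and the developing map closes up with trivial holonomy, so the hyperbolic structures on the tetrahedra patch together to a smooth hyperbolic structure in a neighborhood of the interior of $e$. Since we already know from triple transitivity that the structure extends across the open $2$-faces, the edge conditions are exactly what is needed to complete the structure on all of $M$ minus its vertices.
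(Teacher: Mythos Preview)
The paper does not actually prove this proposition; it is stated as a standard fact (it goes back to Thurston's notes and is treated in detail in, e.g., \cite{lectures}). Your overall approach via the developing map and the holonomy around the edge is the correct and standard one, and your setup (sending the endpoints of $e$ to $0$ and $\infty$, reading the holonomy as multiplication by the product of the modules) is exactly right.

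There is, however, a genuine confusion in your ``key point'' paragraph. You write that a separate geometric argument ``pins the multiple to exactly $1$,'' suggesting that the condition $\sum\arg z_j=2\pi$ (rather than $4\pi$, $6\pi$, \dots) might be automatic once $\prod z_j=1$. It is not. For a \emph{single} edge, both conditions in the proposition are genuinely independent: one can perfectly well have a cycle of ideal tetrahedra around an edge with $\prod z_j=1$ and $\sum\arg z_j=4\pi$, and then the structure does \emph{not} extend (one gets a cone singularity of angle $4\pi$). The fact that the link of $e$ is a single circle is purely combinatorial and puts no constraint on the total dihedral angle. So there is nothing to ``pin'': the argument-sum condition is simply the hypothesis that must be checked, and your proof of the proposition should stop at the observation that extension across $e$ is equivalent to trivial holonomy \emph{and} total angle $2\pi$.

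What you may be remembering is the \emph{global} statement in the Remark immediately following the proposition: if the product condition holds at \emph{every} edge simultaneously, then an Euler-characteristic count (using $\chi(M)=0$) forces the argument sum to be exactly $2\pi$ at every edge. That is a separate (and global) fact, not part of the proof of the proposition itself.
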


\begin{rem}
\emph{If the product of the modules along an edge equals $1$, then the sum of the
arguments of these modules is a positive multiple of $2\pi$. Using this fact
and the observation that $\chi(M)=0$, because $\partial M$ consists
of tori, one then sees that if the products of the modules along \emph{all} edges of
$\calT$ equals $1$, then the sum of the arguments always equals $2\pi$. This implies that consistency
of the hyperbolic structure defined by $z_1,\ldots,z_n$ translates into $n$ algebraic equations.
(See also below for the number of these equations.)}
\end{rem}

Suppose now that $z_1,\ldots,z_n$ satisfy the consistency equations along all the edges of $\calT$.
Then each boundary torus $T_j$ is obtained by gluing Euclidean triangles along similarities, and
consistency ensures that the similarity structure on the triangles extends to the edges and the vertices.
Summing up, $z_1,\ldots,z_n$ induce a similarity structure on each $T_j$, and we have:

\begin{prop}\label{compl:prop}
The hyperbolic structure on $M$ defined by $z_1,\ldots,z_n$ is complete if and only if
the induced similarity structure on each $T_j$ is actually Euclidean.
\end{prop}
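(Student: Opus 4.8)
The plan is to localize completeness at the cusps and then, for each cusp, to match the three-dimensional holonomy of $\pi_1(T_j)$ with the two-dimensional similarity holonomy of the structure induced on $T_j$. Since $\mathrm{int}(M)$ is the union of a compact core together with ends homeomorphic to $T_j\times[0,+\infty)$, a Cauchy sequence in $\mathrm{int}(M)$ either stays in a compact set, where it converges by local completeness, or else escapes into one of the ends. Hence the structure is complete if and only if each cusp end is metrically complete, and it suffices to treat one $T_j$ at a time. (Note that I avoid invoking the Margulis lemma here, since that presupposes the completeness we are trying to establish.)

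First I would develop a neighborhood of the ideal vertex corresponding to $T_j$. After choosing a lift, the ideal vertex develops to a point $p\in\partial\matH^3$, normalized to $\infty$ in the half-space model $\pi^3_+=(0,+\infty)\times\matC$. Any $\gamma\in\pi_1(T_j)$, traced around the cusp, keeps the developed image near $p$, so the holonomy $\rho(\gamma)$ fixes $\infty$ and is a complex affine map $z\mapsto a(\gamma)z+b(\gamma)$ on $\partial\matH^3\setminus\{\infty\}=\matC$. The essential observation is that this affine map is exactly the holonomy of the induced similarity structure on $T_j$: the horospherical cross sections develop into the horizontal planes carrying the Euclidean cusp metric, and the similarity gluings recorded by the tetrahedron modules are precisely the restrictions of the $\rho(\gamma)$ to these planes. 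In particular the dilation homomorphism $a\colon\pi_1(T_j)\to\matC^*$ is the derivative of the similarity holonomy, and the structure on $T_j$ is Euclidean exactly when $a\equiv 1$.

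For one direction, suppose the similarity structure on $T_j$ is Euclidean. The holonomy of a flat torus consists of translations, so each $\rho(\gamma)$ is a pure horizontal translation, i.e.\ a parabolic fixing $\infty$. These translations preserve each horosphere $\{x_3=c\}$, and the quotient of the horoball $\{x_3\geq c\}$ by $\rho(\pi_1 T_j)$ is isometric to the standard cusp $T_j\times[c,+\infty)$, which is metrically complete. For the converse I would argue contrapositively: if the structure on $T_j$ is not Euclidean, then its dilation homomorphism $a$ is non-trivial, for if $a$ took values in the unit circle the holonomy would consist of Euclidean isometries, making $T_j$ a closed flat torus and forcing $a\equiv 1$. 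Hence some $|a(\gamma)|\neq 1$, so $\rho(\gamma)$ is loxodromic, of the form $(z,t)\mapsto(a(\gamma)z+b(\gamma),|a(\gamma)|\,t)$. Iterating $\gamma$ along a ray running out the end then yields a path of finite length whose developed image spirals toward the finite fixed point of $\rho(\gamma)$ on $\partial\matH^3$; this is a non-convergent Cauchy sequence, so the end is incomplete.

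The main obstacle I expect is the identification carried out in the second paragraph: rigorously realizing the three-dimensional holonomy of $\pi_1(T_j)$ as the similarity holonomy of the developed cross section, and checking that the common fixed point $p$ at infinity is well defined independently of the chosen lift. Once this dictionary between the affine maps $z\mapsto a(\gamma)z+b(\gamma)$ and the cusp geometry is in place, both the explicit product-cusp model in the complete case and the spiraling Cauchy sequence in the incomplete case follow from direct estimates in the half-space model.
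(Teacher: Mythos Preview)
The paper does not actually supply a proof of this proposition: it is a survey, and the result is simply stated as a known fact, with the details implicitly deferred to Thurston's notes~\cite{bible} and to the textbooks~\cite{lectures,ratcliffe}. So there is no argument in the paper to compare yours against. That said, your outline is exactly the classical one found in those references: localize completeness at the ends, develop near the ideal vertex with the shared fixed point placed at $\infty$, identify the restriction of the holonomy to $\pi_1(T_j)$ with the affine holonomy of the induced similarity structure on the horospherical cross-section, and then split into the parabolic (Euclidean, complete cusp) and loxodromic (non-Euclidean, incomplete) cases. Your observation that $a(\pi_1 T_j)\subset S^1$ already forces $a\equiv 1$, because commuting orientation-preserving Euclidean isometries of $\matC$ covering a closed torus must be translations, is the right way to rule out the ``purely rotational'' case.

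One small imprecision is worth correcting in your incomplete direction. The horospherical spiral you build by iterating $\gamma$ does have finite hyperbolic length, but its developed image does \emph{not} tend to the boundary fixed point $z_0\in\partial\matH^3$ of $\rho(\gamma)$: any path in $\matH^3$ approaching a point of $\partial\matH^3$ has infinite length. What actually happens is that the developed path stays on a single horosphere $\{x_3=h_0\}$ and converges to the interior point $(z_0,h_0)$ lying on the \emph{axis} of $\rho(\gamma)$; the successive turns have horizontal extent shrinking by the factor $|a(\gamma)|<1$, so the total length is a convergent geometric series. In $M$ itself the same path climbs to height $h_0/|a(\gamma)|^n$ in the reference tetrahedron after $n$ turns, hence escapes every compact set while remaining Cauchy. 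With this adjustment your argument goes through verbatim.
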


To turn the completeness condition into equations, we note that a similarity structure on a torus
$T$ induces a representation (the \emph{holonomy}) of $\pi_1(T)$ into the group of complex-affine automorphisms of $\matC$.
This representation is well-defined up to conjugation, so its dilation component $\rho:\pi_1(T)\to\matC_*$
is well-defined, and of course $T$ is Euclidean if and only if $\rho$ is identically $1$.
If the similarity structure on $T$ is obtained by gluing triangles with specified modules,
and $\alpha$ is a simplicial loop in the resulting triangulation, one can easily
show that $\rho(\alpha)$ is the product of the modules of the triangles that $\alpha$ leaves to
its left, as suggested in Fig.~\ref{holonomy:fig}. Therefore:
    \begin{figure}
    \begin{center}
    \includegraphics[scale=.45]{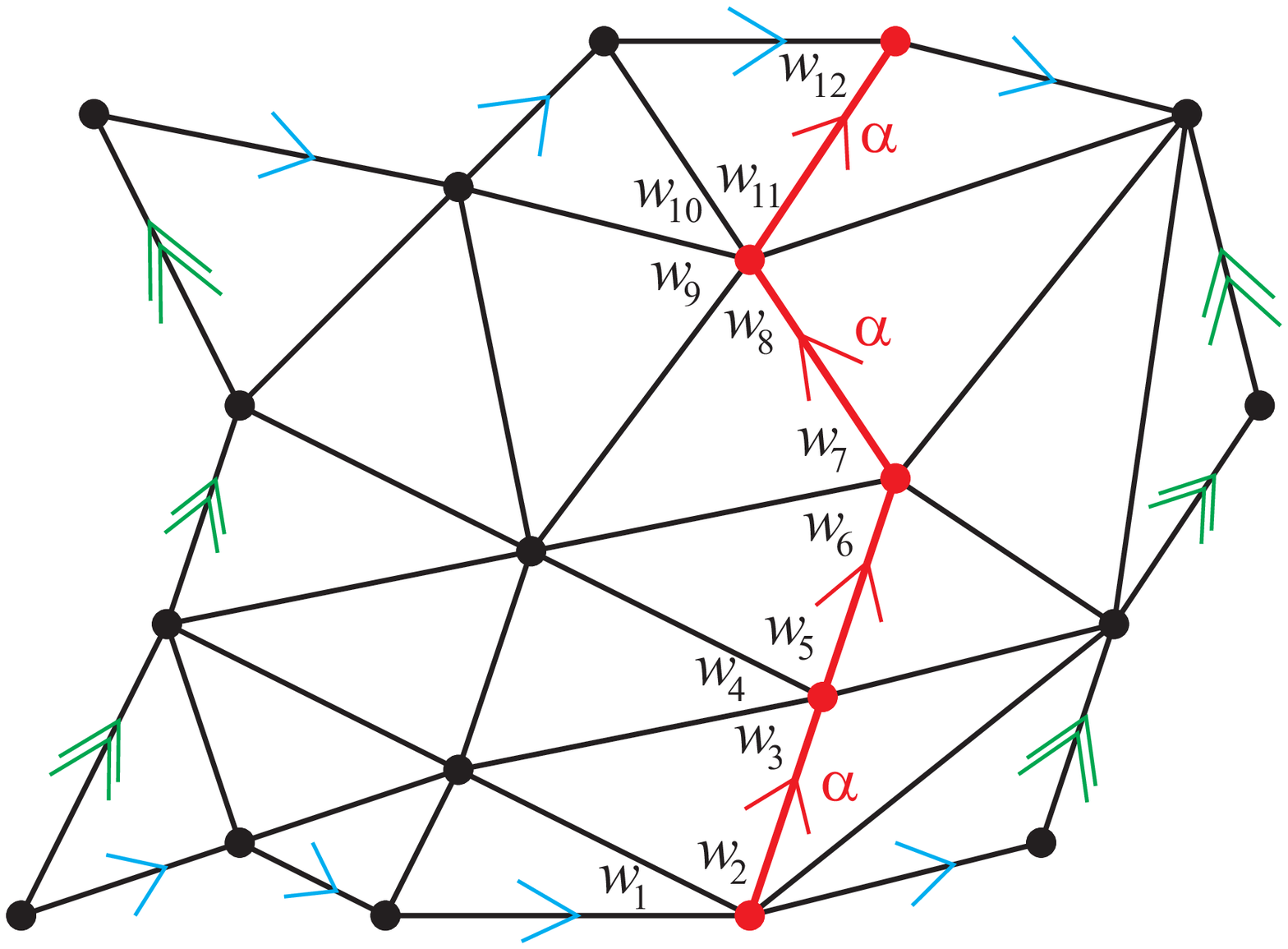}\hspace{4cm}\ $\phantom{.}$\\
    \vspace{-3cm}\hspace{8cm}\qquad $\Rightarrow\ \rho(\alpha)=w_1\cdots w_{12}$\vspace{3cm}\
    \mycap{Computation of the dilation component of the holonomy of a simplicial loop.}
    \label{holonomy:fig}
    \end{center}
    \end{figure}

\begin{prop}[Completeness equations]\label{compl:eq:prop}
For $j=1,\ldots,k$ let $\lambda_j$ and $\mu_j$ be generators of $\pi_1(T_j)$.
The hyperbolic structure on $M$ defined by $z_1,\ldots,z_n$ is complete if and only if
for all $j$ the product of the modules of the triangles on $T_j$ that $\lambda_j$ leaves to its
left equals $1$, and the same happens for $\mu_j$.
\end{prop}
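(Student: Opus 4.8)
The plan is to chain the completeness criterion of Proposition~\ref{compl:prop} together with the holonomy computation recorded in the discussion preceding the statement. By Proposition~\ref{compl:prop}, the structure determined by $z_1,\ldots,z_n$ is complete exactly when each induced similarity structure on $T_j$ is Euclidean, and this in turn holds precisely when the dilation component $\rho_j:\pi_1(T_j)\to\matC_*$ of the holonomy of $T_j$ is identically $1$. Thus the entire task reduces to rewriting the condition $\rho_j\equiv 1$ in terms of the generators $\lambda_j,\mu_j$ and then evaluating $\rho_j$ on these two loops.

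First I would note that $\rho_j$ is a group homomorphism: the holonomy lands in the group of complex-affine maps $z\mapsto az+b$, and passing to the linear part $a$ is multiplicative, since linear parts compose by multiplication. As $\matC_*$ is abelian and $\pi_1(T_j)\cong\matZ^2$ is generated by $\lambda_j$ and $\mu_j$, such a homomorphism is trivial on all of $\pi_1(T_j)$ if and only if it is trivial on the two generators. Hence $\rho_j\equiv 1$ is equivalent to the pair of scalar conditions $\rho_j(\lambda_j)=1$ and $\rho_j(\mu_j)=1$.

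It then remains to make $\rho_j(\lambda_j)$ and $\rho_j(\mu_j)$ explicit, and here I would invoke the product formula illustrated in Fig.~\ref{holonomy:fig}: for a simplicial loop $\alpha$ on the triangulation that $\calT$ induces on $T_j$, the dilation $\rho_j(\alpha)$ is the product of the modules of the triangles that $\alpha$ leaves on its left. Taking $\alpha=\lambda_j$ and $\alpha=\mu_j$ turns the two conditions above into exactly the asserted requirements, which completes the argument.

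I do not anticipate a serious obstacle, since the two substantive ingredients---the equivalence between completeness and Euclideanness of the cusp cross-sections, and the product formula for the dilation of a simplicial loop---are already available. The only points requiring a little care are that $\rho_j$ genuinely be a homomorphism into an \emph{abelian} group, which is what licenses the reduction to generators, and that $\lambda_j,\mu_j$ may be represented by simplicial loops so the product formula applies verbatim; the latter is automatic once one works with the triangulation of $T_j$ cut out by $\calT$.
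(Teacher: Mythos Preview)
Your argument is correct and follows exactly the route the paper takes: the discussion immediately preceding Proposition~\ref{compl:eq:prop} reduces completeness to $\rho_j\equiv 1$ via Proposition~\ref{compl:prop} and then invokes the product formula of Fig.~\ref{holonomy:fig}, with the proposition itself serving as the conclusion of that discussion rather than receiving a separate proof. Your only addition is making explicit why it suffices to check $\rho_j$ on the two generators, which the paper leaves tacit.
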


\begin{rem}
\emph{The images of $\lambda_j$ and $\mu_j$ under the holonomy representation of  $\pi_1(T)$
associated to a similarity structure are commuting complex-affine automorphisms of $\matC$.
The condition $\rho(\lambda_j)=1$ means that the holonomy of $\lambda_j$ is a translation;
if this translation is non-trivial then also $\mu_j$ maps to a translation, therefore
$\rho(\mu_j)=1$. This shows that the two conditions to impose on each $T_j$ are ``almost''
equivalent to each other, so in practice one adds to the $n$ consistency equations only $k$,
and not $2k$, completeness equations. Moreover it was shown by Neumann and Zagier~\cite{NeZa} that if
a solution exists then $k$ of the consistency equations can be dismissed; moreover
the complete structure corresponds to a smooth point in the space of deformations of the
structure, which is a $k$-dimensional algebraic variety. This fact can be exploited for instance to
establish Theorem~\ref{filling:thm}.}
\end{rem}

To conclude the discussion on the construction of the hyperbolic structure on a would-be cusped manifold $M$,
we note that using an arbitrary ideal triangulation $\calT$ of $M$ it is
not true that a solution of the corresponding consistency and completeness equations always exists,
even if $M$ is actually hyperbolic. And, as a matter of fact, it is not even known
that one $\calT$ such that the corresponding equations have a genuine solution exists (despite the wrong
statement in~\cite{lectures} that this follows from~\cite{EpPe}, see also below).
However when one starts from a \emph{minimal} triangulation of a hyperbolic $M$,
namely one with a minimal number of tetrahedra, the solution always exists in practice.
Weeks' wonderful software SnapPea~\cite{SnapPea} is capable (among other things) to find a minimal triangulation of a given
(a priori possibly non-hyperbolic) $M$, to seek for a solution of the corresponding equations, and
also to deduce from patterns it sees in the triangulation the existence of topological obstructions to
hyperbolicity. It is using these features (and the recognition machinery described in the rest of this section)
that the census~\cite{CaHiWe} of cusped manifolds
triangulated by at most $7$ tetrahedra has been obtained.

\paragraph{Canonical decomposition}
Once the hyperbolic structure on a cusped $M$ has been constructed, the need  naturally arises
to \emph{recognize} such an $M$, namely to be able to effectively determine whether $M$ is the same
as any other given cusped manifold. Several hyperbolic invariants, and chiefly the volume
(which is easily computed from a hyperbolic ideal triangulation by means of
the Lobachevski function, see~\cite{milnor}), can often distinguish manifolds,
but different manifolds actually can have the same volume, as proved by
Adams~\cite{Adams}, and other invariants, so the need of
a complete one remains. This complete invariant is provided by a
result of Epstein and Penner~\cite{EpPe}, and it allows to perform the recognition
very efficiently. We will first state this result informally and then provide the necessary details.

The basic underlying idea is best described starting from
an arbitrary compact Riemannian manifold $X$ (of any dimension) with non-empty boundary.
In this case one can define the cut-locus $\cut_X(\partial X)$ of $\partial X$ in $X$
as the set of points joined by more than one distance-minimizing path to $\partial X$.
To visualize $\cut_X(\partial X)$, imagine that we start pushing all the components of $\partial X$ towards
the interior of $X$, all at the same pace. At some point some collision (or self-collision)
will start occurring; we then fuse together the collided points, leave them still henceforth,
and keep pushing the rest. Eventually we exhaust all the space available in $M$ and we are
left with $\cut_X(\partial X)$ in the form of the membrane on which the collisions have taken place.
(See Fig.~\ref{cutlocus:fig}
    \begin{figure}
    \begin{center}
    \includegraphics[scale=.45]{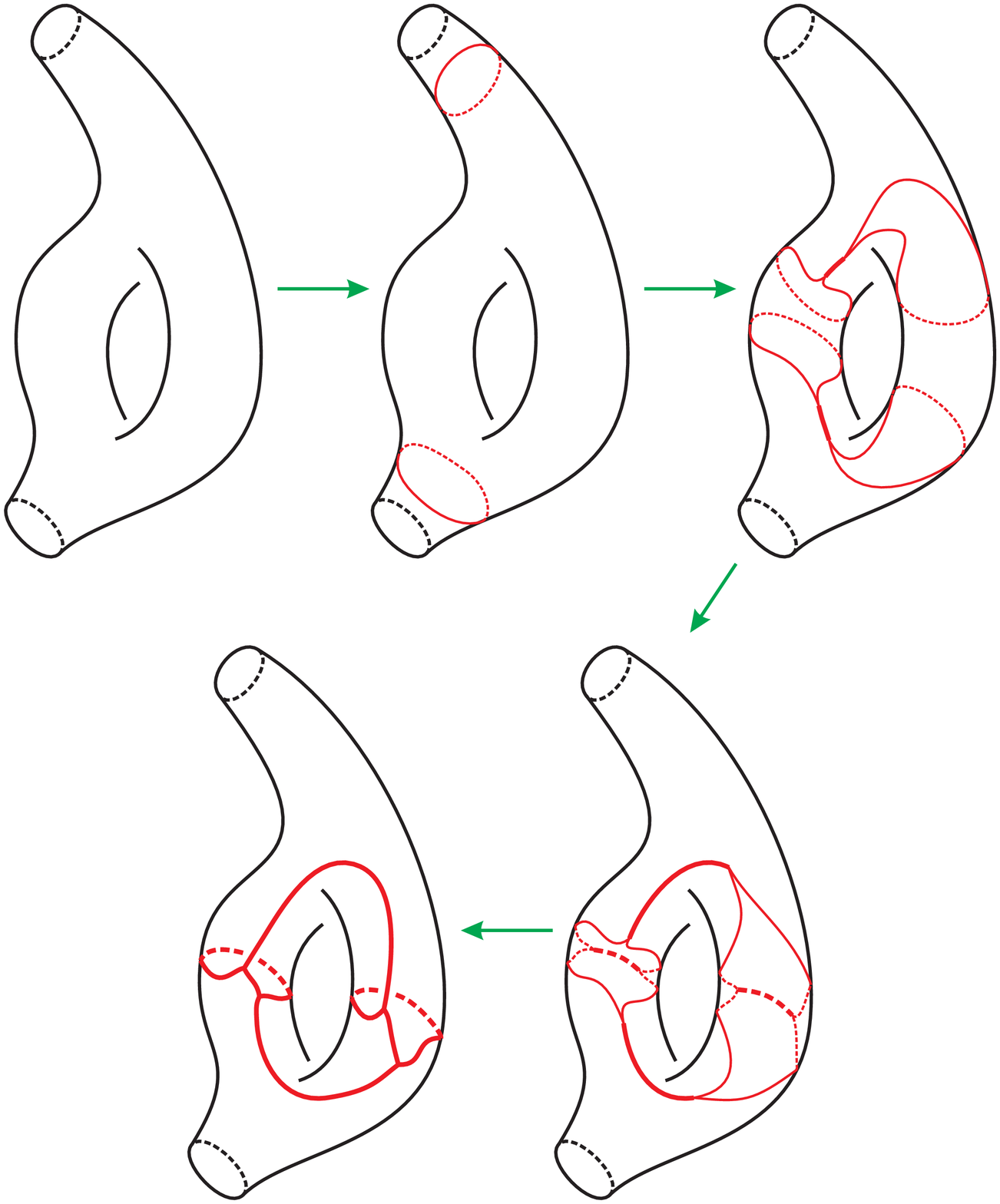}
    \mycap{The cut locus of the boundary in a Riemannian surface as the result of pushing
    the boundary towards the interior as far as possible.}
    \label{cutlocus:fig}
    \end{center}
    \end{figure}
for an allusive picture in dimension $2$.)
This description should make it obvious that $\cut_X(\partial X)$ is a compact subset of
$X$ onto which $X$ retracts, and that it has dimension at least one less than $X$.
Supposing $X$ has dimension $3$ one can in addition imagine that in a generic situation
$\cut_X(\partial X)$ will be a special spine of $X$, and therefore that dual to
it there will be a topological ideal triangulation of $X$. In more general contexts
dual to $\cut_X(\partial X)$ there will be a decomposition of $X$ into ideal polyhedra
more complicated than tetrahedra.

Turning to a cusped hyperbolic $M$, we first note that we cannot take
$\cut_M(\partial M)$, because $\partial M$ is at infinite distance from any
point in the interior of $M$, since $\partial M$ is not really part of the
hyperbolic structure, but rather of its compactification.
Recall however that each cusp of $M$ has the form $T\times[0,\infty)$,
where $T$ is a flat torus, and more precisely the image in $M$ of a horoball of $\matH^3$
acted on by the $\matZ\oplus\matZ$ lattice of the parabolic elements of $\pi_1(M)$
fixing the center of the horoball. If we replace the cusp $T\times[0,+\infty)$ with
$T\times[h,+\infty)$ for some $h>0$ we get a smaller cusp, with volume that tends to $0$
as $h\to+\infty$. Therefore for sufficiently small $v>0$ we can take disjoint
cusps at each end of $M$ all having volume $v$, and call $M^{(v)}$ the complement
in $M$ of their interior. The following fact has an intimately hyperbolic
nature, as we will explain before providing a detailed proof:

\begin{prop}\label{shrink:prop}
$\cut_{M^{(v)}}(\partial M^{(v)})$ is independent of $v$.
\end{prop}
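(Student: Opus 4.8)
The plan is to lift the whole picture to the universal cover $\matH^3$ and to exploit the fact that rescaling all cusps to a common volume amounts to pushing every horosphere inward by one and the same hyperbolic distance. First I would fix the upstairs data. The cusps of $M$ lift to a $\pi_1(M)$-invariant family $\mathcal{B}=\{B_\alpha\}$ of horoballs centred at the parabolic fixed points on $\partial\matH^3$, pairwise disjoint since $v$ is small; then $M^{(v)}$ lifts to $\matH^3\setminus\bigcup_\alpha\mathrm{int}(B_\alpha^{(v)})$ and $\partial M^{(v)}$ lifts to $\bigcup_\alpha\partial B_\alpha^{(v)}$. A distance-minimizing path in $M^{(v)}$ from an interior point $x$ to $\partial M^{(v)}$ lifts to a geodesic segment from a lift $\widetilde x$ to some horosphere $\partial B_\alpha^{(v)}$, meeting it orthogonally and having length $d(\widetilde x,\partial B_\alpha^{(v)})$. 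Consequently $x\in\cut_{M^{(v)}}(\partial M^{(v)})$ precisely when $\widetilde x$ realizes its minimal distance to the family $\{\partial B_\alpha^{(v)}\}$ along two distinct such segments, so the cut locus downstairs is the projection of this ``equidistant'' locus upstairs.

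The heart of the argument is a hyperbolic computation showing that the shrinking is uniform. Placing one horoball at $\infty$ in $\pi^3_+=(0,+\infty)\times\matC$, its boundary is the horizontal plane at some height $H$ and the cusp torus is $\matC/\Lambda$ at that height. Using the volume form $y^{-3}\diff y\,\diff A$, the volume above height $H$ equals $\mathrm{Area}(\Lambda)/(2H^2)$, while pushing the horosphere inward by hyperbolic distance $t$ replaces $H$ by $He^{t}$. Hence shrinking a cusp from volume $v$ to volume $v'$ multiplies its volume by $e^{-2t}=v'/v$, giving $t=\tfrac12\log(v/v')$. Crucially, this $t$ is the same for every cusp — and, by $\pi_1(M)$-invariance, for every horoball in $\mathcal{B}$ — precisely because all of them start from the common volume $v$.

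With this in hand the conclusion is immediate. Passing from $v$ to $v'$, every horosphere $\partial B_\alpha$ is displaced inward by the single distance $t$, so for each $\widetilde x$ lying outside all the volume-$v$ horoballs one has $d(\widetilde x,\partial B_\alpha^{(v')})=d(\widetilde x,\partial B_\alpha^{(v)})+t$, realized by the same foot-point along the same orthogonal geodesic merely lengthened by $t$. The comparisons among the horospheres, and the number of minimizing geodesics from $\widetilde x$, are therefore unchanged, so the upstairs equidistant locus is literally the same subset of $\matH^3$ and projects to the same subset of $M$. Finally, on the collar $M^{(v')}\setminus M^{(v)}$ the nearest point of $\partial M^{(v')}$ is the unique radial projection, so no point there is in the cut locus; thus $\cut_{M^{(v')}}(\partial M^{(v')})\subseteq M^{(v)}$, where it agrees with $\cut_{M^{(v)}}(\partial M^{(v)})$.

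The main obstacle I expect is the downstairs-to-upstairs correspondence in the first paragraph: one must verify carefully that distance-minimizing arcs to $\partial M^{(v)}$ are exactly the projections of orthogonal geodesic segments to the lifted horosphere family, and that disjointness of the horoballs ensures the ``collision membrane'' description of the cut locus is faithfully recorded in $\matH^3$ (in particular that no minimizing arc wraps around a cusp). Once this correspondence is secured, the uniform-displacement computation renders the independence of $v$ automatic.
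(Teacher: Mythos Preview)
Your argument is correct and rests on the same hyperbolic computation as the paper's, namely the cusp-volume formula $v=\mathrm{Area}(\Lambda)/(2H^2)$ in the half-space model. The paper organizes the endgame a bit differently: instead of your global ``uniform displacement'' observation, it places two lifted horoballs at $\infty$ and $0$ (with heights $h_1,h_2$), computes via $\int\frac{\diff x_3}{x_3}$ that their equidistant locus meets the geodesic $(0,\infty)$ at height $\sqrt{h_1h_2}$, and then checks from the two volume relations $v=a_1/(2h_1^2)$ and $v=a_2h_2^2/2$ that $h_1h_2$ depends only on the intrinsic areas $a_1,a_2$ and not on $v$. Your formulation---shrinking every cusp to volume $v'$ pushes every horosphere inward by the single distance $t=\tfrac12\log(v/v')$, so all distances to the horosphere family shift by the same additive constant---is a cleaner repackaging of the same fact and makes the invariance of the entire equidistant locus immediate, whereas the paper's pairwise computation is more hands-on but has identical content.
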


To appreciate this result, consider the case of a Riemannian manifold $X=T\times[0,1]$,
with metric $\diff s^2_{(p,t)}=f(t)\cdot\diff\sigma^2_p+\diff t^2$,
where $\diff\sigma^2$ is a flat metric on $T$ giving it area $1$, and $f$ is a smooth incresasing function
such that $f(t)=1$ for $0\leq t\leq \frac13$ and $f(t)=2$ for $\frac56\leq t\leq 1$.
Viewing $T\times\left[0,\frac13\right]$ and $T\times\left[\frac56,1\right]$ as the ends of
$X$, we see that they both have volume $\frac13$, so $X^{\left(\frac13\right)}=T\times\left[\frac13,\frac56\right]$,
and $\cut_{X^{\left(\frac13\right)}}\left(\partial X^{\left(\frac13\right)}\right)=T\times\{\frac7{12}\}$.
However $X^{\left(\frac16\right)}=T\times\left[\frac16,\frac{11}{12}\right]$ and
$\cut_{X^{\left(\frac16\right)}}\left(\partial X^{\left(\frac16\right)}\right)=T\times\{\frac{13}{24}\}$.

\bigskip

\noindent\emph{Proof of Proposition~\ref{shrink:prop}.}
Assume two cusps of volume $v$ in $M$ get lifted in $\matH^3$ to
horoballs centered at $\infty$ and at $0$ in the $\pi^3_+$ model
of $\matH^3$, namely to some half-space
$O_1=[h_1,+\infty)\times\matR^2$ and to some Euclidean ball $O_2$ of radius $\frac{h_2}2$ centered
at $(0,0,\frac{h_2}2)$, so that its top point has height $h_2$.
Since the cusps in $M$ are disjoint or coincide, one has $h_2<h_1$.
Now suppose that the action of the $\matZ\oplus\matZ$ lattice of parabolic elements
of $\pi_1(M)$ fixing $\infty$ gives as a quotient of $\{0\}\times\matR^2$ a flat torus of area $a_1$.
Note that $a_1$ is independent of $v$, namely, if we change $v$ then the height $h_1$ changes but the
area $a_1$ does not. Moreover $v$ is equal to the integral of the
volume form $\frac1{x_3^3}\diff x_1\diff x_2\diff x_3$ of $\matH^3$ over
$[h_1,+\infty)\times A_1$, where $A_1$ is a parallelogram of area $a_1$, therefore
$v=\frac{a_1}{2h_1^2}$. Applying the inversion with respect to the radius-1 sphere
centered at $0$, which is a hyperbolic isometry, $O_2$
becomes the half-plane $\left[\frac1{h_2},+\infty\right)\times\matR^2$,
and the computation already performed shows that $v=\frac{a_2h_2^2}{2}$,
for some $a_2$ again independent of $v$. The surface of the points having equal distance from $O_1$
and from $O_2$ is of course determined by the point in which it intersects the
$x_3$-axis, whose height $h$ must satisfy
$$\int_{h_2}^h\frac{\diff x_3}{x_3}=\int_{h}^{h_1}\frac{\diff x_3}{x_3}
\ \Rightarrow\ \log(h)-\log(h_2)=\log(h_1)-\log(h)$$
$$\ \Rightarrow\ 2\log(h)=\log(h_1h_2)
\ \Rightarrow\ h=\sqrt{h_1h_2}.$$
The relations $v=\frac{a_1}{2h_1^2}$ and $v=\frac{a_2h_2^2}{2}$ established above now easily imply that
$\sqrt{h_1h_2}=\frac{a_1}{a_2}$ is indeed independent of $v$, and the conclusion follows.
\finedimo

We can now state the result of~\cite{EpPe}:

\begin{thm}[Epstein-Penner canonical decomposition]\label{EpPe:thm}
If $M$ is a cusped hyperbolic $3$-manifold then dual to
$\cut_{M^{(v)}}(\partial M^{(v)})$ there is a decomposition
of $M$ into hyperbolic ideal polyhedra whose combinatorics and
hyperbolic shape of the blocks depends on $M$ only.
\end{thm}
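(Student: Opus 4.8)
The plan is to lift everything to the universal cover $\matH^3$ and use the Epstein--Penner convex hull construction in Minkowski space. Recall from the hyperboloid model that $\matH^3$ sits inside $\matR^{1,3}$ as $\calH^3_+=\{x:\scalgen xx=-1,\ x_0>0\}$, and that the points at infinity $\partial\matH^3$ correspond to the rays on the light cone $\{x:\scalgen xx=0,\ x_0>0\}$. The key idea is that a choice of cusp of volume $v$ singles out, for each parabolic fixed point $p\in\partial\matH^3$ that is the center of a lift of a cusp, a specific horoball, and each horoball centered at $p$ corresponds to a unique light-cone vector $u_p$ (its ``size'' being encoded in the scaling of $u_p$ along the ray). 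Explicitly, the horosphere at Euclidean height $h$ centered at $\infty$ in the $\pi^3_+$ model, and the image under the deck group, give a $\pi_1(M)$-invariant, discrete set of light-cone vectors $\calB\subset\partial\matH^3\subset\matR^{1,3}$.

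First I would form the closed convex hull $C$ of $\calB$ in $\matR^{1,3}$. The theorem of Epstein and Penner asserts that $C$ is a convex polyhedron whose boundary faces, projected radially back to $\matH^3$, give a $\pi_1(M)$-invariant decomposition of $\matH^3$ into ideal polyhedra with finitely many isometry types; passing to the quotient yields the canonical ideal polyhedral decomposition of $M$. The essential geometric input is that the Minkowski inner product $\scalgen{\cdot}{\cdot}$ is preserved by $\pi_1(M)\subset\mathrm{O}^+(1,3)$, so the convex hull construction is equivariant, and that the discreteness and finite covolume of the cusp group force the codimension-one faces of $\partial C$ to be genuine finite-sided Euclidean polygons (each lying in a spacelike affine hyperplane). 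I would then argue that these faces project to ideal hyperbolic polyhedra and that, since $\calB$ depends only on the choice of cusp volumes $v$, the combinatorics and the hyperbolic shapes of the blocks depend only on $(M,v)$.

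The second half is to identify this convex-hull decomposition with the dual of $\cut_{M^{(v)}}(\partial M^{(v)})$, and here Proposition~\ref{shrink:prop} does the crucial work: it tells us that the cut locus is independent of $v$, so the decomposition is canonically attached to $M$ alone. Concretely I would match the two pictures by observing that a point of $M^{(v)}$ lies on $\cut_{M^{(v)}}(\partial M^{(v)})$ exactly when its nearest-horoball distances are realized by two or more distinct horoballs in $\calB$; this is precisely the incidence condition describing the lower-dimensional faces of $\partial C$ under radial projection. In other words, the stratification of $\matH^3$ by ``which horoball in $\calB$ is closest'' is dual to the face stratification of $\partial C$, and the top-dimensional cut-locus strata (the membranes) dualize to the ideal polyhedra. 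The computation in the proof of Proposition~\ref{shrink:prop} showing $h=\sqrt{h_1h_2}$ is the local model for this equidistance condition, and it globalizes to the equidistance surfaces being precisely the radial projections of the faces of $\partial C$.

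The main obstacle is verifying that the convex hull $C$ is \emph{locally finite} and that its boundary is a genuine polyhedron rather than a degenerate object accumulating on the light cone. This is the technical heart of Epstein--Penner: one must show that $\partial C$ stays a bounded hyperbolic distance away from the light cone, so that each supporting spacelike hyperplane meets $C$ in a compact finite-sided face, and that only finitely many $\pi_1(M)$-orbits of faces occur. I would establish this using the finite covolume of $M$ together with the Margulis lemma, which bounds the ``density'' of the horoballs in $\calB$ and prevents the faces of $\partial C$ from degenerating; the triply transitive action on $\partial\matH^3$ recorded earlier lets one normalize and carry out the required estimate in a single standardized cusp. Once local finiteness is in hand, the equivariance and the duality with the cut locus are comparatively formal, and combined with Proposition~\ref{shrink:prop} they deliver the $v$-independence claimed in the statement.
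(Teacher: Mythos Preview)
Your proposal is correct and follows essentially the same route as the paper. The paper does not give a self-contained proof of Theorem~\ref{EpPe:thm} but instead cites~\cite{EpPe} and then, in the paragraph ``The light-cone and the convex hull construction,'' sketches exactly the argument you describe: one lifts the equal-volume cusps to a discrete $\pi_1(M)$-invariant set $\calP\subset\calL^3_+$, takes its convex hull $C$ in $\matR^{1,3}$, and reads off the canonical decomposition from the radial projection of $\partial C$ (this is Proposition~\ref{hull:prop}), with Proposition~\ref{shrink:prop} supplying the $v$-independence and the discreteness lemma handling the finiteness issue you flag as the main obstacle.
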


Once the Epstein-Penner canonical decompositions of two given cusped
hyperbolic manifolds have been determined, to compare the manifolds for equality
one then only needs to compare the canonical decompositions for combinatorial
equivalence. Note that one does not need to check that the hyperbolic shapes
of the polyhedra are the same, since combinatorial equivalence of the decompositions
already ensures that the manifolds are homeomorphic to each other (whence, by rigidity, isometric to each other).

\paragraph{The light-cone and the convex hull construction}
To show how one can actually construct the Epstein-Penner canonical
decomposition of a given ideally triangulated cusped manifold, we will
exploit more of the hyperboloid model $\calH^n_+$ of $\matH^n$ than we
have done so far. We first define the \emph{(future) light-cone}
in the Minkoswki space $\matR^{1,n}$ with scalar product $\scalgen{\cdot}{\cdot}$ as
$$\calL^n_+=\left\{y\in\matR^{1,n}:\ \scalgen{y}{y}=0,\ y_0>0\right\}$$
and we remark that there is a natural identification between
$\partial\matH^n$ and the projectivized light-cone
$\matP\left(\calL^n_+\right)$. Moreover for all $y\in\calL^n_+$ one can define as follows an associated horoball
$$B_y=\left\{x\in\calH^n_+:\ \scalgen xy\geq -1\right\}$$
and its boundary horosphere $H_y=\partial B_y$. It is not hard to see that $B_y$ is centered at
$[y]\in\matP\left(\calL^n_+\right)=\partial\matH^n$, and that all horoballs centered at some $p\in\matP\left(\calL^n_+\right)=\partial\matH^n$
have the form $B_y$ for some $y\in\calL^n_+$ with $[y]=p$. Note that $B_{y'}\subset B_y$ if
$y'=\lambda y$ with $\lambda>1$.

Turning to the effective construction of the Epstein-Penner decomposition,
let us fix a cusped hyperbolic $3$-manifold $M$, and a set of disjoint cusps
in $M$ all having one and the same volume $v$. These cusps lift in the universal cover
of $M$, that we identify with $\calH^3_+$, to a family of disjoint horoballs
$\{B_y:\ y\in\calP\}$ for some $\calP\subset\calL^3_+$.
Let us now establish the following crucial property of $\calP$:

\begin{lemma}
$\calP$ is discrete.
\end{lemma}

\begin{proof}
It is of course sufficient to show that for all $h>0$ the set $\{p\in\calP:\ x_0(p)\leq h\}$ is finite.
Assuming the contrary and projecting to the disc model $B^3$, we would get an infinite family
of horoballs that, as Euclidean balls, have radius bounded from below. But this is impossible since the horoballs
must be disjoint from each other.
\end{proof}

We now define $C$ as the convex hull of $\calP$ in $\matR^{1,3}$, and we note that $\calP$, and hence $C$,
are invariant under the action of $\pi_1(M)$, which extends from $\calH^3_+$ to $\matR^{1,3}$.
The following is established in~\cite{EpPe}:

\begin{prop}\label{hull:prop}
\begin{itemize}
\item $C\cap\calL^3_+=\{\lambda\cdot p:\ p\in\calP,\ \lambda\geq 1\}$;
\item For all $x\in\calH^3_+$ the half-line $\{t\cdot x:\ t\geq 0\}$ intersects
$C$ in a half-line $\{t\cdot x:\ t\geq \lambda_0(x)\}$ for a suitable
$\lambda_0(x)$, and $\lambda_0(x)\cdot x\in\partial C$:
\item $\partial C\setminus\calL^3_+$ consists precisely of the points
$\lambda_0(x)\cdot x$ for $x\in\calH^3_+$, therefore the radial projection
is a bijection between $\partial C\setminus\calL^3_+$ and $\calH^3_+$;
\item $\partial C$ consists of a $\pi_1(M)$-invariant family of finite-faced polyhedra that
intersect $\calL^3_+$ precisely at their vertices;
\item The polyhedra of which $\partial C$ consists, projected first radially to $\calH^3_+$ and then
to $M$ under the action of $\pi_1(M)$, give the ideal decomposition of $M$ dual to
$\cut_{M^{(v)}}(\partial M^{(v)})$ as in Theorem~\ref{EpPe:thm}
\end{itemize}
\end{prop}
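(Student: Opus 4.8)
The plan is to extract all five bullets from two global facts about $\calP$: its discreteness (just proved), and a recession statement to the effect that the closed future cone $K=\{\,y:\scal{y}{y}\leq 0,\ y_0\geq 0\,\}$ bounded by $\calL^3_+$ is exactly the recession cone of $C$. First I would record that $0\notin C$: by the discreteness lemma the values $x_0(p)$ for $p\in\calP$ have a positive minimum $c$, and since convex combinations and the addition of future directions both preserve the bound $x_0\geq c$, we get $C\subseteq\{x_0\geq c\}$. Next, for each $p\in\calP$ the parabolic $\matZ\oplus\matZ$ subgroup of $\pi_1(M)$ fixing the cusp point $[p]$ fixes $p$ and pushes every other $p'\in\calP$ off to infinity in the direction of the ray $\matR_{\geq 0}p$, so $p$ is a recession direction of $C$; since $M$ has finite volume its limit set is all of $\partial\matH^3$, the cusp points $\{[p]:p\in\calP\}$ are dense in $\matP(\calL^3_+)$, and therefore the closed convex recession cone of $C$ contains all of $\calL^3_+$ and hence all of $K$, while $C\subseteq K$ forces equality. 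These inputs, with soft convexity, will drive the first three bullets.

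For bullet~1 I would use that the Minkowski form is strictly convex on the light cone: distinct future null rays $y,y'$ satisfy $\scal{y}{y'}<0$, so any nontrivial convex combination of $\calP$-points lying on two distinct rays of $\calL^3_+$ is strictly timelike, hence off $\calL^3_+$; thus a point of $C\cap\calL^3_+$ must be a combination of $\calP$-points on a single ray, i.e.\ a multiple $\lambda p$, and the recession statement gives $\lambda p\in C$ for $\lambda\geq 1$ while $\lambda<1$ is excluded by $0\notin C$. For bullets~2 and~3, fix $x\in\calH^3_+$; the set $\{t\geq 0:tx\in C\}$ is closed and convex, avoids $0$, and is closed upward because $x$ lies in the interior of the recession cone $K$. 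It is nonempty because, taking any $p_0\in\calP\subseteq C$, the equality of the recession cone with $K$ gives $p_0+K\subseteq C$, and $tx-p_0$ lies in $K$ for all large $t$; hence the intersection is a half-line $[\lambda_0(x),+\infty)$ with $\lambda_0(x)x\in\partial C$, and this point lies in $\partial C\setminus\calL^3_+$ since timelike rays never meet $\calL^3_+$. Bullet~3 is then formal: every point of $C$ is timelike or null, so each boundary point off $\calL^3_+$ lies on a timelike ray and equals $\lambda_0(x)x$ for a unique $x$.

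The main obstacle is bullet~4, where the genuinely hyperbolic input enters. Writing a supporting hyperplane of a face as $\{\,y:\scal{y}{w}=-1\,\}$ with $C\subseteq\{\scal{y}{w}\geq -1\}$, the crux is that $w$ must be timelike: if $w$ were null or spacelike, the half-space $\{\scal{y}{w}\geq -1\}$ would exclude some direction of $K$ arbitrarily far out, contradicting that $K$ is the recession cone of $C$, so after scaling $w\in\calH^3_+$. For timelike $w$ the slice $\{y\in\calL^3_+:\scal{y}{w}=-1\}$ is compact, so the vertices of the face, namely the $p\in\calP$ with $\scal{p}{w}=-1$, are finite in number by discreteness, and the face meets $\calL^3_+$ exactly in this finite vertex set. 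Local finiteness of the family, and ultimately the fact that the decomposition descends to a genuinely finite one on $M$, is where I expect the real work: here one must use that the horoballs $\{B_p\}$ are pairwise disjoint, which bounds $\lvert\scal{p}{p'}\rvert$ away from $0$ and so keeps the faces from degenerating, together with $\pi_1(M)$-invariance and the bijection of bullet~3 (whence $\partial C\setminus\calL^3_+$ covers the finite-volume $\calH^3_+/\pi_1(M)=M$) to conclude there are only finitely many faces up to the group.

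Finally, bullet~5 identifies the resulting radial decomposition of $M$ with the dual of $\cut_{M^{(v)}}(\partial M^{(v)})$. The bridge is that the hyperbolic distance from $x\in\calH^3_+$ to the horoball $B_p$ is a monotone function of $\scal{x}{p}$, so the partition of $\calH^3_+$ according to which horoball of $\{B_p\}$ is nearest coincides with the radial image of the face structure of $\partial C$: the face through $\lambda_0(x)x$ records exactly the horoballs tying for nearest at $x$. Projecting to $M$, the walls of this nearest-horoball (Voronoi) partition are precisely the points equidistant from two cusp horospheres, i.e.\ the cut locus, and Proposition~\ref{shrink:prop} guarantees this coincidence is independent of $v$, in agreement with Theorem~\ref{EpPe:thm}. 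I would make the duality precise by checking that codimension-$j$ faces of $\partial C$ project onto the codimension-$j$ strata of the ideal decomposition dual to the cut locus.
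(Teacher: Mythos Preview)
The paper does not actually prove this proposition: it simply states that ``the following is established in~\cite{EpPe}'' and moves on, so there is no in-paper argument to compare your sketch against. What you have written is essentially an outline of the original Epstein--Penner proof, with the right architecture: identify the recession cone of $C$ with the closed future cone $K$, use strict convexity of the Minkowski form on null vectors for bullet~1, use the recession cone for bullets~2 and~3, use that supporting hyperplanes must have timelike normal (hence meet $\calL^3_+$ compactly) for bullet~4, and use the monotone relation between $\scal{x}{p}$ and the distance from $x$ to the horosphere $H_p$ for bullet~5.

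Two places deserve more care if you flesh this out. First, you invoke the recession cone of $C$ freely, but $C$ is defined here merely as the convex hull of the discrete set $\calP$, not its closure; you should either pass explicitly to $\overline{C}$ or argue (as Epstein and Penner do) that $C$ is already closed, since otherwise statements like ``$\lambda_0(x)\cdot x\in\partial C$'' and the recession-cone containment need justification. Second, your clause ``$\lambda<1$ is excluded by $0\notin C$'' is not the right reason: knowing $0\notin C$ does not by itself rule out $\tfrac12 p\in C$. The correct conclusion of your own null-convexity step is that any null point of $C$ arising as a genuine (finite) convex combination of $\calP$-points must already equal some $p\in\calP$, since $\calP$ meets each null ray in at most one point; the values $\lambda>1$ then come only from the recession directions. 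Finally, in bullet~4 be careful with the sign convention when you normalise the supporting functional: writing the half-space as $\{\scal{y}{w}\geq -1\}$ and requiring its recession cone to contain $K$ forces $w$ to be \emph{past} causal, so it is $-w$ (after scaling) that lands on $\calH^3_+$; this is cosmetic but worth getting straight.
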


\paragraph{The tilt formula}
Let us suppose that $M$ is a cusped hyperbolic manifold with a given
hyperbolic ideal triangulation $\calT$. We will now describe a method,
based on the results of Sakuma and Weeks~\cite{Jeff:tilt,SaWe} and exploited by
Weeks' software SnapPea~\cite{SnapPea}, to decide whether
the Epstein-Penner canonical decomposition of $M$ is actually
$\calT$ or can be obtained from $\calT$ by merging together some
of the tetrahedra into more complicated polyhedra. To this end we fix
some $v>0$ such that $M$ contains disjoint cusps of volume $v$ at all its ends
(and note that $v$ is easy to find using the combinatorics of $\calT$ and
the geometry of the hyperbolic tetrahedra that $\calT$ consists of). We then concentrate on
a $2$-face $F$ of $\calT$, to which two tetrahedra $\Delta_1$ and $\Delta_2$ will be incident.
Let us lift $F,\Delta_1,\Delta_2$ in $\calH^3_+$ to an ideal triangle $\widetilde{F}$ and
two ideal tetrahedra $\widetilde{\Delta}_1$ and $\widetilde{\Delta}_2$ such that
$\widetilde{F}=\widetilde{\Delta}_1\cap\widetilde{\Delta}_2$.
The choice of $v$ allows us to associate a point on the light-cone $\calL^3_+$ to each ideal
vertex of $\widetilde{F},\widetilde{\Delta}_1,\widetilde{\Delta}_2$, and we can consider
the straight triangle $F'$ and tetrahedra $\Delta'_1,\Delta'_2$
in $\matR^{1,3}$ having these points on $\calL^3_+$ as vertices .
Finally, we define $\vartheta(F)$ as the dihedral angle in $\matR^{1,3}$ not containing $0$
formed along the plane containing $F'$ by the half-hyperplanes containing $\Delta'_1$ and $\Delta'_2$,
and we note that $\vartheta(F)$ is independent of the particular liftings chosen.
The following is a direct consequence of Proposition~\ref{hull:prop}:

\begin{prop}\label{theta:prop}
$\calT$ is the Epstein-Penner decomposition of $M$ if and only if $\vartheta(F)<\pi$ for all
$2$-faces $F$ of $\calT$. More generally, the Epstein-Penner decomposition of $M$
is obtained from $\calT$ by merging together some of the tetrahedra of $\calT$ if and only
if $\vartheta(F)\leq \pi$ for all $2$-faces $F$ of $\calT$, and in this case the mergings
to perform are those along the $F$'s such that $\vartheta(F)=\pi$.
\end{prop}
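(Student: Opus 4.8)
The plan is to read the statement directly off the convex-hull picture of Proposition~\ref{hull:prop}. The chosen cusps of volume $v$ give the discrete set $\calP\subset\calL^3_+$, and by that proposition the facets of $\partial C$ (the codimension-one faces of the hull $C$ of $\calP$), each a Minkowski-affine polyhedron with all vertices in $\calP$, radially project onto the cells of the Epstein--Penner decomposition. The straight tetrahedra $\Delta'_1,\Delta'_2$ attached to a $2$-face $F$ are exactly the affine tetrahedra radially projecting to the ideal tetrahedra $\widetilde\Delta_1,\widetilde\Delta_2$ of $\calT$. So everything reduces to deciding, face by face, \emph{when each straight tetrahedron $\Delta'_j$ is a genuine facet of $\partial C$.} Since all points of $\calP$ have positive time-coordinate we have $0\notin C$, and the facets that radially project to $\calH^3_+$ are precisely those whose affine hull separates them from $0$; equivalently, an affine polyhedron $P$ with vertices in $\calP$ is such a facet iff $P\subset\partial C$ and $C$ lies locally on the side of $P$ \emph{away from} $0$.

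Next I would localize at $F$. Working in the $2$-plane normal to the affine span of $F'$, the half-hyperplanes carrying $\Delta'_1$ and $\Delta'_2$ appear as two rays issuing from the trace of $F'$, and by definition $\vartheta(F)$ is the angle of the wedge between them not containing $0$. Because $C$ sits on the far side from $0$, the local cross-section of the convex set $C$ at $F'$ is exactly this wedge. Hence both $\Delta'_j$ are genuine facets meeting along a true ridge iff the wedge is convex and non-degenerate, i.e. $\vartheta(F)<\pi$; the flat case $\vartheta(F)=\pi$ means $\Delta'_1,\Delta'_2$ span one hyperplane and lie in a single facet of $\partial C$, so $F$ is to be erased; and $\vartheta(F)>\pi$ would force a reflex cross-section, impossible for convex $C$, so then the straight tetrahedra protrude beyond $\partial C$ and are not facets.

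Now I would globalize, in both directions. If $\vartheta(F)\le\pi$ for every $F$, then the $\pi_1(M)$-invariant simplicial hypersurface $\widetilde\Sigma=\bigcup_j\Delta'_j$, which like $\partial C\setminus\calL^3_+$ radially covers $\calH^3_+$ bijectively, is locally convex at every ridge with $C$ on the far side from $0$; a complete locally convex polyhedral hypersurface bounding a region must equal the boundary of the hull of its vertices, so after fusing across the coplanar ($\vartheta=\pi$) faces $\widetilde\Sigma$ coincides with $\partial C\setminus\calL^3_+$. Projecting and quotienting by $\pi_1(M)$ then identifies the Epstein--Penner cells with the maximal unions of tetrahedra of $\calT$ glued across the faces with $\vartheta(F)=\pi$, which is the ``merging'' statement; and $\calT$ equals the decomposition outright precisely when all inequalities are strict. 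Conversely, if the straight tetrahedra already are (resp. unite into) the facets of the true $\partial C$, convexity of $C$ forces $\vartheta(F)<\pi$ (resp. $\vartheta(F)\le\pi$) at every face.

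The step I expect to be delicate is this last local-to-global passage: one must argue that local convexity at every ridge of the \emph{non-compact}, ideal-vertexed but $\pi_1(M)$-equivariant hypersurface $\widetilde\Sigma$ already forces it to be the hull boundary. The usual local-to-global convexity theorem is phrased for compact hypersurfaces with honest vertices, so I would invoke its equivariant form (exploiting cocompactness of the $\pi_1(M)$-action on $\calH^3_+$), or argue directly that any global failure of convexity yields a supporting-hyperplane violation at some facet, contradicting $\vartheta(F)\le\pi$. Controlling the facets near the ideal vertices on $\calL^3_+$, where they run off to the light cone, is the part needing the most care.
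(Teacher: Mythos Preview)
Your approach is correct and is exactly what the paper intends: it states the proposition merely as ``a direct consequence of Proposition~\ref{hull:prop}'' with no further argument, so you have supplied the natural elaboration rather than an alternative route. Your local analysis at a ridge (convex wedge iff $\vartheta(F)<\pi$, flat iff $\vartheta(F)=\pi$) is precisely the content of ``direct consequence.''

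The one point on which you are more scrupulous than the paper is the local-to-global step. Here is a clean way to close it that avoids invoking general theorems on non-compact hypersurfaces. Since every $\Delta'_j$ is a convex hull of points of $\calP\subset C$, one has $\widetilde\Sigma\subset C$, hence $\lambda_\Sigma(x)\geq\lambda_0(x)$ for all $x\in\calH^3_+$, where $\lambda_\Sigma(x)\cdot x$ is the intersection of the radial ray with $\widetilde\Sigma$. Now take any facet $\Delta'_j$ of $\widetilde\Sigma$ and its supporting affine hyperplane $H_j$. Local convexity at every ridge of $\Delta'_j$ says that each neighbouring $\Delta'_k$ lies on the far side of $H_j$ from $0$ (or in $H_j$). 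Propagating across ridges, \emph{all} of $\widetilde\Sigma$, and in particular every point of $\calP$, lies on that side of $H_j$; hence $H_j$ is a supporting hyperplane of $C$ and $\Delta'_j\subset\partial C$. This gives $\widetilde\Sigma\subset\partial C\setminus\calL^3_+$, and since both are sections of the radial projection onto $\calH^3_+$ they coincide. The propagation step is where $\pi_1(M)$-cocompactness is used: there are only finitely many $\pi_1(M)$-orbits of facets, so one only needs to check finitely many signed-distance inequalities, and the ideal vertices cause no trouble because the inequality is affine.
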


When $\calT$ does not meet the conditions of this proposition, namely when it contains some offending
$2$-face $F$ with $\vartheta(F)>\pi$, a new triangulation $\calT'$ with better chances
of being a subdivision of the Epstein-Penner decomposition is obtained by performing
the 2-to-3 move along $F$. Note however that the move can be applied only if the two tetrahedra of $\calT$ incident
to $F$ are distinct. One can then start a process that searches for faces $F$ with
$\vartheta(F)>\pi$ to which the $2$-to-$3$ move can be applied, applies the move and starts over again.
The process can get stuck if all $F$'s with $\vartheta(F)>\pi$ are incident to the
same tetrahedron on both sides, but it is shown in~\cite{SaWe} that if the process does
not get stuck then it converges in finite time to a subdivision of the Epstein-Penner decomposition.
As a matter of fact, experimentally the process always converges, and it does so very quickly.

There is however one aspect of the process just described that we have not yet described
how to perform algorithmically, namely the computation of the angle $\vartheta(F)$. This is
done using the Sakuma-Weeks \emph{tilt formula}, that constitutes the core of~\cite{SaWe}.
This formula associates a real number $t(\Delta,E)$, called the \emph{tilt},
to each triangle $E$ in $\matR^{1,3}$ with vertices on $\calL^3_+$
and to each tetrahedron $\Delta$ with vertices on $\calL^3_+$ having the triangle as a face.
To do so, the following vectors are needed:
\begin{itemize}
\item The unit normal $p$ to $\Delta$, satisfying $\scal px=-1$ for all $x\in\Delta$;
\item The outer unit normal $m$ to $E$, satisfying $\scal mm=1$, $\scal mx=0$ for all $x\in E$, and $\scal mv<0$, where
$v$ is the vertex of $\Delta$ opposite to $E$.
\end{itemize}
Then $t(\Delta,E)=\scal pm$, and the knowledge of the tilts allows to
apply Proposition~\ref{theta:prop} by means of the following:

\begin{prop}
With the notation introduced before Proposition~\ref{theta:prop},
$\vartheta(F)<\pi$ if and only if
$t(\Delta_1',F')+t(\Delta_2',F')<0$, and
$\vartheta(F)=\pi$ if and only if
$t(\Delta_1',F')+t(\Delta_2',F')=0$.
\end{prop}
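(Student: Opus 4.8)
The plan is to reduce everything to two linear-algebra observations in $\matR^{1,3}$ and then read off the sign of a single scalar. Write $w_1,w_2,w_3$ for the vertices of $F'$, set $W=\mathrm{span}(w_1,w_2,w_3)$, and let $v_1,v_2$ be the vertices of $\Delta_1',\Delta_2'$ opposite to $F'$. The three null vectors $w_i$ are linearly independent (their Gram matrix has zero diagonal and strictly negative off-diagonal entries, hence nonzero determinant), so $W$ is a nondegenerate $3$-plane of signature $(1,2)$ and $W^\perp$ is a $1$-dimensional spacelike line. First I would observe that both $m_1$ and $m_2$, being Minkowski-orthogonal to every vertex of $F'$, lie in $W^\perp$, so each is $\pm$ the unit generator; and that $p_1-p_2$ also lies in $W^\perp$, because $\scal{p_1}{w_i}=\scal{p_2}{w_i}=-1$ forces $\scal{p_1-p_2}{w_i}=0$. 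Hence $p_1-p_2=c\,m_1$ for a scalar $c$, and the whole statement will follow once I show that $t(\Delta_1',F')+t(\Delta_2',F')=c$ and that the sign of $c$ governs $\vartheta(F)$.

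Next I would pin down the signs. The totally geodesic plane of $\matH^3$ spanned by the ideal points of $F'$ is exactly $\{x\in\calH^3_+:\scal{m_1}{x}=0\}=W\cap\calH^3_+$, and, $\Delta_1'$ and $\Delta_2'$ being glued along the face $F'$, the two ideal tetrahedra sit on opposite sides of this plane; thus $\scal{m_1}{v_1}$ and $\scal{m_1}{v_2}$ have opposite signs. Since the conventions defining $m_1,m_2$ require $\scal{m_1}{v_1}<0$ and $\scal{m_2}{v_2}<0$, this forces $\scal{m_1}{v_2}>0$ and $m_2=-m_1$. Consequently
$$t(\Delta_1',F')+t(\Delta_2',F')=\scal{p_1}{m_1}+\scal{p_2}{m_2}=\scal{p_1-p_2}{m_1}=c\,\scal{m_1}{m_1}=c,$$
using $\scal{m_1}{m_1}=1$.

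It then remains to match the sign of $c$ with the dihedral angle. Using Proposition~\ref{hull:prop}, for $x\in\calH^3_+$ the point of $\partial C$ on the ray through $x$ sits at parameter $\lambda_0(x)$ and the interior of $C$ corresponds to larger parameters; evaluating $\scal{p_1}{\cdot}$ along such a ray shows that the side of the face-hyperplane $\{\scal{p_1}{\cdot}=-1\}$ meeting the interior of $C$ is $\{\scal{p_1}{\cdot}<-1\}$. Local convexity of $\partial C$ along the ridge $F'$ — which is precisely the condition that the dihedral angle not containing $0$ be $\le\pi$ — is equivalent to $v_2$ lying on this side, i.e.\ to $\scal{p_1}{v_2}\le-1$, with equality exactly when $\Delta_1'$ and $\Delta_2'$ are coplanar (the flat case $\vartheta(F)=\pi$). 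Writing $p_1=p_2+c\,m_1$ and using $\scal{p_2}{v_2}=-1$ together with $\scal{m_1}{v_2}>0$ gives $\scal{p_1}{v_2}=-1+c\,\scal{m_1}{v_2}$, so $\scal{p_1}{v_2}<-1\iff c<0$ and $\scal{p_1}{v_2}=-1\iff c=0$. Combining this with $t(\Delta_1',F')+t(\Delta_2',F')=c$ yields the proposition.

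The step I expect to be most delicate is the bookkeeping of signs and of Euclidean-versus-Minkowski geometry: the angle $\vartheta(F)$ is the ordinary (affine) dihedral angle of the convex hull $C$, whereas $m_i,p_i$ and the tilts are defined through the Minkowski form, so the argument must keep these two structures separate and verify, in particular, that ``$\vartheta(F)<\pi$'' translates into ``$v_2$ on the $\scal{p_1}{\cdot}<-1$ side'' with the correct orientation (the side away from $0$). Checking $m_2=-m_1$ and identifying the interior side of the face-hyperplane are the two places where an orientation slip would flip the inequality, so I would test both against a concrete nondegenerate example before trusting the general argument.
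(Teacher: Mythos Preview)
Your argument is essentially correct and in fact goes well beyond what the paper does: the paper explicitly declines to give a proof and only works out a motivating example in $\matR^{1,2}$ with a specific symmetric configuration, checking by direct computation that $t(\Delta_1',F')+t(\Delta_2',F')=\frac1{s_1}+\frac1{s_2}-2$ and that this quantity controls whether the three relevant points are aligned. Your approach---showing $m_2=-m_1$, writing $p_1-p_2=c\,m_1$ so that the sum of tilts equals $c$, and then reading off the sign of $c$ from $\scal{p_1}{v_2}+1$---is the clean general argument and is what the paper's example is meant to suggest.

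One point deserves tightening. When you pass from ``$\vartheta(F)\le\pi$'' to ``$\scal{p_1}{v_2}\le -1$'' you phrase it as ``local convexity of $\partial C$ along the ridge $F'$'' and invoke Proposition~\ref{hull:prop}. But the statement is about an arbitrary hyperbolic ideal triangulation $\calT$, which need not be the Epstein--Penner one, so $\Delta_1',\Delta_2'$ need not be faces of $\partial C$ and $F'$ need not be a ridge. Fortunately the equivalence you want does not require $C$ at all: you have already observed that $v_1$ and $v_2$ lie on opposite sides of the linear $3$-plane $W$, and since $0\in W$ this forces, in the $2$-dimensional quotient by the affine span of $F'$, the image of $0$ to lie on the line that separates the images of $v_1$ and $v_2$. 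An elementary angle count in that quotient then gives exactly $\vartheta(F)<\pi\iff v_2$ lies on the side $\scal{p_1}{\cdot}<-1$ of $H_1$ (the side not containing $0$, as you correctly identify), with equality for $\vartheta(F)=\pi$. So the step you yourself flagged as delicate is right, but its justification should rest on the ``opposite sides of $W$'' fact rather than on $\partial C$.
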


Rather than provinding a complete proof of this result, we motivate it using an example
in one dimension less. We suppose in $\matR^{1,2}$ that $F'$ has vertices
$\vettt110$ and $\vettt1{-1}0$, while $\Delta'_1$ has the further
vertex $\vettt{s_1}0{s_1}$ and $\Delta'_2$ has the further vertex $\vettt{s_2}0{-s_2}$, for
some $s_1,s_2>0$. Now one easily sees that $\vartheta(F)=\pi$ if and only if the vectors
$\vettt{s_1}0{s_1},\vettt100,\vettt{s_2}0{-s_2}$ are aligned,
which happens if $\frac1{s_1}+\frac1{s_2}=2$. And more generally that $\vartheta(F)<\pi$ if and only
if $\frac1{s_1}+\frac1{s_2}<2$. A direct computation now shows that
$$p_1=\vettt10{1-\frac1{s_1}},\quad p_2=\vettt10{\frac1{s_2}-1},\quad
m_1=\vettt00{-1},\quad m_2=\vettt001$$
\begin{eqnarray*}
\Rightarrow t(\Delta_1',F')+t(\Delta_2',F') & = & \scal{p_1}{m_1}+\scal{p_2}{m_2}\\
& = & \frac1{s_1}-1+\frac1{s_2}-1\\
& = & \frac1{s_1}+\frac1{s_2}-2
\end{eqnarray*}
and the conclusion easily follows.

\bigskip

The next result, which represents the main achievement of~\cite{SaWe,Jeff:tilt}, shows
how to effectively compute the tilts of a given hyperbolic ideal triangulation of $M$,
knowing only the geometry of the hyperbolic tetrahedra
of $\calT$ and the ``height'' in each of them of the horospheres giving equal-volume
cusps in $M$.

\begin{prop}
Let $\Delta\in\calT$ have vertices $w_0,w_1,w_2,w_3$ and define
$E_i$ as the face opposite to $w_i$. Let $\Delta'\subset\matR^{1,3}$
be the straight lifting of $\Delta$ with vertices on $\calL^3_+$
determined by the choice of equal-volume cusps of $M$, let $E'_i$ be the face of
$\Delta'$ projecting to $E_i$, and set
$t_i=t(\Delta',E_i')$.
Denote by $\theta_{ij}$ the dihedral angle of $\Delta$ along the edge joining
$w_i$ and $w_j$, and by $r_i$ the radius of the circle circumscribed to the
Euclidean triangle obtained by intersecting with $\Delta$ the horosphere
centered at $w_i$ that belongs to the system yielding in $M$ the equal-volume cusps.
Then:
$$\left(\begin{array}{c}t_0\\ t_1\\ t_2\\ t_3
\end{array}\right)=
\left(\begin{array}{cccc}1 & -\cos\theta_{01} & -\cos\theta_{02} & -\cos\theta_{03}\\
-\cos\theta_{01}  & 1 & -\cos\theta_{12} & -\cos\theta_{13}\\
-\cos\theta_{02} & -\cos\theta_{12} & 1 & -\cos\theta_{23}\\
-\cos\theta_{03}  & -\cos\theta_{13} & -\cos\theta_{23} & 1
\end{array}\right)\cdot
\left(\begin{array}{c}r_0\\ r_1\\ r_2\\ r_3
\end{array}\right).$$
Moreover $r_i=e^{-d_i}$ where $d_i$ is the (signed) distance between $E_i$ and the horosphere at $w_i$
already described.
\end{prop}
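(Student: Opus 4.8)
The plan is to push everything onto the four light-cone vectors $v_0,\dots,v_3\in\calL^3_+$ that represent the chosen horoballs at $w_0,\dots,w_3$ and that are the vertices of $\Delta'$, and the decisive first step is pure linear algebra. Since $\Delta$ is non-degenerate its four ideal vertices are not coplanar, equivalently $v_0,\dots,v_3$ are linearly independent; hence they form a basis of $\matR^{1,3}$ and the Gram matrix $g_{ij}=\scal{v_i}{v_j}$ (which has zero diagonal, the $v_i$ being on $\calL^3_+$) is invertible. Write $h=g^{-1}$ and let $\{v^0,\dots,v^3\}$ be the dual basis, $\scal{v^i}{v_j}=\delta^i_j$, so that $\scal{v^i}{v^j}=h_{ij}$. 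The defining property $\scal{p}{v_k}=-1$ for all $k$ gives immediately $p=-\sum_k v^k$, while $\scal{m_i}{v_k}=0$ for $k\neq i$ together with $\scal{m_i}{m_i}=1$ forces $m_i=c_i v^i$ with $c_i=\scal{m_i}{v_i}<0$ and $c_i^2 h_{ii}=1$. Substituting into $t_i=\scal{p}{m_i}$ gives
$$t_i=\frac{1}{\sqrt{h_{ii}}}\sum_k h_{ki}=\sqrt{h_{ii}}+\frac{1}{\sqrt{h_{ii}}}\sum_{k\neq i}h_{ki},$$
so the matrix identity will follow once $h_{ij}$ is read off geometrically.

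The second step interprets $m_i$ and the scalars $c_i, h_{ij}$. Because $\scal{m_i}{v_k}=0$ for the three $k\neq i$, the vector $m_i$ is the spacelike unit normal to the geodesic plane carrying the face $E_i$, oriented outward by the sign condition $\scal{m_i}{v_i}<0$. Two standard computations in the upper half-space model (placing the relevant vertex at $\infty$) supply the dictionary. First, for a horoball $B_y$ and a geodesic plane whose outward unit normal $n$ has the center of $B_y$ on its inner side, one has $\scal{y}{n}=-e^{d}$ with $d$ the signed distance from $H_y$ to the plane; applied to $m_i$ and the horoball at $v_i$ this reads $c_i=-e^{d_i}$, whence $\sqrt{h_{ii}}=1/|c_i|=e^{-d_i}$. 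Second, two geodesic planes meeting along an edge at interior dihedral angle $\varphi$, with outward unit normals $n_1,n_2$, satisfy $\scal{n_1}{n_2}=-\cos\varphi$. Now $E_i$ and $E_j$ meet along the edge $w_kw_l$ with $\{k,l\}=\{0,1,2,3\}\setminus\{i,j\}$, which is the edge \emph{opposite} to $w_iw_j$; since an ideal tetrahedron has equal dihedral angles along opposite edges, this angle is $\theta_{ij}$, so $\scal{m_i}{m_j}=-\cos\theta_{ij}$. Combining, for $i\neq j$ we get $h_{ij}=\scal{m_i}{m_j}/(c_ic_j)=-\cos\theta_{ij}\,e^{-d_i}e^{-d_j}$, while $h_{ii}=e^{-2d_i}$.

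The third step is the genuinely geometric identity $r_i=e^{-d_i}$, which is both the ``Moreover'' clause and the last ingredient above. With $w_i$ at $\infty$ the horosphere at $w_i$ is a horizontal plane at some height $H$, its intrinsic metric is the flat metric of $\matC$ rescaled by $1/H$, and the cross-section triangle is the Euclidean triangle on the other three vertices; the opposite face $E_i$ is the hemisphere whose equator is the circumscribed circle of that triangle, of Euclidean radius $\rho$ equal to its circumradius. The common perpendicular from $\infty$ is the vertical line through the circumcenter, along which the horosphere sits at height $H$ and the top of the hemisphere at height $\rho$, so $d_i=\log(H/\rho)$ while the intrinsic circumradius is $r_i=\rho/H=e^{-d_i}$.

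Finally I substitute $\sqrt{h_{ii}}=r_i$ and, for $k\neq i$, $h_{ki}/\sqrt{h_{ii}}=-\cos\theta_{ik}\,e^{-d_k}=-\cos\theta_{ik}r_k$ into the identity displayed in the first step, obtaining $t_i=r_i-\sum_{k\neq i}\cos\theta_{ik}r_k$, which is exactly row $i$ of the asserted matrix product. The main obstacle is not any one computation but the orientation-and-sign bookkeeping that makes the three dictionaries consistent: representing $p$ and $m_i$ correctly through the dual basis, fixing the outward-normal convention so that $c_i<0$ and $\scal{m_i}{m_j}=-\cos(\cdot)$ rather than $+\cos$, and — the one substantive geometric subtlety — recognizing that the dihedral angle controlling $\scal{m_i}{m_j}$ lives on the edge \emph{opposite} to $w_iw_j$, so that the opposite-edge equality of dihedral angles is precisely what yields the symmetric matrix with entries $-\cos\theta_{ij}$.
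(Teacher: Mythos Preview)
Your argument is correct and, in fact, considerably more complete than what the paper offers. The paper does not prove the matrix identity at all: after stating the proposition it explicitly says that a full proof is beyond its scope, cites Weeks and Sakuma--Weeks for it, and only verifies the ``Moreover'' clause $r_i=e^{-d_i}$ in one lower dimension (a triangle with vertices $0,2,\infty$ in $\pi^2_+$). Your third step reproduces exactly that computation in the genuine $3$-dimensional setting, and your first two steps supply what the paper omits.

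The route you take is essentially the one behind the cited references: pass to the dual basis of the light-cone vertices, express $p$ and the $m_i$ in that basis, and read off $t_i$ from the inverse Gram matrix $h=g^{-1}$; then translate the entries of $h$ via the standard Minkowski dictionaries $\scal{v_i}{m_i}=-e^{d_i}$ and $\scal{m_i}{m_j}=-\cos(\text{interior dihedral angle})$. Your bookkeeping is right, including the point you flag as the one substantive subtlety: $E_i\cap E_j$ is the edge $w_kw_l$ opposite to $w_iw_j$, and it is the equality of dihedral angles along opposite edges of an ideal tetrahedron (stated earlier in the paper when the modules $z,z',z''$ are introduced) that makes the entry come out as $-\cos\theta_{ij}$ and the matrix symmetric. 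So there is no gap; you have simply given a full proof where the paper gives only an illustrative fragment.
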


We note that this result was first established in~\cite{Jeff:tilt} in dimension $3$ and then
generalized in~\cite{SaWe} for all dimensions. Giving a complete proof is beyond our scopes, but
we can at least prove the last assertion in dimension $2$. If in $\pi^2_+\subset\matC$ the triangle
$\Delta$ has vertices $0,2,\infty$ and the horosphere centered at $\infty$ cuts it at height
$\Im(z)=h$, therefore in a segment of Euclidean length $\frac2h$, then of course $r_\infty=\frac1h$.
The point closest to $\infty$ of the edge opposite to $\infty$ is $1+i$, and its distance from
the horosphere is $d_\infty=\int\limits_1^h\frac{\diff s}{s}=\log(h)$, therefore
one indeed has $r_\infty=e^{-d_\infty}$. This concludes our discussion
of the algorithmic recognition of cusped manifolds.

\paragraph{Existence of hyperbolic ideal triangulations}
Taking a subdivision of the Epstein-Penner decomposition into
hyperbolic ideal polyhedra of a cusped hyperbolic manifold $M$, one
can get a \emph{topological} ideal triangulation,
which in $M$ gives a hyperbolic triangulation with some genuine
and some \emph{flat} tetrahedra (the four vertices are distinct but aligned).
The reason is that it may not be possible to subdivide the polyhedra separately
in such a way that the subdivision of the faces is matched by the gluings,
therefore some flat tetrahedra may need to be inserted
(see \cite{PePo,PeWe} for a detailed discussion of this process).
The paper~\cite{WYY} describes a sufficient condition for the existence
of a subdivision of
the Epstein-Penner decomposition into genuine hyperbolic ideal polyhedra,
and~\cite{LST} shows that up to passing to a finite cover of $M$
one can always find a genuine hyperbolic ideal triangulation.
The experimental findings of~\cite{CaHiWe} strongly suggest
that every cusped hyperbolic 3-manifold does possess such a triangulation,
but the question is apparently open for the time being.

One further aspect is worth mentioning. SnapPea solves the hyperbolicity equations
using Newton's method and numerical approximation, so one could view
SnapPea's finding that a certain manifold is hyperbolic merely as an informal indication.
Recall however that the hyperbolicity equations are algebraic ones, and that they have at most
one solution. This implies that the solution, if any, belongs to some finite extension of
$\matQ$. Goodman's software Snap~\cite{Snap}, starting from a high-precision numerical
solution, is capable of guessing what the right extension of $\matQ$ is and then
to check that the solution is an exact one using arithmetic, without approximation.
This process has been successfully applied to the manifolds found in~\cite{CaHiWe}, which means
that this census is immune to numerical flaws.

\section{Complexity and closed manifolds}
This section represents a singularity in the present survey, since hyperbolic
geometry only plays in it a comparatively marginal r\^ole. We will briefly discuss Matveev's~\cite{mat:compl}
\emph{complexity theory} for closed manifolds, and the experimental results obtained exploiting it.
To begin, we extend the notion of special spine of a closed (orientable) $3$-manifold $M$,
already discussed in Section~\ref{objects:sec}, by defining a \emph{simple spine} of $M$
as a compact polyhedron $P\subset M$ onto which $M$ minus some number of points collapses,
and such that the link~\cite{RS} of every point of $P$ is contained in the complete
graph with $4$ vertices. Note that special polyhedra, surfaces and graphs are simple polyhedra.
For a simple polyhedron we can still define a \emph{vertex} as a point appearing as in
Fig.~\ref{special:fig}-right. Following Matveev we then define the \emph{complexity} $c(M)$ of
a manifold $M$ as the minimal number of vertices in a simple spine of $M$.

To explain the reason why the definition of complexity is based on the
more flexible notion of simple (rather than special) spine
we need to recall that a \emph{connected sum}
of two manifolds $M_1$ and $M_2$ is a manifold $M_1\#M_2$ obtained by removing
open balls from $M_1$ and $M_2$, and by gluing the resulting boundary spheres.
Since $S^2$ has only two isotopy classes of self-homeomorphisms, if $M_1$ and $M_2$ are connected
then almost two different $M_1\#M_2$'s exist. Moreover if $M_1$ and $M_2$ are \emph{oriented}
(as opposed to orientable only) and one insists that the gluing homeomorphism  should
reverse the induced orientations, one gets a uniquely defined $M_1\#M_2$.

The $3$-sphere $S^3$ is the identity element for the operation $\#$ of connected sum, and
a manifold $M$ is called \emph{prime} if it cannot be expressed as a connected sum
with both summands different from $S^3$; in addition, $M$ is called \emph{irreducible}
if every $2$-sphere in $M$ bounds a $3$-ball in $M$. Of course every
irreducible manifold is prime. The following Haken-Kneser-Milnor decomposition
theorem has been known for a long time~\cite{Hempel,Jaco}:

\begin{thm}
The only prime non-irreducible (closed orientable) $3$-manifold is $S^2\times S^1$.
Every $3$-manifold can be expressed in a unique way as a connected sum of prime ones.
\end{thm}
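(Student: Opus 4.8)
The plan is to treat the two assertions separately, beginning with the characterization of the prime but non-irreducible manifolds. So suppose $M$ is prime and not irreducible, meaning it contains an embedded $2$-sphere $S$ that does not bound a ball. First I would observe that $S$ cannot be separating: if it were, cutting $M$ along $S$ and capping the two resulting boundary spheres with balls would express $M$ as a connected sum $M_1\#M_2$, and the hypothesis that $S$ bounds no ball forces neither summand to be $S^3$, contradicting primeness. Hence $S$ is non-separating, so there is a loop $\gamma$ meeting $S$ transversally in a single point; since $\pi_1(S^2)$ is trivial the normal bundle of $S$ is a product, and orientability of $M$ rules out the twisted gluing, so a regular neighborhood of $S\cup\gamma$ is a copy of $S^2\times S^1$ with an open ball removed. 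This exhibits $M$ as $(S^2\times S^1)\#M'$, and primeness together with $S^2\times S^1\neq S^3$ forces $M'=S^3$, whence $M\cong S^2\times S^1$. The converse direction, that $S^2\times S^1$ is itself prime and non-irreducible, is immediate from the non-separating sphere $S^2\times\{*\}$.

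For the decomposition statement I would establish existence and uniqueness in turn. \emph{Existence} amounts to showing that the procedure of repeatedly splitting $M$ along spheres that do not bound balls terminates. The key input is Kneser's finiteness lemma. Fixing a triangulation of $M$ with $t$ tetrahedra, I would put any system of disjoint $2$-spheres, none bounding a ball and no two cobounding a product $S^2\times[0,1]$, into normal form with respect to the triangulation, so that each sphere meets every tetrahedron in a disjoint union of triangles and quadrilaterals. A counting argument then bounds the number of such spheres linearly in $t$: once there are too many, the combinatorics of the complementary regions forces two of the spheres to be parallel. This bound guarantees that a maximal system of disjoint, pairwise non-parallel, ball-avoiding spheres is finite; cutting $M$ along such a system and capping all the new boundary spheres with balls yields finitely many pieces, each prime by maximality, and reassembling recovers $M$ as their connected sum.

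For \emph{uniqueness}, suppose $M=P_1\#\cdots\#P_m=Q_1\#\cdots\#Q_n$ are two prime decompositions, realized by two systems $\mathcal{S}$ and $\mathcal{T}$ of pairwise disjoint splitting spheres. I would first isotope $\mathcal{S}$ and $\mathcal{T}$ into general position and then eliminate their circles of intersection by an innermost-disc argument: a circle innermost on some sphere of $\mathcal{T}$ bounds a disc there, and the same circle bounds a disc on a sphere of $\mathcal{S}$, so within the irreducible pieces one can surger or isotope across these discs to remove the circle without creating new ones. Iterating makes the two systems disjoint, after which matching up the complementary pieces identifies the unordered lists of prime summands up to homeomorphism. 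Throughout I would treat the $S^2\times S^1$ factors—the only non-irreducible primes by the first part—as a special case, since their defining spheres need not bound balls and so must be tracked separately in the swapping argument.

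The main obstacle is the existence half: the normal-surface counting underlying Kneser's lemma is the genuinely technical step, and it is precisely what forces the splitting process to terminate. By contrast, the characterization in Part~1 and the innermost-disc comparison used for uniqueness are comparatively formal once this finiteness, together with the fact that spheres in irreducible pieces bound balls, is in hand.
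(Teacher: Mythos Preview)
The paper does not actually prove this theorem: it is a survey, and the result is stated as the classical Haken--Kneser--Milnor theorem with a reference to the textbooks of Hempel and Jaco, without any argument given. So there is no ``paper's own proof'' to compare your proposal against.

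That said, your outline follows the standard route one finds in those references: the characterization of the prime non-irreducible case via a non-separating sphere, Kneser's normal-surface finiteness for existence, and an innermost-disc argument \`a la Milnor for uniqueness. Two small points are worth flagging. First, in Part~1 the phrase ``immediate from the non-separating sphere $S^2\times\{*\}$'' establishes non-irreducibility of $S^2\times S^1$ but not its primeness; you still need to argue that every \emph{separating} sphere in $S^2\times S^1$ bounds a ball (for instance via the universal cover, or via $\pi_1$ together with the Poincar\'e conjecture if you are willing to invoke it). Second, your uniqueness sketch is a bit loose: once the two sphere systems are disjoint, simply ``matching up the complementary pieces'' does not immediately give a bijection of prime factors, since the pieces of the refined decomposition need not themselves be prime. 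Milnor's actual argument proceeds by induction, removing one prime summand at a time from both sides; your remark that the $S^2\times S^1$ factors must be tracked separately is correct and is exactly where the care is needed.
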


Turning back to complexity, let us define a simple spine $P$ of $M$ to be
\emph{minimal} if it has $c(M)$ vertices and no proper subset of $P$ is still a spine
of $M$. We now have the following fundamental result of Matveev~\cite{mat:compl}:

\begin{thm}\label{add:special:thm}
\begin{itemize}
\item[(1)] $c(M_1\#M_2)=c(M_1)+c(M_2)$;
\item[(2)] If $M$ is prime then either $c(M)=0$ and
$$M\in\{S^2\times S^1,\ S^3,\ \matP^3(\matR),\ L(3;1)\},$$
or $c(M)>0$ and every minimal simple spine of $M$ is special.
\end{itemize}
\end{thm}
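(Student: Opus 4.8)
The plan is to build both parts on a single structural mechanism: whenever a simple spine fails to be special, one can surger or simplify it without increasing the number of vertices. First I would clarify what can go wrong with a simple spine $P$ that is not special. By the definition of ``simple,'' every point of $P$ has a link embedded in the complete graph $K_4$, so the local models are exactly those of a special polyhedron (non-singular disc points, triple lines, true vertices) \emph{plus} the degenerate models: isolated points, points on a free edge, boundary points of $2$-components, and $1$-dimensional (graph) pieces or closed-surface pieces carrying no vertices. The key observation is that any such ``non-special'' feature is a local defect that can be removed by a collapse or by cutting along a surface, and these operations never create new vertices. So the skeleton of the argument is: a minimal simple spine has no removable features, and the list of surviving features forces either specialness or one of the exceptional small cases.

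For part~(2), I would argue as follows. Let $P$ be a minimal simple spine of a prime $M$, so $P$ has $c(M)$ vertices and no proper subpolyhedron is still a spine. If $c(M)>0$ the aim is to show $P$ is special. First, minimality (no proper subspine) kills all free edges and free faces: wherever $P$ has a boundary $1$-stratum or a $2$-component with free boundary one can collapse it away, contradicting minimality, so the remaining $2$-strata have their whole frontier along triple lines and vertices. Next I would rule out components of $P$ that are closed surfaces or graphs disjoint from the vertex set; here is where primeness enters, since a closed surface component $\Sigma\subset M$ either bounds (and can be pushed off, contradicting that $P$ is a spine) or else is essential, and an essential sphere contradicts primeness while an essential higher-genus surface lets one simplify $M$ along it. The surviving configuration — every $2$-stratum an open disc with frontier on triple lines, at least one genuine vertex present — is exactly the definition of a special polyhedron. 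The complementary branch is $c(M)=0$: then $M$ has a simple spine with no vertices, so the spine is built only from surfaces and triple-line graphs, and a direct enumeration of the vertex-free simple spines of a prime manifold (using that the dual object is essentially combinatorially trivial) produces precisely $S^2\times S^1$, $S^3$, $\matP^3(\matR)$, and $L(3;1)$. I expect this enumeration of the $c=0$ case to be the most delicate step, since one must verify both that each listed manifold really has a vertex-free spine and that no others do.

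For part~(1), the inequality $c(M_1\#M_2)\le c(M_1)+c(M_2)$ is the easy direction: given minimal spines $P_1,P_2$ of $M_1,M_2$, I would glue them along a small disc (or a wedge at a single point dual to the connecting sphere) to produce a simple spine of $M_1\#M_2$ whose vertex count is additive, using Theorem~\ref{closed:calculus} / the duality with triangulations to see the connect-sum locally. The reverse inequality $c(M_1\#M_2)\ge c(M_1)+c(M_2)$ is the substantive half. Here I would take a minimal (hence, by part~(2), special when the complexity is positive) spine $P$ of $M_1\#M_2$, locate the connecting $2$-sphere $S$ as an essential sphere, and put $S$ in general position against $P$ so that $P\cap S$ is a trivalent graph on $S$. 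The plan is to cut $P$ along $S$ and cap off, producing spines of the summands $M_1$ and $M_2$, and then to check that this surgery does not increase the total number of vertices — each vertex of $P$ lands in exactly one summand, and the new strata created along $S$ carry no vertices because $S\cap P$ is a graph. Summing the vertex counts of the two resulting spines then yields the desired lower bound.

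The hard part will be the reverse inequality in part~(1): one must control how an essential sphere can meet a \emph{minimal} special spine and ensure the cutting-and-capping operation genuinely yields \emph{spines} of the summands (not merely subpolyhedra of the right manifolds) without spawning extra vertices. This requires an incompressibility/normalization argument to isotope $S$ so that $S\cap P$ is as simple as possible — ideally a single circle or a controlled trivalent graph — and then a careful accounting that the caps contribute only vertex-free material. Everything else reduces to the local collapse and general-position lemmas, which are routine once the feature-removal dictionary above is set up.
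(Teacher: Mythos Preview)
The paper does not contain a proof of this theorem: it is a survey, and Theorem~\ref{add:special:thm} is quoted as ``the following fundamental result of Matveev~\cite{mat:compl}'' with no argument given beyond a reference. So there is no in-paper proof to compare your proposal against; your sketch is essentially an outline of Matveev's original argument, and the right comparison is with~\cite{mat:compl,matbook}.

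That said, your outline for part~(2) follows the standard route (collapse free faces by minimality, use irreducibility to control surface pieces, force $2$-components to be discs), though your phrase ``an essential higher-genus surface lets one simplify $M$ along it'' is off: the simplification at that stage is performed on the \emph{spine}, not on $M$, by surgering a non-disc $2$-component along a compressing curve.

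There is, however, a genuine logical gap in your plan for part~(1). You write that a minimal spine $P$ of $M_1\#M_2$ is ``by part~(2), special when the complexity is positive,'' but part~(2) applies only to \emph{prime} manifolds, and $M_1\#M_2$ is precisely the non-prime case (unless a summand is $S^3$, which is trivial). So you cannot invoke specialness of $P$ here, and without it your general-position argument for the separating sphere has no structured spine to work against. Matveev's actual proof of superadditivity proceeds differently: one puts the separating $2$-sphere in normal-like position with respect to an arbitrary minimal \emph{simple} spine and shows directly that cutting produces simple spines of the summands with the correct vertex count; the delicate point is exactly the one you flagged as ``hard,'' but it must be carried out without the crutch of specialness. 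You should rework the reverse inequality along those lines.
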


Item (1) of the previous theorem, which translates into the statement that
\emph{complexity is additive under connected sum}, means that to know the complexity
of any manifold one only needs to know that of its connected summands. And item (2) implies
that, with a few easy exceptions, \emph{the complexity of a prime manifold equals the minimal
number of tetrahedra required to triangulate it}. Note that additivity would not be true
if the definition of complexity were based on special spines, or on triangulations. Employing
simple spines one has in addition the following advantage, that proves extremely useful
in practice:

\begin{prop}\label{simple:moves:prop}
A special polyhedron to which one of the moves shown in Fig.~\ref{simplemoves:fig}
    \begin{figure}
    \begin{center}
    \includegraphics[scale=.45]{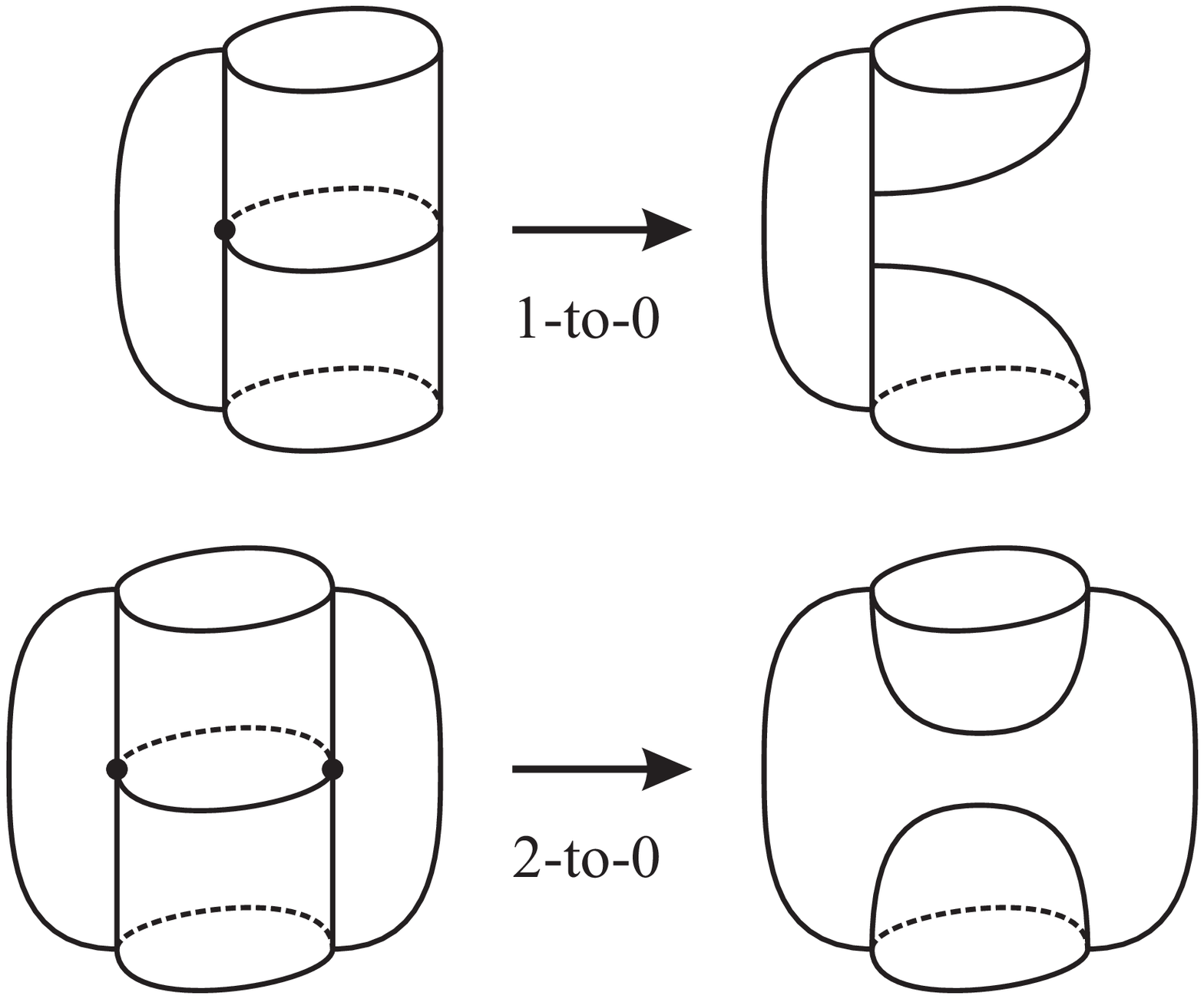}
    \mycap{Moves on special spines turning them into simple ones.}
    \label{simplemoves:fig}
    \end{center}
    \end{figure}
can be applied is not a minimal spine of a prime $3$-manifold of positive complexity.
\end{prop}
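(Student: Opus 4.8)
The plan is to argue by contradiction, using part~(2) of Theorem~\ref{add:special:thm} mainly to situate the statement: for a prime $M$ with $c(M)>0$ every minimal simple spine is already special, so a special polyhedron $P$ is exactly the kind of object that could a priori be minimal, and the content of the proposition is that the applicability of a move disqualifies it. So suppose $P$ is special, admits one of the moves of Fig.~\ref{simplemoves:fig}, and is nevertheless a minimal simple spine of some prime $M$ with $c(M)>0$; I will derive a contradiction by exhibiting a simple spine of $M$ with strictly fewer than $c(M)$ vertices.

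First I would read off from Fig.~\ref{simplemoves:fig} the simple (and no longer special) polyhedron $P'$ produced by the move, together with the local models of $P$ and of $P'$ inside the small ball where the modification is performed. The crucial step is then to check that $P'$ is still a spine of the \emph{same} manifold $M$. Since each move is supported in a ball and agrees with $P$ outside it, this reduces to a local computation: one verifies that a regular neighbourhood of the modified piece, glued to the unchanged part of $P$ along the same boundary pattern, yields a regular neighbourhood PL-homeomorphic to that of $P$ in $M$. As $M$ minus finitely many points collapses onto $P$, the same punctured $M$ then collapses onto $P'$, so $P'$ is a simple spine of $M$.

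Next comes the vertex bookkeeping. Inspecting each move, one sees that either it strictly decreases the number of vertices (the singular points as in Fig.~\ref{special:fig}-right), or it replaces a special configuration by a non-special simple one containing a free face; in the latter case an elementary collapse of $P'$ along that free face removes a $2$-cell together with at least one vertex, yielding a simple spine $P''\subseteq P'$ of $M$. Either way one obtains a simple spine of $M$ with strictly fewer vertices than $P$. But minimality of $P$ forces $P$ to have exactly $c(M)$ vertices, so this new spine would have fewer than $c(M)$ vertices, contradicting the definition of $c(M)$ as the least number of vertices of a simple spine of $M$.

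The step I expect to be the main obstacle is the second one: proving that each move genuinely preserves the homeomorphism type of the ambient manifold, i.e.\ that $P'$ is a spine of $M$ and not of some other manifold. This is where the precise shape of the moves in Fig.~\ref{simplemoves:fig} is essential, and one must check case by case that the local surgery on the polyhedron is induced by an isotopy (or a collapse) of its thickening, introducing neither a new $2$-sphere nor a new handle. By contrast, the vertex count of the third step is routine once the local pictures are fixed, and the primeness and positive-complexity hypotheses play no role in the simplification argument itself: they enter only through Theorem~\ref{add:special:thm}(2), which guarantees that the minimal spines one is excluding $P$ from are special polyhedra, so that the statement is meaningful rather than vacuous.
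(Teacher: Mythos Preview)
Your argument is correct and matches the paper's own proof, which is essentially a one-line observation that the moves of Fig.~\ref{simplemoves:fig} preserve the property of being a spine of the same manifold, yield simple polyhedra, and strictly reduce the number of vertices. Your alternative case involving a free face and a subsequent collapse is unnecessary: each move in the figure already kills at least one vertex directly, so the vertex bookkeeping is immediate once one grants (as you correctly identify as the substantive point) that the ambient manifold is unchanged.
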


The proof of this result is given by Fig.~\ref{simplemoves:fig} itself, because the moves described in it
preserve the property that a polyhedron be a spine of a manifold, they lead to simple polyhedra, and they
reduce the number of vertices.

According to Theorem~\ref{add:special:thm}(2) and Proposition~\ref{simple:moves:prop},
given $n\geq 1$, to produce the list of closed irreducible $3$-manifolds of complexity $n$,
one can proceed according to the following (partly simultaneous) steps:
\begin{itemize}
\item Recursively construct the list of all orientable special
spines with $n$ vertices of closed manifolds (or, dually,
the list of all triangulations with $n$ tetrahedra of closed orientable manifolds);
\item During the construction, check whether the configurations of Fig.~\ref{simplemoves:fig}
appear in incompletely constructed spines and, if so, discard automatically all their possible completions;
\item Once a reduced list of spines has been obtained, eliminate duplications of manifolds by
repeated applications of the $2$-to-$3$ move and its inverse, and show that the final list contains distinct
manifolds using some invariants (such as homology or the Turaev-Viro invariants~\cite{TV}).
\end{itemize}

This strategy has been carried out by Matveev for $n\leq 8$, by Martelli and the author for $n=9$
(using a substantial refinement~\cite{expmath1} of Proposition~\ref{simple:moves:prop}, based
on a certain theory of \emph{bricks}), and then independently by Martelli and Matveev
(see~\cite{Brunosurv} and the references quoted there) for larger values of $n$.
We also mention that Matveev and Tarkaev~\cite{recognizer} have written a software, based
on special spines and on the idea of applying moves to simplify them,
that allows to \emph{recognize} any given closed manifold in a very efficient way;
the web site~\cite{recognizer} also includes very helpful electronic lists of manifolds.
In addition, non-orientable versions of the census have been obtained by
Amendola and Martelli~\cite{AM1,AM2} (using results of Martelli and the author~\cite{MP:ill}
on non-orientable bricks) and by Burton~\cite{burt1,burt2}.

\paragraph{Closed hyperbolic manifolds}
Matveev showed with a (complicated) theoretical argument that no closed manifold of
complexity smaller than $9$ can be hyperbolic. The following was proved in~\cite{expmath1}:

\begin{thm}
There are precisely $4$ closed orientable hyperbolic $3$-manifolds of complexity
$9$, and they coincide with those of smallest known volume.
\end{thm}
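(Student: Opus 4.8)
The plan is to combine the census machinery of Matveev's complexity theory with the hyperbolicity-recognition tools described earlier. Since a closed hyperbolic manifold is irreducible, hence prime, and since complexity is additive under connected sum by Theorem~\ref{add:special:thm}(1), any closed orientable hyperbolic $3$-manifold of complexity $9$ must be a \emph{prime} manifold of complexity exactly $9$. Moreover, by Theorem~\ref{add:special:thm}(2), for such a prime manifold of positive complexity every minimal simple spine is special, so its complexity equals the minimal number of tetrahedra needed to triangulate it. Thus the first step is to produce the complete list of closed orientable prime $3$-manifolds of complexity $9$, by recursively enumerating all orientable special spines with $9$ vertices (equivalently, all one-vertex triangulations with $9$ tetrahedra), pruning incomplete spines that contain one of the forbidden configurations of Proposition~\ref{simple:moves:prop}, and then removing duplicate manifolds by repeated application of the $2$-to-$3$ move of Theorem~\ref{closed:calculus}. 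To make this enumeration feasible in practice I would use the refinement of Proposition~\ref{simple:moves:prop} based on the theory of \emph{bricks} developed in~\cite{expmath1}.

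Once this census is in hand, the second step is to isolate the hyperbolic manifolds from it. Every non-hyperbolic prime manifold appearing in the list falls, by the hyperbolization theorem stated above, into one of the well-understood non-hyperbolic classes (Seifert fibred spaces, or manifolds containing an essential torus), each of which can be recognized directly from the combinatorics of the spine and discarded. Combined with Matveev's theoretical result that no closed manifold of complexity below $9$ is hyperbolic, this leaves a short list of \emph{candidate} hyperbolic manifolds, all of complexity exactly $9$.

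For each candidate, the third step is a rigorous certification of hyperbolicity. Running SnapPea~\cite{SnapPea} on a triangulation produces a numerical solution of the consistency and completeness equations (Proposition~\ref{compl:eq:prop}), and hence a numerical hyperbolic structure; since these equations are algebraic with at most one solution, Snap~\cite{Snap} can then promote the numerical solution to an exact one over a finite extension of $\matQ$, thereby confirming genuine hyperbolicity without approximation. At the same time SnapPea computes the hyperbolic volume of each candidate, which by Mostow rigidity is a topological invariant.

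The final step is to verify that exactly four candidates survive, that they are pairwise distinct, and that they realize the four smallest known volumes. Distinctness is established using volume together with first homology (and, if needed, the Turaev-Viro invariants~\cite{TV}), and comparing the computed volumes against the published list of smallest-volume closed orientable hyperbolic $3$-manifolds confirms the coincidence asserted in the statement. The main obstacle is \emph{completeness} of the census: one must be certain that the recursive enumeration misses no complexity-$9$ spine and that the brick-based pruning never discards a genuine minimal spine. This is precisely the delicate point addressed by the machinery of~\cite{expmath1}, and it is what makes the exhaustive classification — rather than the mere exhibition of four examples — the substantive content of the theorem.
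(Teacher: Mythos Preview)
Your proposal is correct and follows essentially the same census-based strategy that the paper outlines (and attributes to~\cite{expmath1}): enumerate special spines with $9$ vertices, prune via Proposition~\ref{simple:moves:prop} and its brick-theoretic refinement, eliminate duplicates by $2$-to-$3$ moves, and separate the hyperbolic manifolds from the rest using invariants. One small inaccuracy: your appeal to Proposition~\ref{compl:eq:prop} and the consistency/completeness equations is phrased for \emph{cusped} manifolds, whereas here the manifolds are closed; in practice the hyperbolic structure is certified either via Casson's closed-manifold method or by exhibiting each candidate as a Dehn filling of a cusped manifold in SnapPea, but this does not affect the overall argument.
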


The $4$ manifolds referred to in the previous statement include the Weeks manifold,
now known to be the minimum-volume closed orientable hyperbolic one, thanks to
a result of Milley~\cite{milley} based on his joint work with Gabai and Meyerhoff
on \emph{Mom}'s~\cite{GMM}. Martelli~\cite{Brunosurv} found $25$ hyperbolic
manifolds in complexity $10$, and Martelli and Matveev found (the same!) few more in higher complexity.

To conclude we mention that two completely alternative approaches to
closed hyperbolic manifolds exist but will not be reviewed here. On one hand
one can obtain a wealth of them doing Dehn filling on cusped manifolds
(see Theorem~\ref{filling:thm}),
which SnapPea~\cite{SnapPea} allows to do very efficiently.
On the other hand one can try to construct a hyperbolic structure on a given closed
manifold, starting from a triangulation and using a method suggested by Casson~\cite{Casson}
(see also Manning~\cite{Manning}).

\section{Geodesic boundary and graphs}

As many of the ideas in the realm of hyperbolic geometry, those underlying the algorithmic hyperbolization
of manifolds with boundary are again due to Thurston~\cite{bible}, who first constructed such a structure
on the complement of a graph in the $3$-sphere. As in the case of cusped manifolds, where one starts from
some hyperbolic tetrahedra and imposes matching and completeness of the structure induced on
the manifold obtained by gluing them, one starts from certain parameterized
building blocks and tries to solve a system of equations. To describe the building blocks
we will need a model of hyperbolic space not employed so far, namely the \emph{projective model},
obtained by projecting radially (whence the name) the hyperboloid $\calH^n_+$ to the
unit disc $B^n_\matP$ of the hyperplane
at height $x_0=1$ in Minkowski space $\matR^{1,n}$.
The main advantage of this model is that hyperbolic straight lines appear as Euclidean straight segments
in it (but the angles are not the Euclidean ones, as it happens instead in the disc and half-space models).

We define a \emph{hyperideal hyperbolic polyhedron} as a polyhedron $P$ in the space $\matR^n$ containing
$B^n_\matP$ so that:
\begin{itemize}
\item $P$ has some \emph{hyperideal} vertices, lying outside the closure of
$B^n_\matP$, and possibly some \emph{ideal} ones, lying on the boundary of $B^n_\matP$;
\item $P$ has some genuine edges, meeting the interior of $B^n_\matP$,
and possibly some \emph{ideal} edges, tangent at one point to the boundary of $B^n_\matP$;
\item The ends of each ideal edge are hyperideal (\emph{i.e.}, not ideal) vertices of $P$.
\end{itemize}
To such a $P$ we will always associate the corresponding \emph{truncated hyperideal hyperbolic polyhedron} $\widehat{P}$,
to define which we introduce for each hyperideal vertex $v$ of $P$:
\begin{itemize}
\item The $(n-1)$-sphere $\gamma_v\subset\partial B^n_\matP$
of the tangency points to $\partial B^n_\matP$ of lines emanating from $v$;
\item The hyperplane $\alpha_v$ in $\{1\}\times\matR^n\subset\matR^{1,n}$ containing $\gamma_v$;
\item The closed half-space $Q_v$ in $\{1\}\times\matR^n$
bounded by $\alpha_v$ and not containing $v$.
\end{itemize}
Then we define the truncation $\widehat{P}$ of $P$ as the intersection of $P$ with
the $Q_v$'s as $v$ varies among the hyperideal vertices of $P$.
See Fig.~\ref{hyperid:fig} for $2$-dimensional examples of $P$ and $\widehat{P}$.
    \begin{figure}
    \begin{center}
    \includegraphics[scale=.45]{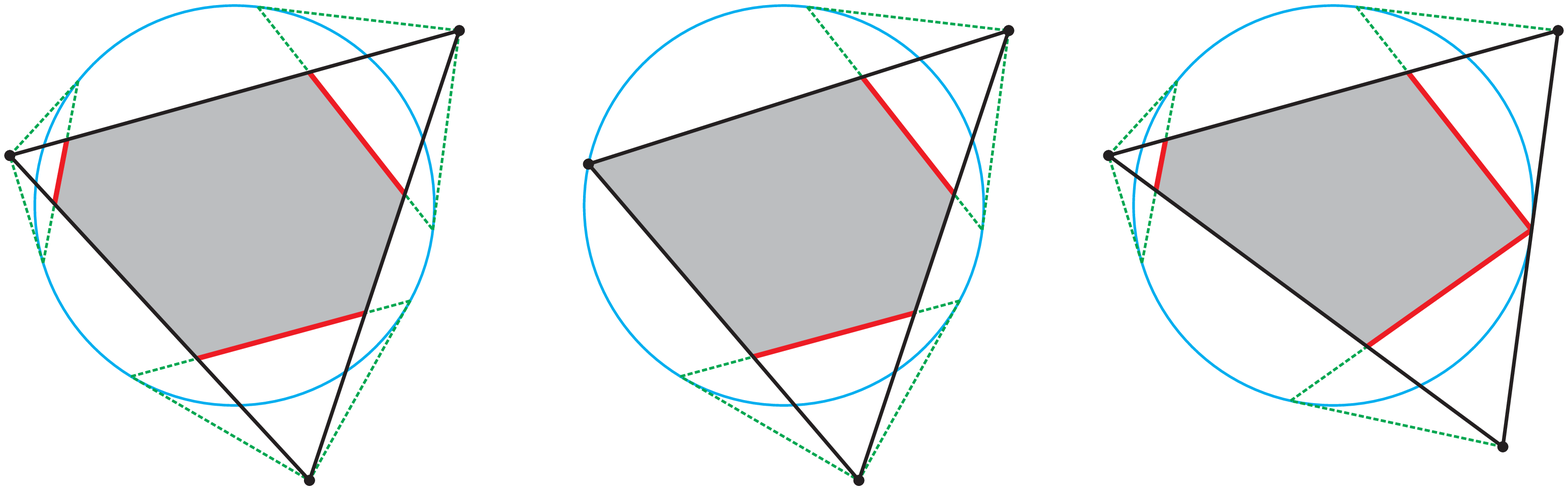}
    \mycap{A genuine hyperideal triangle and its truncation;
    a hyperideal triangle with an ideal vertex and its truncation;
    a hyperideal triangle with an ideal edge and its truncation.}
    \label{hyperid:fig}
    \end{center}
    \end{figure}
Note that one can naturally define for a truncated hyperideal hyperbolic polyhedron the
\emph{truncation faces} as those lying on the $\alpha_v$'s, and the \emph{lateral faces},
coming from the original ones. The following fact is easy to show:

\begin{lemma}
The truncation faces and the lateral faces of a
truncated hyperideal hyperbolic polyhedron lie at right angles
to each other.
\end{lemma}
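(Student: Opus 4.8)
The plan is to translate everything into the Minkowski space $\matR^{1,n}$ underlying the projective model and to exploit the polar duality between points and hyperplanes induced by $\scalgen{\cdot}{\cdot}$. Recall that a totally geodesic hyperbolic hyperplane is cut out on $\calH^n_+$ (equivalently, after radial projection, on $B^n_\matP$) by a linear hyperplane $w^\perp=\{x:\scalgen{x}{w}=0\}$ with $w$ spacelike, and that if two such hyperplanes with unit spacelike normals $w_1,w_2$ meet inside $\matH^n$ then the dihedral angle $\theta$ between them satisfies $\cos\theta=-\scalgen{w_1}{w_2}$ for a suitable orientation of the normals; in particular they meet orthogonally precisely when $\scalgen{w_1}{w_2}=0$ (see~\cite{ratcliffe}). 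So the whole statement reduces to a single orthogonality relation between two Minkowski vectors.

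First I would pin down the truncation hyperplane. Writing $\tilde v=(1,v)\in\{1\}\times\matR^n\subset\matR^{1,n}$ for a hyperideal vertex $v$, one has $\scalgen{\tilde v}{\tilde v}=\|v\|^2-1>0$, so $\tilde v$ is spacelike. A boundary point $p\in\partial B^n_\matP$ is a null vector, and the line from $v$ through $p$ is tangent to the quadric $\{\scalgen{x}{x}=0\}$ exactly when $v$ lies on the tangent hyperplane $p^\perp$ at $p$, i.e.\ when $\scalgen{\tilde v}{p}=0$. Hence every tangency point lies on $\tilde v^\perp$, which forces $\gamma_v=\tilde v^\perp\cap\partial B^n_\matP$ and therefore $\alpha_v=\tilde v^\perp\cap(\{1\}\times\matR^n)$. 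In other words $\alpha_v$ is nothing but the polar hyperplane of $v$, and the truncation face it carries is the hyperbolic hyperplane with Minkowski normal $\tilde v$.

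Next I would treat a lateral face. It lies on an affine hyperplane $\Pi$ of $\{1\}\times\matR^n$, cut out by some linear $w^\perp$, so the lateral face is the hyperbolic hyperplane with Minkowski normal $w$; since the face is genuine (meeting the interior of $B^n_\matP$) this is an honest hyperplane of $\matH^n$. The only lateral faces that a given truncation face $\alpha_v$ meets are those coming from faces of $P$ incident to $v$, and each such face contains $v$ as a vertex, so $\tilde v\in\Pi$, that is $\scalgen{\tilde v}{w}=0$. By the angle formula recalled above this says exactly that the truncation face on $\alpha_v$ and the lateral face meet at a right angle, proving the lemma.

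I expect the only real subtlety to be the computation identifying $\alpha_v$ with the polar hyperplane $\tilde v^\perp$ — that is, checking that the locus $\gamma_v$ of tangency points is governed by $\scalgen{\tilde v}{\cdot}=0$ — together with the bookkeeping that a truncation face interacts only with the lateral faces through its vertex; once polar duality is in place, the orthogonality is immediate from the sign-free fact that $\scalgen{w_1}{w_2}=0$ characterizes perpendicular hyperbolic hyperplanes.
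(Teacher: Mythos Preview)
Your argument is correct: identifying $\alpha_v$ as the polar hyperplane $\tilde v^\perp$ (which your tangency computation verifies), observing that any face of $P$ through $v$ lies on a linear hyperplane $w^\perp$ with $\tilde v\in w^\perp$, and then invoking the Minkowski criterion $\scalgen{w_1}{w_2}=0$ for perpendicularity of hyperbolic hyperplanes is exactly the right route, and all steps are sound.

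As for comparison: the paper does not actually prove this lemma --- it merely introduces it with ``The following fact is easy to show'' and moves on. Your write-up supplies the missing justification in the natural way. The polar-duality viewpoint you use is the standard one (and is implicit in the paper's later use of $\calS^n$ and the half-spaces $Q_y$ with $y\in\calS^n$ in the Kojima construction), so there is no tension with the paper's framework.

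One small wording point: rather than saying ``for a suitable orientation of the normals'' when stating $\cos\theta=-\scalgen{w_1}{w_2}$, you might simply note that orthogonality is the sign-independent case $\scalgen{w_1}{w_2}=0$, which is all you need; you already flag this in your final paragraph, so this is cosmetic.
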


This implies that the hexagon in Fig.~\ref{hyperid:fig}-left
is right-angled (even if one would not be able to tell from the picture),
and that the two pentagons in Fig.~\ref{hyperid:fig}-center and -right are right-angled
at the non-ideal vertices.

Turning to dimension $3$, the next result follows from the previous one and will be needed below:

\begin{lemma}\label{rectangle:lem}
If $v$ is the ideal point at which an ideal edge of a
truncated hyperideal hyperbolic polyhedron $\widehat{P}$ meets
$\partial B^3_\matP$, then the intersection of $\widehat{P}$
with a sufficiently small horosphere centered at $v$ is a Euclidean rectangle.
\end{lemma}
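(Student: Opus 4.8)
The plan is to normalize by a hyperbolic isometry so that the ideal point $v$ becomes $\infty$ in the upper half-space model $\pi^3_+$, and then to identify the faces of $\widehat P$ that bound it near $v$ and read off their configuration from the model. Recall that an ideal edge of $P$ runs between two hyperideal vertices $v_1$ and $v_2$ and is tangent to $\partial B^3_\matP$ at $v$. First I would treat the two lateral faces $\Pi_1,\Pi_2$ of $\widehat P$ that meet along this ideal edge. Each of them contains the edge, hence contains a line tangent to $\partial B^3_\matP$ at $v$, so $v$ is an ideal point of both $\Pi_1$ and $\Pi_2$; in the half-space model with $v=\infty$ they therefore appear as vertical geodesic planes. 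Since $\Pi_1$ and $\Pi_2$ meet precisely along the ideal edge, which is tangent at $v=\infty$ and so lies on the sphere at infinity rather than inside $\matH^3$, the two geodesic planes share only the ideal point $v$ and no finite geodesic: their boundary traces are parallel lines, i.e.\ $\Pi_1$ and $\Pi_2$ are parallel vertical planes.

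Next I would locate the two truncation faces. The key observation is that the sub-segment $v_1v$ runs along a line emanating from the hyperideal vertex $v_1$ and tangent to $\partial B^3_\matP$; since the tangent lines from $v_1$ touch $\partial B^3_\matP$ exactly along the circle $\gamma_{v_1}$, the tangency point $v$ lies on $\gamma_{v_1}$ and hence on the truncation plane $\alpha_{v_1}$. The same argument applies to $\alpha_{v_2}$. Thus both truncation faces also have $v$ as an ideal point and, in the half-space model, likewise appear as \emph{vertical} geodesic planes. For a sufficiently small horosphere centered at $v=\infty$—that is, a sufficiently high horizontal plane—these four vertical faces $\Pi_1,\Pi_2,\alpha_{v_1},\alpha_{v_2}$ are precisely the faces of $\widehat P$ that reach up into the cusp at $v$, so they alone cut out the horospherical cross-section.

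It then remains to pin down the angles. By the orthogonality of truncation and lateral faces established in the preceding lemma, each of $\alpha_{v_1},\alpha_{v_2}$ is perpendicular to each of $\Pi_1,\Pi_2$. Passing to the horizontal horosphere, the traces of these vertical planes are Euclidean straight lines, and orthogonality of the planes translates into orthogonality of their traces. Since $\alpha_{v_1}$ and $\alpha_{v_2}$ are both perpendicular to $\Pi_1$, their traces are parallel to each other; combining this with the parallelism $\Pi_1\parallel\Pi_2$ already obtained, one gets two pairs of parallel lines meeting at right angles. The horospherical cross-section of $\widehat P$ is therefore the bounded region they enclose, namely a Euclidean rectangle, which is exactly the assertion.

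The main obstacle I anticipate lies in the second step: verifying cleanly that the tangency point $v$ lies on each polar plane $\alpha_{v_i}$ (the tidy way is precisely the remark that tangent lines from a hyperideal point meet $\partial B^3_\matP$ along $\gamma_{v_i}$), and confirming that no further face of $\widehat P$ intrudes into a small enough horoball neighborhood of $v$, so that the cross-section is genuinely a quadrilateral bounded by these four sides and not by additional ones. Once these two points are secured, the rest is a direct reading of parallelism and right angles in the half-space model.
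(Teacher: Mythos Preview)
Your argument is correct and is precisely a detailed unfolding of what the paper leaves implicit: the paper merely asserts that the lemma ``follows from the previous one'' (the orthogonality of truncation and lateral faces), whereas you spell out explicitly why exactly four faces of $\widehat P$ reach the ideal point $v$, why the two lateral ones are parallel vertical planes in the half-space model, and why the two truncation planes pass through $v$ via the polar-circle observation. Nothing is missing; your version is simply more explicit than the paper's one-line justification.
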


\paragraph{Moduli and hyperbolicity equations}
Let us from now on restrict our discussion to dimension $3$.
Given a compact manifold $M$ with boundary, or more generally a pair $(M,\calA)$ where
$\calA\subset\partial M$ is a family of closed annuli, one tries to construct on $M$
a hyperbolic structure such that the non-toric components of $\partial M\setminus \calA$
are totally geodesic (so the components of $\calA$ give annular cusps,
whereas the tori give toric cusps). The way to do this algorithmically is again to start
from an ideal triangulation $\calT$, encode by certain modules the structures of truncated
hyperideal hyperbolic tetrahedra one can put on the tetrahedra of $\calT$, and
try to solve certain equations on the modules that translate the fact that the structures of the tetrahedra
match to give a global complete hyperbolic structure on $M$ of the appropriate type.
As for modules, the following was shown in~\cite{fujii}
(see also~\cite{fp}):

\begin{prop}
The space of modules for a hyperideal hyperbolic tetrahedron is given by the $6$ dihedral
angles, that vary freely subject to the following restrictions:
\begin{itemize}
\item The sum of the angles at a hyperideal vertex is less than $\pi$;
\item The sum of the angles at an ideal vertex is equal to $\pi$;
\item The angle at an ideal edge is $0$.
\end{itemize}
\end{prop}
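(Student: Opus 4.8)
The plan is to parametrize a hyperideal tetrahedron by its six dihedral angles and show directly that these are free parameters subject exactly to the three stated constraints. I would begin by recalling the key geometric dictionary for hyperideal vertices: a hyperideal vertex $v$ lies outside $\overline{B^3_\matP}$, and its truncation plane $\alpha_v$ cuts off a truncation face that meets the three lateral faces at $v$ orthogonally, by the preceding lemma on right-angle truncations. The crucial observation is that the link of a truncated hyperideal vertex is a small hyperbolic triangle (the truncation triangle), whose interior angles are precisely the three dihedral angles of the tetrahedron along the edges emanating from $v$. Since a hyperbolic triangle exists if and only if its angle sum is strictly less than $\pi$, and the three angles are otherwise unconstrained, this immediately yields the first condition. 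The analogous computation for an ideal vertex uses Lemma~\ref{rectangle:lem} and the fact that the link of an ideal vertex is a Euclidean object; a Euclidean triangle has angle sum exactly $\pi$, giving the second condition. For an ideal edge, the two endpoints are hyperideal vertices whose truncation planes must become tangent at the ideal point, which forces the dihedral angle along that edge to degenerate to $0$, giving the third condition.

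The main work, and the part requiring genuine care, is establishing the \emph{converse}: that any assignment of six angles satisfying the three restrictions is actually realized by a genuine truncated hyperideal tetrahedron, and uniquely so. For this I would pass to the Minkowski/hyperboloid picture. Each face of $P$ lies on a hyperplane in $\matR^{1,3}$, represented by a spacelike unit normal vector; the Gram matrix $G$ of these four normals encodes all the dihedral angles via $G_{ij}=-\cos\theta_{ij}$ for adjacent faces. The realizability question thus becomes: given the off-diagonal entries prescribed by the six angles, when does the resulting symmetric $4\times 4$ matrix $G$ arise as the Gram matrix of four vectors spanning a Lorentzian frame, with the four vertices forced to be hyperideal (spacelike), ideal (lightlike), or to yield tangent truncation planes? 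The hyperideal condition on a vertex translates into a signature/sign condition on the relevant $3\times 3$ principal minor of $G$, and one checks that this minor condition is equivalent to the angle sum at that vertex being less than $\pi$.

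The hard part will be verifying that the signature conditions on $G$ line up \emph{exactly} with the three angle restrictions, with no hidden additional inequalities, and that the realizing polyhedron is genuinely embedded rather than degenerate or self-intersecting. Concretely, I expect the obstacle to be controlling the global sign data: one must confirm that $G$ has Lorentzian signature $(1,3)$ (so that four spacelike normals do bound a hyperideal simplex in $\matR^{1,3}$), and that each vertex, obtained by intersecting three face-planes, falls on the correct side of the light cone precisely when the corresponding angle sum obeys its constraint. I would organize this as a case analysis over the vertex types (hyperideal, ideal, and edges degenerating to ideal), reducing each to a determinant computation for the appropriate principal minor and invoking the standard correspondence between the sign of such a minor and the spacelike/lightlike/timelike character of the dual point. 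Since the statement is attributed to~\cite{fujii} and only needs to be recorded here, I would cite that source for the full determinant analysis and present the link-of-a-vertex argument above as the conceptual proof, which already makes all three conditions transparent and is self-contained given the lemmas proved earlier in this section.
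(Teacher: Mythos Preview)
The paper does not actually prove this proposition: it simply records it as a result due to Fujii~\cite{fujii} (with a pointer to~\cite{fp}), and moves on. There is no argument given beyond the citation.

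Your proposal therefore goes considerably beyond what the paper does. The sketch you give is correct in outline: the necessity of the three constraints is exactly the vertex-link argument you describe (hyperbolic truncation triangle at a hyperideal vertex, Euclidean triangle at an ideal vertex, tangency of truncation planes forcing zero angle along an ideal edge), and the Gram-matrix approach in $\matR^{1,3}$ is the standard route to the converse, matching what is done in~\cite{fujii} and~\cite{fp}. Your closing remark---that for the purposes of this survey one should present the link-of-a-vertex heuristic and defer the full determinant/signature analysis to the cited sources---is precisely in line with the paper's treatment, indeed already more generous than it. One minor correction: Lemma~\ref{rectangle:lem} concerns ideal \emph{edges}, not ideal vertices, so for the Euclidean-triangle link at an ideal vertex you would appeal instead to the standard horospherical cross-section argument (as in the discussion of ideal tetrahedra in Section~3) rather than to that lemma.
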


Since this will be needed soon, we also mention that,
given a choice of modules for a hyperideal hyperbolic tetrahedron $\Delta$, the lengths
of all the lateral and truncation edges of the corresponding truncated $\widehat{\Delta}$ are computed by explicit
formulae to be found in~\cite{fp}. Note that an ideal edge always has length $0$, and an edge
with one or both ideal ends has length $+\infty$.

Turning to the global matching of the structures on the individual tetrahedra of a triangulation,
we begin from the case of a pair $(M,\calA)$ with $\calA=\emptyset$, where one starts from an ideal
triangulation $\calT$ of a compact $M$ in the usual sense. (The case $\calA\neq\emptyset$ requires important
variations discussed below.) In this case the matching equations express the fact that
the lengths of all the truncation and lateral edges should be matched by the gluings of $\calT$.
But, as a matter of fact, since a right-angled
hyperbolic hexagon with finite vertices is determined
by the lengths of three pairwise non-consecutive edges, the requirement that \emph{all} lengths
should be matched is typically redundant.

As just described, the matching equations for hyperideal tetrahedra
are quite different than those for ideal tetrahedra. In particular, we stress that
the formulae to compute the lengths of the edges involve trigonometric and hyperbolic functions,
so the equations are \emph{not} algebraic ones. On the other hand
the completeness equations are precisely the same: the modules give
Euclidean structures up to similarity on the triangles of which each boundary torus
is constituted, and completeness translates into the fact that each such torus
should be Euclidean, and in turn into explicit equations in the modules
along the lines of Proposition~\ref{compl:eq:prop}.

\paragraph{Annular cusps}
For a pair $(M,\calA)$ with $\calA\subset\partial M$ a family of closed annuli, the approach
to hyperbolization using triangulations requires an important variation. To understand it,
suppose that $(M,\calA)$ has a decomposition $\calT$ into truncated hyperideal hyperbolic tetrahedra.
Then an annular cusp $A\in\calA$ corresponds to an ideal edge of $\calT$.
More precisely, one obtains the compact pair $(M,\calA)$ by first taking the compact manifold
obtained by gluing the truncated versions of the tetrahedra in $\calT$, and then
digging open cylindrical tunnels along the edges of $\calT$. This means that $\calT$ itself
is not an ideal triangulation of $M$. On the contrary, the following holds:

\begin{prop}
The ideal triangulations required to hyperbolize a pair $(M,\calA)$ are
those of the form $(\calT,a)$, where:
\begin{itemize}
\item $\calT$ is an ideal triangulation of the manifold $N$ described next, and $a$ is a family of edges of $\calT$;
\item $N$ is the manifold obtained from $M$ by gluing a solid cylinder
(a $2$-handle) to each annulus in $\calA$;
\item The family of edges $a$, viewed in $N$, is precisely the family of cores of the solid cylinders glued to $M$ to get $N$;
\item When choosing modules for the tetrahedra in $\calT$, the ideal edges should be precisely those in $a$.
\end{itemize}
\end{prop}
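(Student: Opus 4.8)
The plan is to establish a precise dictionary between decompositions of $(M,\calA)$ into truncated hyperideal tetrahedra and ideal triangulations $(\calT,a)$ of $N$ with a marked family of edges, and then to read off the four asserted properties from this dictionary; once the geometry of a single block is understood the argument is purely topological. So I would begin with the local model of one block. Forgetting the truncation, that is regarding every hyperideal and every ideal vertex of $\widehat\Delta$ as an ordinary ideal vertex, the underlying combinatorial object is a topological ideal tetrahedron $\Delta$. Near the ideal point of an ideal edge, Lemma~\ref{rectangle:lem} tells us that a small horospherical section of $\widehat\Delta$ is a Euclidean rectangle; pushing the horosphere to infinity this rectangle sweeps out a wedge-shaped cusp, and the claim I would verify is that $\widehat\Delta$ is homeomorphic to $\Delta$ with an open dihedral-wedge neighborhood of each ideal edge removed, the truncation faces and lateral faces accounting for the rest of $\partial\widehat\Delta$.

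Next I would glue the local pictures along the lateral-face identifications of $\calT$. Since these identifications are isometries of lateral faces, they are compatible with both interpretations: gluing the topological ideal tetrahedra $\Delta$ produces an ideal triangulation $\calT$ of a cusped manifold $N$, whereas gluing the truncated blocks with their wedges removed produces $(M,\calA)$. The point is that around each ideal edge the individual wedges contributed by the incident tetrahedra fit together into one full open tubular neighborhood of that edge, so that removing them is exactly the tunnel-digging described before the statement; the wall of each such tunnel is a closed annulus, and the collection of these annuli is $\calA$. Because drilling a tubular neighborhood of an edge and refilling it are inverse operations, filling the tunnels back in recovers $N$ from $M$: each refilled tunnel is a solid cylinder glued to $M$ along one annulus of $\calA$, that is a $2$-handle, and its core is precisely the edge of $\calT$ that was drilled. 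This yields at once the second and third bullets, and it also explains the boundary of $M$, since drilling an edge of $a$ modifies the torus cusps at its ends into a higher-genus boundary component carrying the annulus, the complement of $\calA$ in $\partial M$ becoming the geodesic boundary coming from the truncation faces, while the cusps of $N$ untouched by $a$ survive as the toric cusps of $M$.

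It then remains to match the hyperbolic data. By the description of the moduli of a hyperideal tetrahedron recalled above, an edge carries dihedral angle zero exactly when it is ideal, and the requirement there that the ends of an ideal edge be hyperideal (rather than ideal) vertices forces the vertices met by $a$ to be truncated ones. Hence a system of modules on the tetrahedra of $\calT$ realizes the prescribed pair $(M,\calA)$ if and only if its ideal edges are exactly the members of $a$, which is the fourth bullet. Running the correspondence in both directions, from a truncated hyperideal decomposition of $(M,\calA)$ to the marked ideal triangulation $(\calT,a)$ of $N$ and back, then gives the ``if and only if'' form of the statement.

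The step I expect to be the main obstacle is the local model of the first paragraph: one must check that, after removing the wedges, the combinatorial boundary pattern of $\widehat\Delta$ (truncation faces, lateral faces, and the rectangular annular sections) matches exactly that of a tunnelled ideal tetrahedron, \emph{uniformly} in the number and position of the ideal edges and ideal vertices, and that this matching is preserved by the face gluings so that $\calA$ and $a$ are globally well defined. The cases to keep track of are an edge of $a$ joining two distinct cusps versus an edge of $a$ forming a loop at a single cusp, and tetrahedra incident to an ideal edge on both sides; verifying that the wedges still assemble into an embedded solid cylinder in each case is where the care is needed.
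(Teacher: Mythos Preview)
The paper does not actually give a formal proof of this proposition: it is stated immediately after the short paragraph explaining that, if $(M,\calA)$ has a decomposition into truncated hyperideal tetrahedra, then each annular cusp corresponds to an ideal edge, and that one obtains $(M,\calA)$ by gluing the truncated tetrahedra and then ``digging open cylindrical tunnels along the edges''. The proposition is simply the formalization of that discussion, and no further argument is supplied.

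Your proposal is precisely a careful fleshing-out of that same discussion. You set up the local model of a single truncated hyperideal tetrahedron as an ideal tetrahedron with wedge neighborhoods of the ideal edges removed (using Lemma~\ref{rectangle:lem}), glue along lateral faces to see that the wedges assemble into tubular neighborhoods of the edges in $a$, and observe that refilling these tunnels with solid cylinders gives $N$ with $a$ as the family of cores; the identification of ideal edges with zero dihedral angle then gives the last bullet. This is exactly the mechanism the paper sketches in words, so your approach is the same as the paper's, only made more explicit. The cautionary cases you flag at the end (loops in $a$, tetrahedra incident to an ideal edge on both sides) are reasonable to check but are not treated in the paper either; they do not require new ideas beyond the observation that the wedges around an edge are indexed by the local incidences, so they still patch up to a single solid cylinder.
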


The case with annular cusps requires the initial subtlety just described,
and one more. The point is that
there is one very special case where two ``hexagons'' with the same ordered lengths
of the edges need not be isometric, so the matching of lengths of the edges is
not sufficient to ensure consistency of the hyperbolic structure carried
by a choice of modules for the tetrahedra in a triangulation,
and an additional equation must be added.
This occurs when a hexagon has one
ideal edge and an opposite ideal vertex,
so it reduces to a quadrilateral with two ideal and two finite vertices.
The extra parameter describing the shape of such an object is described in~\cite{fp}
together with the method to compute it starting from the modules.

Fortunately enough, after these two complications, we can show that
in dealing with annular cusps no completeness issues arise:

\begin{prop}
Consider a (possibly incomplete) hyperbolic structure on $(M,\calA)$ given by a solution of the matching
equations for a triangulation $(\calT,a)$ as in the previous result. Then the structure is
automatically complete at the annular cusps $\calA$.
\end{prop}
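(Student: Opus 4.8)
The plan is to reduce completeness at an annular cusp to a single statement about holonomy and then to extract that statement from the rectangular cross-sections of Lemma~\ref{rectangle:lem}. Recall that each annular cusp corresponds to an ideal edge $e$ of the triangulation $(\calT,a)$, tangent to the sphere at infinity at one ideal point $v$ and with both endpoints hyperideal, hence truncated. First I would lift to the half-space model $\pi^3_+$ and normalise so that $v=\infty$; a small horosphere centred at $v$ is then a horizontal Euclidean plane into which the cross-section of the cusp develops. Since the structure comes from a solution of the matching equations it is consistent, so the developing map is well defined and the holonomy of the core curve of the annulus (the meridian of $e$) is a well-defined orientation-preserving similarity $g$ of this horospherical plane. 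Completeness of the cusp is exactly the assertion that $g$ is a Euclidean translation, i.e.\ that the meridian is parabolic, so this is what I would aim to prove.

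The next step is to pin down the developed picture. By Lemma~\ref{rectangle:lem} each truncated tetrahedron incident to $e$ meets the horosphere in a Euclidean rectangle, with two opposite sides on the two lateral faces adjacent to $e$ and the other two on the truncation faces at the hyperideal endpoints $w_1,w_2$ of $e$. Both truncation faces contain $v$: indeed $e$ issues from each $w_i$ and is tangent to the sphere at $v$, so $v$ lies on the tangency locus $\gamma_{w_i}$ and hence on the truncation face at $w_i$; in $\pi^3_+$ these faces are therefore vertical planes meeting the horosphere in straight lines. Likewise the two lateral faces, being tangent to one another at $v$ because the dihedral angle along an ideal edge is $0$, are asymptotically parallel vertical planes. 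Thus all the rectangles lie in one horizontal strip bounded by the two truncation lines, the two boundary components of the annulus develop onto the two boundary lines of this strip, and gluing successive tetrahedra along lateral faces amounts to translating along the strip. The point that genuinely needs care is that each boundary component develops to a single \emph{straight} line, which is the same as saying that the truncation faces glue up to \emph{totally geodesic} boundary. I would verify this from the right-angle property of truncation versus lateral faces together with the matching of truncation edges: two truncation faces of adjacent tetrahedra are both orthogonal to the shared lateral face and both contain the identified truncation edge on it, and a geodesic plane orthogonal to a given plane and containing a prescribed geodesic of it is unique, so the two truncation faces coincide. Hence each boundary component of the annulus lies on a single vertical plane and develops to one straight line.

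Finally I would run the elementary similarity computation. The holonomy $g\colon z\mapsto az+b$ carries the strip to itself and fixes each of its two horizontal boundary lines, since the core curve preserves each boundary circle of the annulus. Preservation of a horizontal line forces $a$ and $b$ to be real, and preservation of the second horizontal line at positive height then forces $a=1$; hence $g$ is a horizontal translation, which is precisely a parabolic and yields a complete Euclidean annular cusp $A\times[0,+\infty)$, with no completeness equation required. I expect the main obstacle to be the middle step, namely establishing rigorously that the boundary develops to two parallel straight lines and that $g$ preserves the resulting strip; once the totally geodesic nature of the boundary and the rectangular cross-sections of Lemma~\ref{rectangle:lem} are in hand, the final conclusion is forced.
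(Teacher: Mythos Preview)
Your argument is correct, but it takes a different and longer route than the paper's. The paper observes that, by Lemma~\ref{rectangle:lem}, the horospherical cross-section at an annular cusp is obtained by gluing Euclidean rectangles along opposite sides, hence is already a genuine Euclidean annulus whose two boundary circles have equal length (each rectangle contributes equal segments to both). Doubling this annulus yields an honest Euclidean torus, and then Proposition~\ref{compl:prop} (the completeness criterion for toric cusps, applied to the double $D(M)$) finishes the proof in one line.

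Your approach instead stays with the annulus and computes the holonomy of the core curve directly, showing that a similarity preserving two parallel lines must be a translation. This is self-contained and gives a clear geometric picture, but the middle step you flag as the main obstacle is in fact much easier than you make it: once you know the cross-sections are rectangles, collinearity of the boundary segments is immediate, since at each gluing vertex two right angles sum to~$\pi$. The orthogonal-plane uniqueness argument you give is correct but unnecessary. The paper's doubling trick buys brevity and reuse of the toric criterion; your direct holonomy computation buys independence from that criterion and makes explicit why no completeness equation is needed.
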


\begin{proof}
By Lemma~\ref{rectangle:lem} a horospherical cross-section at some $A\in\calA$
is obtained by gluing some rectangles, so it is a Euclidean annulus with boundary
components of equal length. The double of such an annulus is a Euclidean torus (and not merely
a similarity one), and the conclusion easily follows from Proposition~\ref{compl:prop}.
\end{proof}

\paragraph{Canonical decomposition}
The recognition of hyperbolic $3$-manifolds with geodesic boundary is based on an analogue
for this type of manifolds of the Epstein-Penner canonical decomposition, due to Kojima~\cite{koji1,koji2}.
For a pair $(M,\calA)$ without toric cusps (but $\calA$ can be non-empty, so annular cusps are allowed)
the Kojima decomposition is a subdivision of $M$ into truncated hyperideal hyperbolic polyhedra,
possibly with ideal edges but without ideal vertices, and it is simply dual to the cut-locus of the
boundary, as illustrated in Fig.~\ref{cutlocus:fig}. This definition is of course of impractical use,
but Kojima proved an analogue of Proposition~\ref{hull:prop} that allows the actual computation
of the canonical decomposition. To state this result we need to recall more of the geometry of
the hyperboloid model $\calH^n_+$ of hyperbolic space. We define the $1$-sheeted hyperboloid
$$\calS^n=\{y\in\matR^{1,n}:\ \scalgen xx=+1\}$$
and, for $y\in\calS^n$,
$$Q_y=\{x\in\calH^n_+:\ \scalgen xy\leq 0\},$$
noting that $Q_y$ is a geodesic half-space in $\calH^n_+$, and that each such half-space has
the form $Q_y$ for a unique $y\in\calS^n$.

Let us now consider a hyperbolic $(M,\calA)$ without toric cusps and recall that the hyperbolic structure induces
an identification between its universal cover and an intersection of closed geodesic half-spaces
in $\matH^3$. Using the hyperboloid model $\calH^3_+$ we then have $\widetilde{M}=\bigcap\{Q_y:\ y\in\calP\}$
for some family of points $\calP\subset\calS^3$. As in the cusped case we now define $C$ as the
convex hull of $\calP$ in $\matR^{1,3}$. Kojima proved the following result, stated
in a rather informal way here (but carefully stated and proved in~\cite{fp}):

\begin{prop}
If $(M,\calA)$ is hyperbolic without toric cusps then
the canonical decomposition of $(M,\calA)$ dual to the cut-locus of the boundary
is obtained by projecting radially to $\calH^3_+$ the faces of $\partial C$
that meet the positive time-like half-lines.
\end{prop}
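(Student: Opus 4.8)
The plan is to transpose the Epstein--Penner argument behind Proposition~\ref{hull:prop} to the de~Sitter picture, replacing the dictionary ``cusp $\leftrightarrow$ horoball $\leftrightarrow$ point of $\calL^3_+$'' by ``geodesic boundary plane $\leftrightarrow$ half-space $Q_y$ $\leftrightarrow$ point $y\in\calS^3$.'' First I would record the metric meaning of the pairing: for $y\in\calS^3$ the set $P_y=\{x:\scalgen xy=0\}$ is the geodesic plane bounding $Q_y$, and for $x\in\calH^3_+\cap\widetilde{M}$ (so $\scalgen xy\le 0$) one has $-\scalgen xy=\sinh d(x,P_y)$. Consequently the boundary plane nearest to $x$ is the one realizing $\max_{y\in\calP}\scalgen xy$, and $x$ projects to a point of the cut-locus $\cut(\partial M)$ exactly when this maximum is attained by at least two $y$'s giving distinct feet of perpendicular. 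Thus the cut-locus is governed entirely by the piecewise-linear function $x\mapsto\max_{y\in\calP}\scalgen xy$.

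Next I would read this maximization off the convex hull $C=\mathrm{conv}(\calP)$. For fixed $x\in\calH^3_+$ the linear functional $z\mapsto\scalgen xz$ attains its maximum over $C$ on a face $F_x\subset\partial C$ whose vertices are precisely the $y\in\calP$ nearest to $x$; since $x$ is future time-like, the supporting hyperplane with normal $x$ is met by every future time-like half-line, so the faces arising this way are exactly those of $\partial C$ ``meeting the positive time-like half-lines.'' The assignment $x\mapsto F_x$ therefore sets up an order-reversing correspondence between the strata of the nearest-plane stratification of $\calH^3_+$ and the faces of $\partial C$ of this type: a $3$-dimensional facet of $\partial C$ (four or more nearest planes) projects radially to a top-dimensional cell of the decomposition and is dual to a vertex of $\cut(\partial M)$, while its $2$-, $1$- and $0$-faces are dual respectively to the edges, $2$-cells and boundary components of the spine. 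Here I would invoke Kojima's analogue of Proposition~\ref{hull:prop}: that $\calP$ is discrete, that $C$ is a finite-faced, $\pi_1(M)$-invariant polyhedral body, and that radial projection is a bijection from the time-like part of $\partial C$ onto $\calH^3_+$. Modulo this, the correspondence $x\mapsto F_x$ descends to the asserted duality in $M$.

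It then remains to identify the projected facets as \emph{truncated hyperideal} polyhedra. A facet $\Phi$ has its vertices at space-like points $y\in\calP$, so under radial projection these vertices are hyperideal; the polyhedron is the image of the time-like part of $\Phi$, automatically truncated where $\Phi$ exits the time cone. The truncation face cut off at a vertex $y$ lies on $\partial Q_y=P_y$, i.e.\ on the actual geodesic boundary, as it must; ideal edges and the rectangular horospherical cross-sections at ideal points (the annular cusps $\calA$) are then controlled by Lemma~\ref{rectangle:lem}. Assembling these facets over the $\pi_1(M)$-orbit and pushing down to $M$ produces exactly the decomposition into truncated hyperideal polyhedra dual to the cut-locus.

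The hard part will be the two points where the de~Sitter setting genuinely differs from the light-cone one. First, unlike the action on horoballs, the $\pi_1(M)$-action on $\calS^3$ need not be properly discontinuous, so the discreteness of $\calP$ and the finite-facedness of $C$ must be extracted from the geometry of the boundary rather than obtained for free; this is precisely where the hypothesis of \emph{no toric cusps} is used, since a toric cusp would let dual points accumulate toward $\calL^3_+$. Second, and most delicately, one must reconcile the combinatorial ``nearest dual point'' relation with the genuine Riemannian distance to $\partial\widetilde{M}$: a tie in $\max_{y}\scalgen xy$ must be shown to yield two honest shortest geodesics, and one must verify that the foot of the perpendicular from $x$ to a maximizing plane actually lands in the corresponding boundary face (while handling the strata where the nearest point sits on an edge at which two planes meet). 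A convexity argument for $\widetilde{M}=\bigcap_y Q_y$ should close this gap, but it is the step that makes the identification with the cut-locus, rather than with a merely convex-combinatorial object, rigorous.
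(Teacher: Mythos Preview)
The paper does not actually prove this proposition: it is stated as Kojima's result, with the careful formulation and proof deferred to the references \cite{koji1,koji2} and \cite{fp}. There is therefore no ``paper's own proof'' to compare against beyond the informal convex-hull description preceding the statement.

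That said, your sketch is a faithful outline of the argument as it is carried out in those sources. The dictionary replacing ``horoball $\leftrightarrow$ point of $\calL^3_+$'' by ``half-space $Q_y$ $\leftrightarrow$ point $y\in\calS^3$,'' the identity $-\scalgen xy=\sinh d(x,P_y)$, the recasting of the cut-locus as the coincidence set of the piecewise-linear function $x\mapsto\max_{y\in\calP}\scalgen xy$, and the reading of its combinatorics off the face structure of $\partial C$ are exactly Kojima's mechanism. You have also correctly isolated the two places where the de~Sitter picture is genuinely harder than the light-cone one: the discreteness and local finiteness of $\calP$ (which in \cite{fp} is obtained from compactness of the boundary, hence the no-toric-cusps hypothesis), and the verification that the combinatorial nearest-plane relation agrees with the metric nearest-boundary-point relation, i.e.\ that the foot of the perpendicular to the maximizing $P_y$ lies in $\partial\widetilde{M}$. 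The latter does follow from the convexity of $\widetilde{M}=\bigcap_y Q_y$, as you suspect. So your plan is sound and matches the literature; what remains is exactly the technical work you flagged.
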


Turning to the case of a hyperbolic manifold with geodesic boundary $(M,\calA)$ also having
toric cusps, we briefly mention that a canonical decomposition has been constructed by Kojima also
in this case. The argument is somewhat more complicated, the main steps being as follows:
\begin{itemize}
\item Let $\calP_\calS\subset\calS^3$ be the family of points such that
the universal cover of $(M,\calA)$ is $\bigcap\{Q_y:\ y\in\calP_\calS\}$;
\item Let $\calP_\calL\subset\calL^3_+$ be the family of points such that
the family of horoballs
$\{B_y:\ y\in\calP_\calL\}$ projects in $(M,\calA)$ to a family of
equal-volume ``sufficiently small'' disjoint toric cusps;
\item Let $\calP=\calP_\calS\cup\calP_\calL$ and define $C$ as the convex hull
in $\matR^{1,3}$ of $\calP$;
\item Then the canonical decomposition of $(M,\calA)$ is obtained by
projecting radially to $\calH^3_+$ the faces of $\partial C$
that meet the positive time-like half-lines,
and then suitably subdividing those arising from vertices in $\calP_\calL$.
\end{itemize}
We only mention that how small the toric cusps should be in order for this
construction to work was left implicitly determined by Kojima,
and was later spelled out in a quantitative fashion in~\cite{fp}.

\paragraph{Algorithmic recognition}
While enumerating some class of hyperbolic manifolds with geodesic boundary,
for each manifold one
constructs the structure using a triangulation (which, in practice, always works for minimal
triangulations if there are no topological obstructions to hyperbolicity), and then
one is faced with the issue of algorithmically finding the Kojima canonical decomposition.
The strategy to do so is the same as in the cusped context: starting from the given triangulation
one tries to decide whether its tetrahedra represent the projections
of the faces of $\partial C$, which amounts to checking whether the angles between
suitable liftings of the tetrahedra, determined by the global geometry, are convex if viewed from the origin.
And this can be carried out using an extension of the Sakuma-Weeks tilt formula, due to
Usijima~\cite{akira:deco} and carefully described in~\cite{fp}.
If some concave angle is found the combinatorics of the triangulation is changed by performing
the $2$-to-$3$ move along the offending face, until the process gets stuck (which never happens in practice)
or the Kojima decomposition is reached.

\paragraph{Experimental results}
The framework described above was successfully used by Frigerio, Martelli and the
author~\cite{fmp3} to list all manifolds with non-empty geodesic boundary and (possibly) toric cusps,
but no annular cusp, that can be triangulated with up to $4$ tetrahedra.
The data are available online~\cite{www:CP} and include the computation of the volume,
based on results of Ushijima~\cite{akira:vol}.

One of the most striking findings of~\cite{fmp3} is that there are $56$ manifolds whose canonical
Kojima decomposition consists of a single hyperideal regular octahedron (with different combinatorics of
the gluings). The corresponding $56$ manifolds share the same volume and would be extremely difficult
to distinguish from each other using different techniques (such as the invariants of algebraic topology
or those of Turaev and Viro): it is only using hyperbolic geometry in its full power that one can
tell that they are actually distinct. We also mention that this result naturally prompted the problem
of enumerating all the different manifolds that can be obtained gluing the faces of the
octahedron, solved by Heard, Pervova and the author in~\cite{HPP}.

Turning to graphs, the trivalent hyperbolic ones in the most general sense (with parabolic meridians)
were investigated by Heard, Hodgson, Martelli and the author in~\cite{HHMP},
where (with the restriction that each graph should have at least one trivalent vertex)
all those that can be triangulated by $5$ or less tetrahedra
were enumerated and carefully analized. The enumeration and analysis have exploited Heard's excellent
software~\cite{Orb}, which allows to hyperbolize and study manifolds with boundary and orbifolds in
an extremely effective fashion.

No systematic enumeration of hyperbolic orbifolds has been carried out so far, but the theoretical
and computer methods are all in place, as described above, and the author is hoping to
contribute to the topic in the future.

\vspace{1cm}

\noindent Dipartimento di Matematica Applicata\\
Via Filippo Buonarroti, 1C\\
56127 PISA -- Italy\\
{\tt petronio@dm.unipi.it}

\end{document}